\documentclass[11pt,reqno]{amsart}
\usepackage{enumerate}
\usepackage{enumitem}
\usepackage{mathrsfs}
\usepackage{url}
\usepackage{comment}
\usepackage{amsfonts, amssymb, amscd, amsthm, bm, cancel}
\usepackage{amsmath}
\usepackage{hyperref}
\usepackage[margin=1.3in]{geometry}
\hypersetup{
	linktocpage=true,
	colorlinks=true,
	citecolor=magenta,
	linkcolor=blue,
	urlcolor=magenta
}
\providecommand{\arxiv}[1]{\href{https://arxiv.org/abs/#1}{arXiv:#1}}

\newtheorem{proposition}{Proposition}[section]
\newtheorem{theorem}[proposition]{Theorem}
\newtheorem{lemma}[proposition]{Lemma}

\theoremstyle{definition}
\newtheorem{definition}[proposition]{Definition}

\theoremstyle{remark}

\numberwithin{equation}{section}
\allowdisplaybreaks

\begin{document}
\title[Higher regularity of the inverse anisotropic mean curvature flow]{Higher regularity of the inverse anisotropic mean curvature flow}

\author[C. Gao]{Chaoqun Gao}
\author[Y. Wei]{Yong Wei}
\author[R. Zhou]{Rong Zhou}

\address{School of Mathematical Sciences, University of Science and Technology of China, Hefei 230026, P.R. China}
\email{\href{mailto:gaochaoqun@mail.ustc.edu.cn}{gaochaoqun@mail.ustc.edu.cn}}
\email{\href{mailto:yongwei@ustc.edu.cn}{yongwei@ustc.edu.cn}}
\email{\href{mailto:zhourong@mail.ustc.edu.cn}{zhourong@mail.ustc.edu.cn}}

\date{\today}
\subjclass[2020]{53C42, 53E10}
	\keywords{inverse anisotropic mean curvature flow, weak solution, regularity}
	
		\begin{abstract}
		We prove an anisotropic analogue of the higher regularity theorem of Huisken and Ilmanen for inverse mean curvature flow. For an arbitrary smooth Minkowski norm, we first prove a Huisken--Ilmanen type Harnack estimate for smooth closed strictly star-shaped solutions. We then construct global smooth solutions starting from $C^1$ strictly star-shaped hypersurfaces with bounded nonnegative weak anisotropic mean curvature. Combining this construction with the asymptotic theory for weak inverse anisotropic mean curvature flow, we show that weak solutions starting from bounded smooth initial sets become smooth outside a compact set.
	\end{abstract}
	
	\maketitle
	\tableofcontents
	
	
	\section{Introduction}
	\label{section1}
	Inverse mean curvature flow (IMCF) is the expanding geometric flow of hypersurfaces whose normal speed is the reciprocal of the mean curvature. In Euclidean space, a classical solution is a smooth family $X:M^n\times[0,T]\to \mathbb{R}^{n+1}$ ($n\geq 2$) satisfying
	\begin{equation}
		\label{ICMF}
		\frac{\partial }{\partial t}X(x,t)=\frac{1}{H(x,t)}\nu(x,t), 
	\end{equation}
	where $H>0$ is the mean curvature and $\nu$ is the outward unit normal of $M_t=X(M,t)$. Gerhardt \cite{G90} and Urbas \cite{U90} proved that \eqref{ICMF} has a global smooth solution when the initial hypersurface is smooth, star-shaped, and strictly mean convex. After suitable rescaling, the solution converges to a round sphere.
	
	The weak theory of IMCF developed by Huisken and Ilmanen \cite{HI01} allows one to start the flow from arbitrary smooth compact sets and to permit jumps of the level sets. This theory was a central ingredient in their proof of the Riemannian Penrose inequality. The regularity of such weak solutions is a subtle question because the equation degenerates where $H$ approaches zero. In their higher regularity paper \cite{HI08}, Huisken and Ilmanen proved that strict star-shapedness compensates for this degeneracy and yields smooth solutions from weakly mean convex $C^1$ data. We recall the form of their theorem that motivates the present work.
	
	\begin{theorem}[see {\cite[Theorem 2.5]{HI08}}]
		\label{higher-HI}
		Let $X_0:M^n\to\mathbb{R}^{n+1}$ be a closed embedded hypersurface of class $C^1$ with measurable, bounded, nonnegative weak mean curvature $H\geq 0$. Assume that $M_0=X_0(M^n)$ is strictly star-shaped, namely
		\begin{equation*}
			0<R_1\leq \langle X,\nu\rangle\leq R_2.
		\end{equation*}
		Then IMCF has a global smooth solution $X:M^n\times(0,\infty)\to\mathbb{R}^{n+1}$. As $t\downarrow 0$, $M_t=X(M^n,t)$ converges to $M_0$ uniformly in $C^0$. 
	\end{theorem}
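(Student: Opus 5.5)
The plan follows the approximation scheme of Huisken--Ilmanen. Write $M_0$ as a radial graph $X_0=u_0(\theta)\theta$ over $\mathbb{S}^n$. Strict star-shapedness gives $u_0\in C^1$ with $|\nabla u_0|$ bounded in terms of $R_1,R_2$. Mollify $u_0$ on the sphere to get smooth, strictly star-shaped hypersurfaces $M_0^\varepsilon$. These converge to $M_0$ in $C^1$, satisfy uniform bounds $R_1/2\le\langle X,\nu\rangle\le 2R_2$, and have mean curvature bounded above uniformly. The weak mean curvature is $H=\operatorname{div}(\nu)$ in divergence form and is bounded by $\Lambda$, so mollification controls $H^\varepsilon$ from above. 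The lower bound is only approximate: $H^\varepsilon\ge-\delta(\varepsilon)$, with $\delta\to 0$ in a weak/integral sense. To obtain honest positivity I would perturb slightly, for instance by running a short time of mean curvature flow. Since $H\ge0$ in the weak sense, a short MCF makes $H>0$ while preserving star-shapedness and $C^1$ closeness. Then Gerhardt--Urbas gives smooth global IMCF solutions $M_t^\varepsilon$.

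The core is a priori estimates independent of $\varepsilon$ for $t>0$. First, $\langle X,\nu\rangle$ is controlled by the maximum principle. The support-function quantity satisfies $(\partial_t-H^{-2}\Delta)\langle X,\nu\rangle=H^{-2}|A|^2\langle X,\nu\rangle$. Combined with the exact scaling $|X|\sim e^{t/n}$ via spherical barriers, this gives $\langle X,\nu\rangle\ge c\,e^{t/n}$, hence a uniform gradient bound for the radial graph. Second, I need an upper bound on $H$: by the evolution $(\partial_t-H^{-2}\Delta)H=-2H^{-3}|\nabla H|^2-H^{-1}|A|^2\le0$, the maximum principle gives $\sup H$ nonincreasing, so $H\le C$ uniformly. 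Third, and most important, I need a lower bound $H\ge c(t)>0$ for $t>0$, independent of the initial positivity. Here I would prove the Huisken--Ilmanen Harnack-type estimate: consider $\varphi=\frac{1}{H\langle X,\nu\rangle}$, or $tH^{-1}$ weighted by the support function. A maximum principle computation, using that $\langle X,\nu\rangle$ is bounded below, should show $\frac{1}{H}\le C(1+t^{-1/2})$ or similar, with a bound depending only on $R_1,R_2,n$.

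I expect this lower bound on $H$ to be the main obstacle. The evolution of $H^{-1}$ is $\partial_t(H^{-1})=H^{-2}\Delta H^{-1}\cdots$, which degenerates exactly where $H^{-1}$ is large. I would first try the test function $t\,u/(H\langle X,\nu\rangle)$, where $u=\tfrac12|X|^2$ or similar, and cancel bad terms using $\nabla\langle X,\nu\rangle=A(\nabla\,\cdot)$. Once $0<c(t)\le H\le C$ and the gradient is bounded, the equation for $u$ becomes uniformly parabolic on $[t_0,T]$. Krylov--Safonov / Hölder estimates for $H$ (in the spirit of Gerhardt's argument) and Schauder theory then yield $C^{k}$ bounds on compact subsets of $(0,\infty)$, uniform in $\varepsilon$.

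Finally, Arzelà--Ascoli gives a subsequence converging smoothly on $M\times[t_0,T]$ for every $t_0>0$ to a smooth IMCF $M_t$. For $C^0$ convergence as $t\downarrow0$, note that the speed $1/H$ is at most $C t^{-1/2}$ (or similar integrable), uniformly in $\varepsilon$. Hence $|X^\varepsilon(t)-X^\varepsilon_0|\le C\sqrt t$, and combining this with $M_0^\varepsilon\to M_0$ gives $\sup|X(t)-X_0|\to0$.
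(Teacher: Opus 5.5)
Your outline has the same architecture as the paper's proof of the anisotropic version (Theorems \ref{prop-harnack} and \ref{higher-our}, which reduce to this statement for $F=|\cdot|$). That architecture is: approximants with $H>0$ and uniformly bounded $\sup H$, the bound $\langle X,\nu\rangle\geq R_1e^{t/n}$, monotonicity of $\sup H$, a lower bound on $H$ independent of the initial curvature, then Schauder estimates, compactness, and integration of the speed. The decisive step, the lower bound on $H$, is however left as ``should show'', and the test functions you suggest cannot give it.

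Write $\chi=\langle X,\nu\rangle$ and $\mathscr{L}=\partial_t-H^{-2}\Delta$. Then $\mathscr{L}\ln H^{-1}=\frac{|\nabla H|^2}{H^4}+\frac{|A|^2}{H^2}$ and $\mathscr{L}\ln\chi^{-1}=-\frac{|A|^2}{H^2}-\frac{|\nabla\chi|^2}{H^2\chi^2}$. At a critical point of $\frac{1}{H\chi}$ both the curvature terms and the gradient terms cancel exactly, so $\mathscr{L}\ln\frac{t}{H\chi}=\frac1t>0$ there and the maximum principle yields nothing. Without the factor $t$ you only recover that $\max\frac{1}{H\chi}$ is nonincreasing. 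That is a bound in terms of $\inf_{M_0^\varepsilon}H$, which is exactly what degenerates.

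A factor $e^{\gamma|X|^2}$ (or $|X|^2$) does supply a good term, since $\mathscr{L}|X|^2=\frac{4\chi}{H}-\frac{2n}{H^2}$. But at a critical point it also produces the cross term $-\frac{4\gamma}{\chi H^2}A(X^\top,X^\top)$, because $\nabla\chi=A(X^\top)$. This term is not controlled by $\frac{1}{H^2}$, since $|A|/H$ is unbounded on nonconvex hypersurfaces. No surplus $-c\frac{|A|^2}{H^2}$ is left to absorb it.

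The missing idea is to shift the support function. Following Choi--Daskalopoulos, the paper maximizes $tW$ with $W=\varphi^{1-\varepsilon}(w)e^{\gamma|X|^2}/H$, where $w=\chi^{-1}$ and $\varphi(s)=s/(2R_1^{-1}-s)$, so $\varphi(w)=R_1/(2\chi-R_1)$. Huisken--Ilmanen themselves instead use Michael--Simon and a Stampacchia iteration. This $\varphi$ keeps the exact structure $\mathscr{L}\ln\varphi=-\frac{\varphi'w}{\varphi}\frac{|A|^2}{H^2}-\frac{|\nabla\varphi|^2}{H^2\varphi^2}$, but now $\frac{\varphi'w}{\varphi}=\frac{2\chi}{2\chi-R_1}>1$. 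Even with the power lowered to $1-\varepsilon$, the net coefficient of $\frac{|A|^2}{H^2}$ is $-\frac{R_1-2\varepsilon\chi}{2\chi-R_1}\leq0$ once $\varepsilon\leq R_1/(2\sup\chi)$. The slack $\varepsilon$ lets Cauchy--Schwarz bound $|\nabla\ln H|^2$ by $(1-\varepsilon)|\nabla\ln\varphi|^2+\varepsilon^{-1}\gamma^2|\nabla|X|^2|^2$. The first piece cancels against the gradient term of $\mathscr{L}\ln\varphi^{1-\varepsilon}$. The second is absorbed by half of $\frac{2n\gamma}{H^2}$ if $\gamma\sim\varepsilon/\sup|X|^2$. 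Comparing with $\frac1t$ then gives $H^{-2}\lesssim(\gamma t)^{-1}+\chi^2$ at the maximum.

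There is also a smaller gap in the approximation step.
\begin{itemize}
\item Because $H$ is nonlinear in the graph function, a uniform bound on $\sup H^\varepsilon$ after mollification needs the elliptic regularity $M_0\in C^{1,\beta}\cap W^{2,p}$ and a commutator estimate. The divergence form alone does not give it.
\item Along mean curvature flow, $\partial_tH=\Delta H+|A|^2H$ lets $\sup H$ grow by $\exp\int\sup|A|^2$.
\item Strict positivity is not automatic while $H^\varepsilon$ still has a negative part.
\end{itemize}
What is needed is a curvature bound $|A|^2\leq Cs^{-1+\gamma_0}$, integrable and uniform in the approximation parameter, on a uniform time interval. It keeps $\sup H$ bounded. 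It also gives $H\geq0$ for the limiting flow from $M_0$, through Gronwall on $\int H_-^2$. Only then does the strong maximum principle give $H>0$ (Lemma \ref{lem-approximate}).
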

	
	The proof in \cite{HI08} has two main ingredients. The first is a sharp lower bound for the mean curvature of star-shaped solutions, or equivalently a Harnack estimate for the speed. The second is an approximation lemma that replaces a weakly mean convex $C^1$ initial hypersurface by smooth strictly mean convex hypersurfaces. These two ingredients give global smooth solutions from rough data. Combined with the asymptotic roundness of the level sets, they imply that weak IMCF becomes smooth after the first strictly star-shaped time \cite[Theorem 2.7]{HI08}.
	
	Related regularity results have since been obtained in other ambient spaces. Li and the second author \cite{LW17} studied IMCF in Kottler spaces for star-shaped weakly mean convex initial hypersurfaces. Shi and Zhu \cite{SZ21} proved eventual smoothness and star-shapedness for weak IMCF in three-dimensional asymptotically hyperbolic manifolds, and this was extended to higher dimensions $3\leq n\leq 7$ by Harvie \cite{Har24} for weak solutions in hyperbolic space. Harvie and Wang \cite{HarW25} also proved the analogous regularity results for the weak IMCF in asymptotically flat manifolds.
	
	The purpose of this paper is to prove an anisotropic counterpart of the higher regularity theorem of Huisken and Ilmanen \cite{HI08}. Let $F\in C^{\infty}(\mathbb{R}^{n+1}\setminus\{0\})$ be a Minkowski norm, so that $F$ is a norm and $D^2(\frac{1}{2}F^2)$ is positive definite on $\mathbb{R}^{n+1}\setminus\{0\}$. The associated inverse anisotropic mean curvature flow (IAMCF) is
	\begin{equation}
		\label{IAMCF}
		\frac{\partial }{\partial t} X(x,t)=\frac{1}{H_F(x,t)}\nu_F(x,t),
	\end{equation}
	where $H_F$ is the anisotropic mean curvature and $\nu_F$ is the anisotropic normal of $M_t=X(M,t)$. When the initial hypersurface is smooth, star-shaped, and strictly $F$-mean convex, Xia \cite{Xia17} proved the long-time existence of the flow \eqref{IAMCF} and convergence, after rescaling, to a rescaled Wulff shape.
	
	Weak IAMCF can be formulated through the anisotropic level set equation
	\begin{equation}
	\label{eq:WIAMCF}
	\begin{cases}
		\operatorname{div}(DF(Du))=F(Du) & \text{in } \mathbb{R}^{n+1}\setminus\Omega, \\
		u=0 & \text{on } \partial\Omega, \\
		u\to\infty & \text{as } |x|\to\infty.
	\end{cases}
	\end{equation}
	If $E_t=\{x\in\mathbb{R}^{n+1}:u(x)<t\}$ and $M_t=\partial E_t$, then the level sets of a solution to \eqref{eq:WIAMCF} represent weak IAMCF. Della Pietra, Gavitone and Xia \cite{DGX23} established existence and uniqueness of proper weak solutions in this setting.
	
	\begin{theorem}[see {\cite[Theorem 1.1]{DGX23}}]
		\label{thmwiamcf}
		Let $\Omega\subset\mathbb{R}^{n+1}$ be an open bounded set with smooth boundary. Then there exists a unique proper weak solution $u\in C^{0,1}_{\mathrm{loc}}(\mathbb{R}^{n+1}\setminus \Omega)$ of \eqref{eq:WIAMCF} in the sense of Subsection \ref{sec.2-wiamcf} with $u=0$ on $\partial\Omega$. 
Moreover,
	\begin{equation*}
			F(Du(x))\leq \sup_{\partial\Omega}H_F^+, \qquad x\in \mathbb R^{n+1}\setminus\overline\Omega,
		\end{equation*}
		and
		\begin{equation*}
			F(Du(x))\leq H_F^+(x), \qquad x\in\partial\Omega,
		\end{equation*}
		where $H_F^+=\max\{H_F,0\}$ and $H_F$ is the anisotropic mean curvature of $\partial\Omega$.
	\end{theorem}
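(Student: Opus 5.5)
This is the anisotropic version of the Huisken--Ilmanen existence theorem. I will transplant the elliptic regularization argument of \cite{HI01} and use the divergence structure of $\operatorname{div}(DF(Du))$. The plan has four steps: a priori estimates, a limit, uniqueness, and the boundary gradient bounds.

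\textbf{Step 1: regularized problems.} For $\varepsilon>0$ and a large radius $L$, I solve
\begin{equation*}
\operatorname{div}\Big(DF^\varepsilon(Du)\Big)=\sqrt{F(Du)^2+\varepsilon^2}\quad\text{in } B_L\setminus\overline\Omega,\qquad u=0 \text{ on }\partial\Omega,\quad u=L-2 \text{ on }\partial B_L.
\end{equation*}
Here $F^\varepsilon$ is a smooth uniformly elliptic regularization of $F$, for instance $F^\varepsilon(p)=\sqrt{F(p)^2+\varepsilon^2}$. Equivalently, the graph of $u/\varepsilon$ is a translating soliton for the smooth anisotropic flow in one higher dimension. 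Existence follows from the Leray--Schauder method once I have $C^0$ and gradient bounds independent of the homotopy parameter. For $C^0$, I use barriers built from expanding Wulff shapes: $v=(n)\log(F^0(x)/r)$ solves the limit equation exactly, since Wulff shapes expand self-similarly under IAMCF. Here $F^0$ is the dual norm. I perturb $v$ slightly to get sub- and supersolutions of the regularized equation. Interior gradient bounds come from the standard argument applied to $F(Du)$: the speed satisfies a maximum principle on the (anisotropic) level sets. The boundary gradient near $\partial\Omega$ is controlled by $H_F^+ + o(1)$ by comparing with local barriers. Near the outer sphere I use the Wulff barrier.

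\textbf{Step 2: passage to the limit.} The bounds are uniform in $\varepsilon$ and $L$, so Arzel\`a--Ascoli gives a limit $u\in C^{0,1}_{\mathrm{loc}}$, with the stated gradient bounds inherited. To show $u$ is a weak solution, I use the Huisken--Ilmanen variational characterization in the anisotropic form: $u$ minimizes
\begin{equation*}
J_u^K(v)=\int_K F(Dv)+vF(Du)
\end{equation*}
among competitors with $\{v\ne u\}\Subset K$. The $u^\varepsilon$ minimize their regularized functionals, since these are convex. The key point is that minimization passes to locally uniform limits: $\int_K F(Dv)$ is lower semicontinuous, and the nonlinear term $vF(Du)$ is handled by the trick in \cite{HI01}. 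There one compares sublevel-set perimeters using the co-area formula and shows $E_t$ minimizes $|\partial^* E|_F-\int_E F(Du)$ on the outside. Convexity of $F$ and uniform Lipschitz bounds are what make this work.

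\textbf{Step 3: uniqueness and properness.} Uniqueness follows from the anisotropic analogue of the Huisken--Ilmanen comparison argument. Suppose two solutions differ somewhere. Comparing sublevel sets $\{u<t\}$ and $\{w<t\}$, the minimizing property gives $\{u<t\}\subset\{w<t\}$, using strict monotonicity of the energy after a small time shift of $w$. I also need properness, $u\to\infty$. This comes from the Wulff-shape lower barrier, which grows like $n\log F^0(x)$.

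The main obstacle is Step 1's gradient estimate near $\partial\Omega$ when $H_F$ may be negative there. It is also what yields the sharp bound $F(Du)\le H_F^+$. I plan to handle it as in \cite{HI01}, using the smooth anisotropic distance function $d_F$ to $\partial\Omega$ as a barrier, with $u\le d_F\,(H_F^++\delta)$ locally. The anisotropic mean curvature of the level sets of $d_F$ varies continuously, and the regularized operator evaluated on $d_F$ gives exactly the anisotropic mean curvature of those level sets.
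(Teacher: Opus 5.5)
The paper gives no argument for this statement; it is imported from \cite{DGX23}, so your elliptic-regularization outline has to stand on its own. The existence and uniqueness parts are plausible transcriptions of \cite{HI01}. The step you yourself flag as the main obstacle, the boundary gradient estimate, does not work as written, and both gradient bounds in the statement rest on it.

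A barrier $w=(H_F^+(x_0)+\delta)\,d_F$ is only useful through the comparison principle on a region whose whole boundary is controlled. On $\partial\Omega$ you have $w=0=u^\varepsilon$. On the inner parallel surface $\{d_F=\delta_0\}$, and on any lateral boundary if the comparison is meant to be local, you would need $u^\varepsilon\le(H_F^+(x_0)+\delta)\delta_0$. A priori you only have an $O(1)$ bound there, from the Wulff supersolution or from $u^\varepsilon\le L-2$. Invoking a Lipschitz bound at this point is circular, because your global Lipschitz bound comes from the maximum principle for the speed, and that needs the boundary estimate first.

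Moreover, $\lambda d_F$ is a supersolution only where $\lambda$ dominates $H_F^+$ of all nearby parallel surfaces. So a slope tuned to one point fails wherever $H_F$ is larger nearby. Where $H_F(x_0)<0$ your barrier is essentially flat, so comparing it with $u^\varepsilon$ requires knowing in advance that $u^\varepsilon$ is nearly zero near $x_0$. This is not a technicality: there the asserted bound $F(Du)=0$ reflects the initial jump of the weak flow to the strictly outward $F$-minimizing hull. That is a global variational effect which no local barrier can see.

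For the global bound your approach can be repaired by using that the regularized operator weights the normal second derivative by $\varepsilon^2$. For $w=\phi(d_F)$ with $F(Dd_F)=1$ one has $\operatorname{div}(DF^\varepsilon(Dw))=\frac{\phi'}{\sqrt{\phi'^2+\varepsilon^2}}H_F+\frac{\varepsilon^2\phi''}{(\phi'^2+\varepsilon^2)^{3/2}}$, where $H_F$ is the anisotropic mean curvature of the parallel surface. Hence $w=\lambda d_F+Kd_F^2$ is an upper barrier on the whole collar, provided $\lambda-\eta\ge\sup_{\{d_F<\delta_0\}}H_F^+$, $K\delta_0^2$ exceeds the $O(1)$ bound, and $2K\varepsilon^2\le\eta\lambda^3$. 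With a matching lower bound this gives $F(Du^\varepsilon)\le\sup_{\partial\Omega}H_F^++o(1)$ on $\partial\Omega$, and then the interior bound follows by the maximum principle. It still gives nothing pointwise, so the estimate $F(Du(x))\le H_F^+(x)$ on $\partial\Omega$ remains unproved in your plan.

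Two smaller points. First, your sample regularization $F^\varepsilon=\sqrt{F^2+\varepsilon^2}$ is not smooth at $p=0$ unless $F^2$ is a quadratic form, since the Hessian of $F^2$ is $0$-homogeneous. So the lifted norm $\sqrt{F(p)^2+q^2}$ on $\mathbb R^{n+2}$ fails to be a smooth Minkowski norm exactly at critical points of $u^\varepsilon$. Those are precisely the points where you need the evolution equation of the anisotropic mean curvature, which involves $Q$. You should choose a genuinely smooth uniformly convex norm on $\mathbb R^{n+2}$ restricting to $F$. Second, the outer datum $L-2$ must be imposed on a sublevel set $\{v<L\}$ of the Wulff subsolution $v$, as in \cite{HI01}, not on a ball of radius $L$. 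Otherwise the outer boundary gradient is not small, and the maximum principle no longer produces $\sup_{\partial\Omega}H_F^+$.
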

	
	Cabezas-Rivas, Moll and Solera \cite{CMS24} later obtained existence of weak IAMCF under weaker assumptions on both the anisotropy and the initial set. The authors \cite{GWZ26} studied the asymptotic behavior of weak IAMCF and showed that the weak solution is asymptotic to the expanding Wulff shape solution. These results lead to the following question: do weak solutions of IAMCF become smooth after sufficiently large time, as in the isotropic theory of Huisken and Ilmanen \cite{HI08}? We answer this question for an arbitrary smooth Minkowski norm.

	\subsection{Huisken--Ilmanen type Harnack estimate}
	\label{subsec:harnack-intro}
We first prove a Huisken--Ilmanen type Harnack estimate for smooth star-shaped solutions of \eqref{IAMCF}. It holds for an arbitrary smooth Minkowski norm $F$.
	
	For a hypersurface with Euclidean outward normal $\nu$, we write
	\begin{equation*}
		\chi=\frac{\langle X,\nu\rangle}{F(\nu)}
	\end{equation*}
	for its anisotropic support function. Thus strict anisotropic star-shapedness means $\chi>0$.
	
	\begin{theorem}
		\label{prop-harnack}
		Let $F$ be a Minkowski norm, and $X:M^n\times[0,T]\to\mathbb{R}^{n+1}$ be a smooth closed solution to \eqref{IAMCF} with $H_F>0$. Assume that the initial hypersurface $M_0=X(M,0)$ satisfies
		\begin{equation*}
			0< R_1 \leq \chi \leq R_2
		\end{equation*}
		for some constants $R_1,R_2>0$. Then there is a constant $C=C(n,F)>0$ such that
		\begin{equation}
			\label{harnackest}
			\dfrac{1}{H_F} \leq C(n,F)\left( \dfrac{R_2}{R_1} \right)^{\frac{3}{2}}\left( 1 + \dfrac{1}{t^{\frac{1}{2}}} \right)R_2\mathrm{e}^{\frac{t}{n}}
		\end{equation}
		holds everywhere on $M\times(0,T]$.
	\end{theorem}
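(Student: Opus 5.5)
We will adapt the Huisken--Ilmanen maximum principle argument from \cite{HI08} to the anisotropic setting, working with the speed $u=1/H_F$ and the anisotropic support function $\chi=\langle X,\nu\rangle/F(\nu)$. The plan has two steps: first show that $\chi$ remains pinched with the natural exponential growth, and then apply the maximum principle to a suitably weighted product of $u$ and a function of $\chi$ to obtain \eqref{harnackest}.

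\textbf{Step 1: evolution equations.} We first derive, using the anisotropic calculus (the anisotropic Weingarten map $d\nu_F$ and the anisotropic Gauss/Codazzi identities as in Xia \cite{Xia17}), the evolution equations along \eqref{IAMCF}. Writing $L=u^2\,\mathcal{L}$ for the linearized operator, with $\mathcal{L}=\operatorname{div}$-type elliptic operator $a^{ij}\nabla_i\nabla_j$ coming from the anisotropic second fundamental form, we expect
\begin{align*}
\partial_t u &= u^2\mathcal{L}u - 2u\,a^{ij}\nabla_i u\nabla_j u - u\,|h_F|^2 u^2, \\
\partial_t \chi &= u^2\mathcal{L}\chi + u^2|h_F|^2\chi ,
\end{align*}
where $|h_F|^2$ is the squared norm of the anisotropic Weingarten map, so that $|h_F|^2\ge H_F^2/n$. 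Here the anisotropic support function plays the role of $\langle X,\nu\rangle$, and the identity $\mathcal{L}\chi=\langle\nabla_F H_F, X\rangle$-type term $+H_F-|h_F|^2\chi$ should hold by the same computation as in the isotropic case, since $X$ is a position vector and $\nu_F=DF(\nu)$.

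\textbf{Step 2: bounds on $\chi$.} From the maximum principle applied to $\chi e^{-t/n}$, or by comparison with expanding Wulff shapes $\{F^0(x)=re^{t/n}\}$ (which solve \eqref{IAMCF} exactly), we obtain
\[
R_1e^{t/n}\le \chi\le R_2 e^{t/n}
\]
up to constants depending on $F$. The lower bound requires a little care: for the maximum principle we control the reaction term $u^2|h_F|^2\chi\ge \chi/n$. The upper bound follows by containment between Wulff shapes, using that the Wulff shape radii of the enclosed region are comparable to $\chi$ up to $C(F)$.

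\textbf{Step 3: Harnack quantity.} This is the main step. As in \cite{HI08}, we consider
\[
w=\frac{t^{1/2}\,u}{\chi - \tfrac12 R_1e^{t/n}}
\]
or, more simply, $w=u\,\varphi(\chi)$ with $\varphi$ chosen so that the gradient cross terms are absorbed. At an interior maximum of $w$ we have $\nabla u/u=-\nabla\chi/(\chi-\tfrac12R_1e^{t/n})$. Substituting this into the evolution of $w$, the bad gradient term $-2u\,a^{ij}\nabla_iu\nabla_ju$ combines with the $\chi$ terms. The term $u^2|h_F|^2$ gives a good negative contribution comparable to $w^2$, because $u^2|h_F|^2\chi/(\chi-\dots)$ dominates the positive contributions. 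We also use $|\nabla\chi|\le C(F)|h_F|\,|X|$ and $|X|\le C(F)R_2e^{t/n}$. This yields an ODE inequality $\frac{d}{dt}\max w\le \frac{w}{2t}-c\frac{w^2}{R_2\dots}$, which gives $w\le C(R_2/R_1)^{3/2}R_2$ and hence \eqref{harnackest}. The factor $(1+t^{-1/2})$ comes from handling small and large times separately, the large-time part being obtained by restarting at time $t-1$.

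\textbf{Main obstacle.} The expected difficulty is the anisotropic gradient structure. The operator $a^{ij}$ is only uniformly equivalent to the metric, with constants depending on $F$, so the Cauchy--Schwarz absorption of the mixed term $a^{ij}\nabla_iu\nabla_j\chi$ loses constants. The power $3/2$ must arise from optimizing $\varphi$ against these losses. We would first try $\varphi(\chi)=(\chi-\tfrac12R_1e^{t/n})^{-1}$ exactly as in \cite{HI08}, and check that all the $F$-dependent error terms are bounded by $C(n,F)$.
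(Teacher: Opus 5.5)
Your Step 3 has a genuine gap, and it is structural rather than a matter of constants.

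With $u=1/H_F$ and $\mathscr{L}=\partial_t-H_F^{-2}\bigl(\Delta+g^{ik}A_{pik}\nabla^p\bigr)$, the correct equations are $\mathscr{L}u=\frac{|h|_g^2}{H_F^2}u$ and $\mathscr{L}\chi=\frac{|h|_g^2}{H_F^2}\chi$. So the reaction term for $u$ is positive, not $-u^3|h_F|^2$, and there is no extra $-2u|\nabla u|^2$ term. Hence $u$ and $\chi$ solve the same linear equation. For any quantity $u\,\Phi(\chi,t)$, at a spatial critical point, $\mathscr{L}\ln(u\Phi)$ equals $\Phi_t/\Phi$ plus $(\chi,t)$-dependent multiples of $|h|_g^2/H_F^2$ and of $|\nabla\chi|_g^2/H_F^2\le C(F)|X|^2|h|_g^2/H_F^2$. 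None of these terms grows with $u$. For your specific choice one gets exactly
\[
\mathscr{L}\ln w=\frac{1}{2t}+\frac{R_1e^{t/n}}{2\bigl(\chi-\frac12R_1e^{t/n}\bigr)}\Bigl(\frac1n-\frac{|h|_g^2}{H_F^2}\Bigr),
\]
which is just $\frac1{2t}>0$ at anisotropically umbilic points. So the good term ``comparable to $w^2$'' does not exist, since $u^2|h|_g^2\geq\frac1n$ is scale invariant. The ODE inequality you state cannot be derived. At best the maximum principle yields $\frac{1}{H_F\chi}\le\max_{M_0}\frac{1}{H_F\chi}$, which depends on the initial curvature --- precisely what the theorem must avoid. (Huisken--Ilmanen did not argue this way either; they used the Michael--Simon Sobolev inequality and a Stampacchia iteration.)

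The missing ingredient is a position-dependent weight whose Laplacian produces a good term of order $1/H_F^2$. Following Choi--Daskalopoulos, the paper uses $W=\varphi^{1-\varepsilon}(\chi^{-1})\,e^{\gamma G(\nu_F)(X,X)}/H_F$ with $\varphi(s)=s/(2R_1^{-1}-s)$. The key identity is that $\mathscr{L}\,G(\nu_F)(X,X)$ equals $\frac{4\chi}{H_F}-\frac{2n}{H_F^2}$ plus $Q$- and $T$-terms. After absorbing $\frac{4\gamma\chi}{H_F}$, the maximum of $tW$ satisfies $0\le\gamma\bigl(-\frac{n}{2H_F^2}+\frac8nR_2^2e^{2T/n}\bigr)+\frac1{t_0}$, and this is what bounds $1/H_F$.

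The real anisotropic difficulty is then not the Cauchy--Schwarz loss you anticipate, since everything is done in the metric $g$. It is the error terms of the form $h*h*Q*Q*X*X$, $h*h*T*X*X$, $h*Q*X$, and $\varepsilon^{-1}\gamma^2|\nabla G(\nu_F)(X,X)|_g^2/H_F^2$. These must be absorbed by the good term $-\frac{R_1/\chi-2\varepsilon}{2-R_1/\chi}\frac{|h|_g^2}{H_F^2}$ supplied by $\varphi^{1-\varepsilon}$. This forces $\varepsilon\sim R_1/(R_2e^{T/n})$ and $\gamma=n\varepsilon/(C_4R_2^2e^{2T/n})$, and the resulting factors $\varepsilon^{-1}$ and $\varphi^{-(2-2\varepsilon)}$ produce the power $\frac32$.

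Your Step 2 and the restart at time $t-1$ are fine.
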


	A key point of the estimate \eqref{harnackest} is that it  depends only on $R_1$, $R_2$, the dimension, and the fixed anisotropy, and does not depend on the initial anisotropic mean curvature. 
 
	The proof is based on a pointwise maximum principle argument. Inspired by the alternative proof of the Huisken--Ilmanen estimate given by Choi and Daskalopoulos \cite[Theorem A.5]{CD21}, we consider the auxiliary quantity
	\begin{equation*}
		W = \frac{\varphi^{1-\varepsilon}(w)\,\mathrm{e}^{\gamma G(\nu_F)(X,X)}}{H_F},
	\end{equation*}
	where $w=\chi^{-1}$, $G(\nu_F)(X,X)$ is the anisotropic analogue of $|X|^2$, and $\varphi$, $\varepsilon$, and $\gamma$ are chosen appropriately. Applying the maximum principle to $tW$ on $M\times[0,T]$ gives a bound for the speed $1/H_F$. This differs from the original proof of Huisken and Ilmanen \cite{HI08}, which combines the evolution of the support function and the mean curvature with the Michael--Simon Sobolev inequality and a Stampacchia iteration. In the anisotropic setting, the evolution equations contain extra terms involving the tensors $Q$ and $T$ associated with the dual norm, so that the Huisken--Ilmanen argument based on Sobolev inequality and iteration does not transfer directly. The maximum principle method allows these terms to be absorbed into constants depending on $F$, without imposing any additional assumptions on the anisotropy.

	\subsection{Higher regularity of IAMCF}
	\label{subsec:higher-regularity-intro}
	The Harnack estimate gives a lower bound for $H_F$ along smooth star-shaped solutions, but to start the flow from rough weakly $F$-mean convex data one also needs an approximation theorem. We obtain this approximation by running anisotropic mean curvature flow for a short time. 
    

	The smooth existence theorem below is the anisotropic counterpart of Theorem \ref{higher-HI}.
	
	\begin{theorem}
	\label{higher-our}
		Let $F$ be a Minkowski norm. Let $M_0=X_0(M)$ be a closed embedded hypersurface of class $C^1$ with measurable, bounded, nonnegative weak anisotropic mean curvature $H_F\geq 0$. 
		Assume that $X_0$ is strictly star-shaped in the anisotropic sense that
		\begin{equation*}
			0<R_1\leq \chi=\frac{\langle X,\nu\rangle}{F(\nu)}\leq R_2
		\end{equation*}
		for some constants $R_1,R_2>0$. Then IAMCF \eqref{IAMCF} has a global smooth solution $X:M^n\times(0,\infty)\to\mathbb{R}^{n+1}$. As $t\downarrow 0$, $M_t=X(M^n,t)$ converges to $M_0$ uniformly in $C^0$.
	\end{theorem}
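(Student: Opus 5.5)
We prove Theorem \ref{higher-our} by approximating $M_0$ with smooth strictly $F$-mean convex, strictly anisotropically star-shaped hypersurfaces, running the smooth flow of Xia \cite{Xia17} from each, and passing to the limit. The compactness comes from Theorem \ref{prop-harnack} together with standard parabolic regularity.

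\emph{Step 1 (approximation).} Run anisotropic mean curvature flow $\partial_t Y=-H_F\nu_F$ from $M_0$ for a short time $s\in(0,s_0]$. Because $M_0$ is only $C^1$, we cannot invoke classical theory directly. Instead we write $M_0$ as an anisotropic radial graph over the Wulff shape; this is possible because $\chi\ge R_1>0$. We then first mollify the radial function, obtaining smooth hypersurfaces $M_0^\delta$. Two facts make this workable:
\begin{itemize}
\item $C^1$-closeness keeps $R_1/2\le\chi\le 2R_2$ on $M_0^\delta$.
\item The weak curvature bound gives $\sup|H_F|\le C$ in an averaged sense.
\end{itemize}
Star-shapedness is preserved under AMCF for a short time by a maximum principle applied to $\chi$ (equivalently, to $u=\langle X,\nu\rangle$ weighted appropriately). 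So for short time the flow is a quasilinear parabolic equation for the radial function with uniform gradient bounds. This yields smooth flows $M^\delta_s$ on a uniform time interval, converging as $\delta\to0$ to a flow $M_s$ that is smooth for $s>0$ and $C^0$-close to $M_0$ as $s\downarrow0$.

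We then need $H_F>0$ on $M_s$ for $s>0$. The quantity $H_F$ satisfies $\partial_s H_F=\mathcal L H_F+|h_F|^2H_F$, an anisotropic Jacobi-type equation with linear zero-order term. The weak condition $H_F\ge0$ passes to $M_s$ in the distributional sense, and the strong maximum principle then gives $H_F>0$, unless $H_F\equiv0$. That case is impossible for a closed hypersurface. Making this rigorous is the \textbf{main obstacle}. I would proceed as follows: express $H_F$ on the mollified graphs as the divergence-form operator applied to the radial function, so that mollification nearly preserves $H_F\ge -o(1)$. Then use the evolution inequality to show $\min H_F$ stays $\ge -o(1)e^{Cs}$, and pass to the limit.

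\emph{Step 2 (smooth IAMCF from approximants).} For each small $s$, the hypersurface $M_s$ is smooth, strictly $F$-mean convex and strictly star-shaped with $R_1/2\le\chi\le2R_2$. Xia's theorem \cite{Xia17} therefore gives a global smooth IAMCF $X^s$ starting from $M_s$. Theorem \ref{prop-harnack} bounds $1/H_F$ on $[\tau,T]$ uniformly in $s$, for any $\tau>0$. An upper bound on $H_F$ follows from the maximum principle for the evolution of $H_F$: under IAMCF, $\partial_t H_F\le \mathcal L H_F-\frac{|h_F|^2}{H_F}$, so $\max H_F$ is nonincreasing. Since we need bounds uniform in $s$, we use instead that $H_F$ of $M_s$ is bounded by the weak bound $\sup H_F$ of $M_0$; this is again preserved by AMCF through the maximum principle applied to the mollified data. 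Star-shapedness gives uniform $C^1$ bounds on the radial function, and the resulting equation is uniformly parabolic, concave in the anisotropic setting after writing speed $-1/H_F$. Krylov--Safonov and Schauder estimates then give $C^{k}$ bounds on $[\tau,T]$ uniformly in $s$.

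\emph{Step 3 (limit).} Take a diagonal Arzel\`a--Ascoli limit $s\to0$, which produces a smooth solution on $(0,\infty)$. For $C^0$ convergence as $t\downarrow0$, the speed bound $1/H_F\le C(1+t^{-1/2})$ integrates to displacement $\le C(t+t^{1/2})$, uniformly in $s$. Combined with $M_s\to M_0$ in $C^0$, this gives uniform $C^0$ convergence $M_t\to M_0$.
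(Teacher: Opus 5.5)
Your architecture is the paper's: short-time AMCF smoothing of mollified data, Xia's flow from the approximants, Theorem \ref{prop-harnack} for the lower bound on $H_F$, compactness, and integrating the speed bound for the $C^0$ trace. Steps 2--3 are fine once the approximants have uniform bounds. The gap is in Step 1, and it is more than technical.

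Along AMCF, $\partial_s H_F=\Delta H_F+|h|_g^2H_F+g^{ik}A_{pik}\nabla^pH_F$, so the reaction term has the wrong sign for an upper bound. The maximum principle does \emph{not} preserve $\sup H_F$: on a shrinking Wulff shape $H_F=n/r(s)$ increases. So your Step 2 claim that the bound on $M_0$ is ``preserved by AMCF'' is false. Both the upper bound $H_F\le C\Lambda$ and the lower bound $\min H_F\ge -o(1)$ can only be propagated with the Gronwall factor $\exp\bigl(\int_0^s\|h\|_{L^\infty}^2\,d\sigma\bigr)$. Your ``$e^{Cs}$'' tacitly assumes $\|h\|_{L^\infty}^2\le C$ uniformly in $\delta$, and that is false. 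Mollified $C^1$ surfaces have curvature as large as $o(1)\delta^{-1}$, and from $C^1$ data one only expects $|h|=o(s^{-1/2})$, whose square need not be integrable at $s=0$. The uniform upper bound is genuinely needed in Step 2, since IAMCF only makes $\max H_F$ nonincreasing and does not create a bound.

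The same circularity undermines your uniform existence time and the bound $R_1/2\le\chi\le 2R_2$ on $M_s$. The evolution of $\chi$ under AMCF has a zero-order term in $H_F$. Also, $\chi$ involves the normal, so you need $C^1$, not $C^0$, closeness of $M_s$ to $M_0$ for Theorem \ref{prop-harnack} to apply uniformly.

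The missing ingredient is to use the bounded weak curvature quantitatively before smoothing. Locally $M_0$ is a graph with $\operatorname{div}(DF(Du_0-\phi^0))=H_F\in L^\infty$ weakly. Difference quotients and interior $W^{2,p}$ estimates then give $u_0\in C^{1,\beta}\cap W^{2,p}$ for all $\beta<1$ and $p<\infty$.

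With a common $C^{1,\beta_0}$ bound on the mollified data, the paper runs AMCF on a fixed interval. It uses a stopping-time argument, the barriers $\ell\pm C(|y-y_0|^2+\kappa s)^{(1+\beta_0)/2}$, H\"older-gradient estimates, interpolation and scaled Schauder estimates. This yields $C^1$ convergence to $M_0$ and $\|h\|_{L^\infty}\le Cs^{-(1-\gamma_0)/2}$ uniformly in the approximation, so $\int_0^{\varepsilon_0}\|h\|_{L^\infty}^2\,ds$ is uniformly finite.

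Only then does Gronwall close the argument. It is applied to $\int H_{F,-}^2\,d\mu_F$ and to $\|H_{F,+}\|_{L^p}$ with a $p$-independent constant. Combined with $L^p$ convergence of the initial curvatures, this gives $0\le H_F\le C\Lambda$ on the limit $M_s$, and the strong maximum principle then gives $H_F>0$.

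Your pointwise commutator idea for mollification does work once $u_0\in C^{1,\beta}\cap W^{2,p}$ with $p>n/\beta$, since the error is $O(\delta^{\beta-n/p})$. For merely $C^1$ data it is uncontrolled, and it would still need the integrable curvature bound afterwards.
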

	
	The proof follows the two-step scheme of Huisken and Ilmanen \cite{HI08}, but both steps require new anisotropic ideas. The first step is the Harnack estimate of Subsection \ref{subsec:harnack-intro}. Instead of the global integral argument in \cite{HI08}, we use the auxiliary-function maximum principle described above. This pointwise method is well suited to the anisotropic evolution equations, whose lower-order terms involve the tensors $Q$ and $T$ coming from derivatives of the dual norm.
	
	The second step is the approximation of the initial hypersurface. We first use elliptic regularity and radial mollification to obtain smooth approximants with a common $C^{1,\beta}$ bound. We then run anisotropic mean curvature flow for a short time. A stopping-time argument keeps the local graphical equations uniformly parabolic, while initial-time barriers, H\"older-gradient estimates, interpolation and parabolic Schauder estimates yield uniform $C^1$ convergence and an integrable curvature bound. The strong maximum principle gives strict $F$-mean convexity of the limiting approximating hypersurfaces. Starting the smooth IAMCF from these approximations and using the Harnack estimate yields uniform estimates on compact time intervals away from $t=0$, and a compactness argument gives the desired smooth solution.
	
	Combining Theorem \ref{higher-our} with the asymptotic behavior of the weak IAMCF yields our main regularity theorem for weak solutions starting from bounded smooth sets, which provides an anisotropic counterpart of Huisken--Ilmanen's \cite[Theorem 2.7]{HI08}.
	
	\begin{theorem}
		\label{thm-main}
		Let $F$ be a Minkowski norm. Let $E_0=\Omega\subset\mathbb{R}^{n+1}$ be a bounded open set with smooth boundary, and let $u$ be the proper weak solution of IAMCF \eqref{eq:WIAMCF} starting from $\Omega$. Set $E_t=\{x\in\mathbb{R}^{n+1}:u(x)<t\}$ and $M_t^n=\partial E_t$. Then there exists $t_0\geq 0$ such that $M_{t_0}^n$ is strictly star-shaped and of class $C^1$. Moreover, $M_t^n$ is smooth for every $t>t_0$. Equivalently, with $K=\overline{E_{t_0}}$, all level hypersurfaces are smooth in $\mathbb{R}^{n+1}\setminus K$.
	\end{theorem}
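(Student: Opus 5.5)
We follow the scheme of Huisken--Ilmanen \cite[Theorem 2.7]{HI08}: use the asymptotic theory to find a time $t_0$ at which the level set is $C^1$ and strictly star-shaped, apply Theorem \ref{higher-our} from that time on, and identify the resulting smooth flow with the weak solution by uniqueness.

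\textbf{Step 1: choosing $t_0$.} We invoke the asymptotic result of \cite{GWZ26}. After rescaling by $\mathrm{e}^{-t/n}$, the sets $E_t$ converge to a Wulff shape $\mathcal W$ (centered at some point, which we translate to the origin). We need this convergence in $C^1$, not just in Hausdorff distance, to control $\chi$. Since $F(Du)$ is bounded by Theorem \ref{thmwiamcf} and the weak mean curvature of $M_t$ is $F(Du)\le \sup_{\partial\Omega}H_F^+$, the rescaled level sets have bounded weak anisotropic mean curvature, and this bound scales down like $\mathrm{e}^{-t/n}$. Combining this bound with Hausdorff closeness to $\mathcal W$ and with the minimizing-hull property of $E_t$ for $t>0$ (outward minimizing for the anisotropic perimeter), Allard-type regularity for anisotropic integrands (the integrand is smooth and elliptic) gives $C^{1,\beta}$ closeness to $\partial\mathcal W$ for large $t$. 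We expect this to be the main obstacle. If \cite{GWZ26} already provides $C^{1,\beta}$ convergence of the rescaled hypersurfaces, we quote it directly. Otherwise we argue locally: a hypersurface that is Hausdorff-close to the smooth strictly convex $\partial\mathcal W$ and has small mean curvature and perimeter close to that of the Wulff shape is a $C^{1,\beta}$ graph over it. Since $\chi$ is uniformly positive on $\partial\mathcal W$, we can then fix $t_0$ with $M_{t_0}$ of class $C^1$ and $0<R_1\le \chi\le R_2$. We also need a nonnegative bounded weak $H_F$. Nonnegativity follows from outward minimization, and boundedness from $F(Du)\le C$. Exceptional jump times are avoided by choosing $t_0$ with $M_{t_0}=\partial\{u>t_0\}$, which holds for almost every $t_0$.

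\textbf{Step 2: smooth continuation.} Theorem \ref{higher-our} applied to $M_{t_0}$ gives a smooth IAMCF $\tilde M_t$, $t>t_0$, with $\tilde M_t\to M_{t_0}$ in $C^0$. By Theorem \ref{prop-harnack}, $H_F>0$ on compact subintervals of $(t_0,\infty)$, and star-shapedness is preserved. So $\tilde M_t$ foliates $\mathbb R^{n+1}\setminus \overline{E_{t_0}}$, with $v$ the time function of the foliation.

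\textbf{Step 3: identification.} We check that $v$ is a weak solution of \eqref{eq:WIAMCF} outside $E_{t_0}$ with initial set $E_{t_0}$. Smooth flows with $H_F>0$ are weak solutions via the calibration argument: $DF(Dv)$ calibrates the level sets. We then use uniqueness of proper weak solutions, in the form of the semigroup property of \cite{DGX23}: $u-t_0$ restricted to $\mathbb R^{n+1}\setminus E_{t_0}$ is the unique weak solution from $E_{t_0}$. The cited uniqueness assumes a smooth initial boundary, so we either approximate $E_{t_0}$ from inside and outside by the smooth approximants of Theorem \ref{higher-our} and use comparison, or redo the comparison principle for $C^1$ initial data. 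This gives $u=v$, so $M_t=\tilde M_t$ is smooth for all $t>t_0$. Equivalently, with $K=\overline{E_{t_0}}$, all level sets in $\mathbb R^{n+1}\setminus K$ are smooth.
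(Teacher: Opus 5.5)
Your three-step architecture is the paper's. Steps 2 and 3 are essentially its argument: Proposition \ref{prop:smooth-continuation-weak} followed by uniqueness. Lemma \ref{lem-weakcom} as stated needs no smoothness of the initial set, so your detour through approximating $E_{t_0}$ is unnecessary.

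The genuine gap is in Step 1, at the step you yourself flag as the main obstacle. The bound $F(Du)\le\sup_{\partial\Omega}H_F^+$ from Theorem \ref{thmwiamcf} is uniform in $t$. Under the shrinking $X\mapsto e^{-t/n}X$, curvatures are multiplied by $e^{t/n}$, not by $e^{-t/n}$. Explicitly, with $s=e^{t/n}$ and $u_s(x)=u(sx)-n\log s$,
\[
F(Du_s)(x)=sF(Du)(sx)\le s\sup_{\partial\Omega}H_F^+ .
\]
So the rescaled sets $s^{-1}E_{n\log s}$ are only almost-minimizers of $P_F$ with constant $\Lambda_s\sim s\to\infty$; equivalently, their weak anisotropic mean curvature is bounded only by a multiple of $s$. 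Any $\varepsilon$-regularity theorem needs, on the balls $B_r$ where it is applied, both small excess and $\Lambda_s r$ small. The second condition forces $r\lesssim 1/s$. Hausdorff and $L^1$ closeness to $\rho_\infty\partial\mathcal W$ only controls the excess on balls of fixed size. So nothing in your argument rules out small-scale non-graphical behaviour. The $C^{1,\beta}$ convergence, and hence the choice of $t_0$, is not established.

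The missing ingredient is the decay estimate of Lemma \ref{lem:weak-gradient}, $F(Du)(x)\le C/F^\circ(x)$, the anisotropic analogue of Huisken--Ilmanen's bound $|Du|\le C/|x|$. It gives $F(Du_s)\le C/F^\circ(x)$ uniformly in $s$. You then need the two-sided $J_{F,u}$-minimality of $E_t$, not just the one-sided outward-minimizing property you invoke. It yields
\[
P_F(E_s;B_r(x))\le P_F(G;B_r(x))+\Lambda_A|E_s\triangle G|
\]
on a fixed annulus, with $\Lambda_A$ independent of $s$. With this uniform constant, the paper's argument goes through:
\begin{enumerate}
\item Compactness of almost-minimizers gives convergence of the perimeter measures to the multiplicity-one measure of $\rho_\infty\partial\mathcal W$. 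This is what justifies your unproved claim that the perimeter is close to that of the Wulff shape.
\item Hence the excess is small on fixed small balls, where $\Lambda_A r$ is also small.
\item Duzaar--Steffen small-excess regularity then gives uniform $C^{1,\alpha}$ local graphs, so $C^{1,\beta}$ convergence and strict star-shapedness follow.
\end{enumerate}

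In rescaled coordinates the curvature is bounded, not small: the limit has $H_F=n/\rho_\infty$. Smallness enters only through $\Lambda_A r$. Working with almost-minimizing sets of finite perimeter also avoids relying on an Allard-type theorem for anisotropic varifolds, where the lack of a monotonicity formula makes the available results require extra hypotheses. For the hypotheses on $M_{t_0}$ itself, your uniform bound (with $H_F=F(Du)$ for a.e. $t$) does suffice. The decay estimate is needed precisely for the blow-down.
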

	
	The proof uses the asymptotic behavior of weak IAMCF from our previous work \cite{GWZ26}. Together with the weak gradient estimate, compactness for anisotropic almost minimizers, and small-excess regularity, it gives $C^{1,\beta}$ convergence of the rescaled level sets to a Wulff shape. Hence some large level set is strictly star-shaped and of class $C^1$, with nonnegative bounded weak anisotropic mean curvature. Theorem \ref{higher-our} then gives a smooth classical continuation. Uniqueness in the weak formulation identifies this continuation with the original weak flow.

   The paper is organized as follows. Section \ref{sec2} recalls the anisotropic geometry and the weak formulation of IAMCF. Section \ref{sec3} proves the Harnack estimate. Section \ref{sec4} constructs global smooth solutions from $C^1$ weakly $F$-mean convex, strictly star-shaped initial hypersurfaces. Section \ref{sec5} proves eventual smoothness for weak solutions.

\section{Preliminaries}
	\label{sec2}
	In this section, we collect some preliminaries on anisotropic geometry and the weak formulation of inverse anisotropic mean curvature flow.
	
	\subsection{Minkowski norm and Wulff shape}\label{sec.2-1}
	We recall the definitions of the Minkowski norm and the Wulff shape.
	
	A function $F\in C^{\infty}\left(\mathbb{R}^{n+1}\backslash\{0\}\right)$ is called a \textit{Minkowski norm} if
    \begin{enumerate}
        \item $F$ is a norm in $\mathbb{R}^{n+1}$, i.e., $F$ is convex, even, $1$-homogeneous with $F(\xi)\geq 0$ for all $\xi\in \mathbb{R}^{n+1}$, and $F(\xi)=0$ if and only if $\xi=0$.
        \item $F$ satisfies a uniformly elliptic condition: $D^2\left( \frac{1}{2}F^2 \right)$ is positive definite in $\mathbb{R}^{n+1}\backslash \{0\}$.
    \end{enumerate}
    The dual norm of a Minkowski norm  $F$ is defined as
	\begin{equation}
		F^{\circ}(x):= \sup\limits_{\xi\neq 0} \dfrac{\langle x,\xi \rangle}{F(\xi)},\ \ x\in\mathbb{R}^{n+1},
		\label{F0}
	\end{equation}
	and $F^{\circ}$ is also a Minkowski norm in $C^{\infty}\left( \mathbb{R}^{n+1}\backslash \{0\} \right)$. From the definition \eqref{F0}, we have the anisotropic Cauchy--Schwarz inequality:
	\begin{equation}
		\langle x,\xi \rangle\leq F^{\circ}(x)F(\xi),\ \ \forall\ x,\ \xi\in\mathbb{R}^{n+1}
		\label{anisocauchy}
	\end{equation}
	If $x\neq0$ and $\xi\neq0$, equality holds if and only if $x=F^{\circ}(x)DF(\xi)$ and $\xi=F(\xi)DF^{\circ}(x)$. The cases in which one vector vanishes are immediate.
	
	The following properties of $F$ and $F^{\circ}$ hold true, see e.g. \cite{BC18,CS09,Xia12}.
	\begin{lemma}
		For any $x,\xi\in\mathbb{R}^{n+1}\backslash \{0\}$, we have
		\begin{align}
			\langle DF(\xi),\xi \rangle=F(\xi),&\ \ \langle DF^{\circ}(x),x \rangle=F^{\circ}(x). \label{fprop1}\\
			F\left( DF^{\circ} (x)\right)=1,&\ \ F^{\circ} \left( DF(\xi) \right) = 1, \\
			F^{\circ}(x) DF\left( DF^{\circ}(x) \right) = x,&\ \  F(\xi) DF^{\circ} \left( DF(\xi) \right) = \xi.\label{fprop3}
		\end{align}
		Here $D$ denotes the standard gradient operator in $\mathbb{R}^{n+1}$.
	\end{lemma}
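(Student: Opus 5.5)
These identities are direct consequences of the homogeneity of $F$ and $F^{\circ}$ and of the equality case of the anisotropic Cauchy--Schwarz inequality \eqref{anisocauchy}. I would prove them in the order listed, each one feeding into the next.

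\emph{First line \eqref{fprop1}.} Since $F$ is $1$-homogeneous, $F(s\xi)=sF(\xi)$ for $s>0$. Differentiating in $s$ at $s=1$ gives Euler's identity $\langle DF(\xi),\xi\rangle=F(\xi)$. The dual norm $F^{\circ}$ is also a $1$-homogeneous smooth Minkowski norm, so the same argument gives $\langle DF^{\circ}(x),x\rangle=F^{\circ}(x)$.

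\emph{Second line.} For fixed $\xi\neq0$, convexity and homogeneity of $F$ give the supporting inequality
\[
F(\eta)\ge F(\xi)+\langle DF(\xi),\eta-\xi\rangle=\langle DF(\xi),\eta\rangle \qquad\text{for all } \eta .
\]
Hence $\langle DF(\xi),\eta\rangle/F(\eta)\le 1$ for every $\eta\neq0$, with equality at $\eta=\xi$. By the definition \eqref{F0}, this means $F^{\circ}(DF(\xi))=1$. The identity $F(DF^{\circ}(x))=1$ follows symmetrically, once we know $(F^{\circ})^{\circ}=F$. This duality is the standard bipolar statement for a convex, even, $1$-homogeneous function; it follows from Hahn--Banach, or from the fact that the unit ball of $F$ is a closed convex set equal to its double polar.

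\emph{Third line \eqref{fprop3}.} This is the step needing the most care, since it requires the equality characterization. Fix $x\neq0$. The supremum in \eqref{F0} is attained at some $\xi_0$ on the compact set $\{F=1\}$. The Lagrange condition for maximizing $\langle x,\xi\rangle$ subject to $F(\xi)=1$ gives $x=\lambda DF(\xi_0)$. Pairing with $\xi_0$ and using \eqref{fprop1} yields $\lambda=F^{\circ}(x)$.

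On the other hand, $F^{\circ}(y)\ge\langle y,\xi_0\rangle$ for all $y$, with equality at $y=x$. So the smooth function $y\mapsto F^{\circ}(y)-\langle y,\xi_0\rangle$ attains its minimum at $x$, which forces $DF^{\circ}(x)=\xi_0$.

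Combining the two, we get $x=F^{\circ}(x)DF(DF^{\circ}(x))$. Exchanging the roles of $F$ and $F^{\circ}$, using the duality noted above, gives $\xi=F(\xi)DF^{\circ}(DF(\xi))$.
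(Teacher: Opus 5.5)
Your argument is correct. The paper gives no proof of this lemma; it states the identities as standard facts and points to the literature, so your proposal supplies a self-contained derivation that the paper lacks.

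Your route uses only three ingredients: Euler's identity, the supporting-hyperplane inequality $F(\eta)\ge\langle DF(\xi),\eta\rangle$, and the vanishing of the gradient at a minimum. The Lagrange step is legitimate, because $\langle DF(\xi_0),\xi_0\rangle=F(\xi_0)=1$ forces $DF(\xi_0)\neq0$. Alternatively, you can avoid multipliers altogether: by \eqref{anisocauchy}, $\xi\mapsto F^{\circ}(x)F(\xi)-\langle x,\xi\rangle$ is minimized at $\xi_0$, which gives $x=F^{\circ}(x)DF(\xi_0)$ directly.

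One simplification is available: you do not need the bipolar identity $(F^{\circ})^{\circ}=F$.
\begin{itemize}
\item Your third step already shows $DF^{\circ}(x)=\xi_0$ with $F(\xi_0)=1$, so $F(DF^{\circ}(x))=1$ follows at once.
\item For the last identity, run the same minimization argument at $y=DF(\xi)$ with $\xi_0=\xi/F(\xi)$. This $\xi_0$ attains the supremum in \eqref{F0}, since $\langle DF(\xi),\xi/F(\xi)\rangle=1=F^{\circ}(DF(\xi))$ by your second step. The argument then yields $DF^{\circ}(DF(\xi))=\xi/F(\xi)$.
\end{itemize}
This keeps the proof within the properties the paper lists before the lemma: convexity, homogeneity, and smoothness of $F$ and $F^{\circ}$ away from the origin. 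It also essentially recovers the equality case of \eqref{anisocauchy}, which the paper likewise states without proof.
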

	
	Given a Minkowski norm $F$ in $\mathbb{R}^{n+1}$, the associated Wulff shape is defined as
	\begin{equation*}
		\mathcal{W}:=\left\lbrace x\in\mathbb{R}^{n+1}:F^\circ(x)<1 \right\rbrace.
	\end{equation*}
	For $x_0\in \mathbb{R}^{n+1}$ and $r>0$, we denote by $\mathcal{W}_r(x_0)=r\mathcal{W}+x_0$ the scaled and translated Wulff shape, which satisfies
	\begin{equation*}
		\mathcal{W}_r(x_0)=\left\lbrace x\in\mathbb{R}^{n+1}:F^\circ(x-x_0)<r \right\rbrace.
	\end{equation*}
	When $F$ is the Euclidean norm, $\mathcal{W}_r(x_0)$ coincides with the Euclidean ball $\mathbb{B}_r(x_0)$ of radius $r$ centered at $x_0$.

	\subsection{Anisotropic curvatures for hypersurfaces}
	We recall the anisotropic geometry of hypersurfaces. More details can be found in \cite{Andrews01}, \cite{Xia12}, and \cite{Xia17}.
	
	The Riemannian metric $G$ with respect to $F^{\circ}$ on $\mathbb{R}^{n+1}$ is defined by
	\begin{equation*}
		G(x)(V,W):=\sum\limits_{\alpha,\beta=1}^{n+1} \dfrac{\partial^2 \left( \frac{1}{2}\left(F^{\circ}\right)^2(x) \right)}{\partial x^{\alpha}\partial x^{\beta}} V^{\alpha}W^{\beta},\ \ \forall\ x\in\mathbb{R}^{n+1}\backslash\{0\},\ V,W\in T_{x}\mathbb{R}^{n+1}.
	\end{equation*}
	Since the third and fourth derivatives of $F^{\circ}$ do not generally vanish, for all $x\in\mathbb{R}^{n+1}\backslash\{0\}$ and $U,V,W,Z\in T_{x}\mathbb{R}^{n+1}$, we denote
	\begin{equation*}
		Q(x)(U,V,W):= \sum\limits_{\alpha,\beta,\gamma=1}^{n+1} \dfrac{\partial^3 \left( \frac{1}{2}\left(F^{\circ}\right)^2(x) \right)}{\partial x^{\alpha}\partial x^{\beta} \partial x^{\gamma}} U^{\alpha}V^{\beta}W^{\gamma},
	\end{equation*}
	and
	\begin{equation*}
		T(x)(U,V,W,Z) := \sum\limits_{\alpha,\beta,\gamma,\delta=1}^{n+1}\dfrac{\partial^4\left( \frac{1}{2}\left( F^{\circ} \right)^2(x) \right)}{\partial x^{\alpha}\partial x^{\beta}\partial x^{\gamma}\partial x^{\delta}}U^{\alpha}V^{\beta}W^{\gamma}Z^{\delta}.
	\end{equation*}
	
	Let $X:M\to\mathbb{R}^{n+1}$ be a hypersurface, and let $\nu$ denote its unit outward normal. The anisotropic Gauss map  is a map $\nu_F:M \to \partial\mathcal{W}$ defined by
	\begin{equation*}
    \nu_F(p):=DF(\nu(p))=F(\nu(p))\nu(p)+\nabla_{\mathbb{S}^n}F(\nu(p)).
	\end{equation*}
	The differential gives the anisotropic Weingarten map 
    \begin{equation*}
		\mathrm{d}\nu_F : T_{p}M \to T_{\nu_F(p)}\partial\mathcal{W},
	\end{equation*}
whose eigenvalues are called the  anisotropic principal curvatures $\kappa^{F}=\left( \kappa_1^F,\kappa_2^F,\cdots,\kappa_n^F \right)$.	
	Note that $\nu_F \in \partial\mathcal{W}$ for $p\in M$, by \eqref{fprop1} and \eqref{fprop3} we have
	\begin{equation*}
		G(\nu_F)(\nu_F,\nu_F)=1,\ \ G(\nu_F)(\nu_F,V)=0,\ \ \text{for}\ V\in T_{p}M,
	\end{equation*}
	\begin{equation}
		Q(\nu_F)(\nu_F,V,W)=0,\ \ \text{for}\ V,W\in\mathbb{R}^{n+1}.
		\label{equ-q}
	\end{equation}
	Then the induced Riemannian metric on $M\subset\mathbb{R}^{n+1}$ with respect to $G$ is given by
	\begin{equation*}
		g(p):= G\left(\nu_F(p)\right)|_{T_p M},\ p\in M.
	\end{equation*}
	Denote the covariant derivatives of $g$ and $G$ by $\nabla$ and $\hat{D}$ respectively, then the first and second fundamental form of $(M,g)\subset (\mathbb{R}^{n+1},G)$ are
	\begin{equation*}
		g_{ij}:= G(\nu_{F})(\partial_i X,\partial_j X),\ \ \qquad h_{ij}:= G(\nu_F)(\hat{D}_{\partial_i}\nu_F,\partial_j X).
	\end{equation*}
	In these notations,  the anisotropic principal curvatures are eigenvalues of 
    \begin{equation*}
      \left(h^i_j\right)=\left(g^{ik}h_{kj}\right),
    \end{equation*}
and the anisotropic mean curvature with respect to $F$ is the trace of $h$: 
	\begin{equation*}
		H_F = \mathrm{tr}_{g}(h) = g^{ij}h_{ij}.
	\end{equation*}
	We say that $M$ is strictly $F$-mean convex if $H_F>0$, and $F$-mean convex if $H_F\geq 0$.
	
	We have the following anisotropic Gauss--Weingarten type formulas and the anisotropic Gauss-Codazzi equation.
	\begin{lemma}[see {\cite[Lemma 6.7]{Xia12}}]
		\label{lem-formula}
		\begin{align}
			\partial_i\partial_j X = & ~- h_{ij}\nu_F + \nabla_{\partial_i} \partial_j X+ g^{k\ell} A_{ij\ell}\partial_k X \quad &\text{(Gauss formula)} \label{gaussformula}\\
			\partial_i \nu_F = & ~g^{jk}h_{ij}\partial_k X \quad &\text{(Weingarten formula)}   \label{weinformula}\\
			R_{ijk\ell} =&~ h_{ik}h_{j\ell} - h_{i\ell}h_{jk} + \nabla_{\ell}A_{jki}-\nabla_{k}A_{j\ell i}\notag  \\
			&\quad  + g^{pm}A_{jkp}A_{m\ell i} - g^{pm}A_{j\ell p}A_{mki} \notag \\
			=& ~h_{ik}h_{j\ell} - h_{i\ell}h_{jk} + D_{ijk\ell} \quad &\text{(Gauss equation)}  \notag\\
			\nabla_k h_{ij} =& ~\nabla_{i} h_{jk}  + h_k^{\ell} A_{\ell i j} -h_{i}^{\ell} A_{\ell k j} \notag \\
			=&~ \nabla_{i} h_{jk} + C_{ijk} \quad &\text{(Codazzi equation)}  \label{codazziequ}
		\end{align}
		Here $R$ is the Riemannian curvature tensor of $g$ defined by
		\begin{equation*}
			R_{ijkl} =g\left( \nabla_{k}\nabla_{\ell} \partial_j - \nabla_{\ell}\nabla_{k} \partial_j, \partial_i \right),
		\end{equation*}
		and $A , C$   are (0,3) tensors
		\begin{align}
			A_{ijk} = &~-\dfrac{1}{2}\left( h^{\ell}_i Q_{jk\ell} + h^{\ell}_j Q_{i\ell k} - h^{\ell}_k Q_{ij\ell} \right),  \label{Atensor} \\
			C_{ijk}= &~h_k^{\ell} A_{\ell i j} -h_{i}^{\ell} A_{\ell k j},\nonumber
		\end{align}
		and $D$ is a (0,4) tensor
		\begin{equation*}
			D_{ijk\ell}= \nabla_{\ell}A_{jki}-\nabla_{k}A_{j\ell i}\notag
			+ g^{pm}A_{jkp}A_{m\ell i} - g^{pm}A_{j\ell p}A_{mki},
		\end{equation*}
		where $Q_{ijk}=Q(\nu_F)(\partial_i X, \partial_j X, \partial_k X)$.
	\end{lemma}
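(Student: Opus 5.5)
The plan is to derive all four identities from two facts: the ambient frame $\{\partial_1X,\dots,\partial_nX,\nu_F\}$ is $G(\nu_F)$-orthonormal in the normal direction, and derivatives of the frozen metric $G(\nu_F)$ produce $Q$-terms. Because $G$ is evaluated at $\nu_F$ and not at the point $X$, differentiating along $M$ gives
\begin{equation*}
\partial_k\bigl[G(\nu_F)(V,W)\bigr]=G(\nu_F)(\partial_kV,W)+G(\nu_F)(V,\partial_kW)+Q(\nu_F)(\partial_k\nu_F,V,W).
\end{equation*}
Every step below applies this rule and then simplifies with \eqref{equ-q} and the normalisations $G(\nu_F)(\nu_F,\nu_F)=1$, $G(\nu_F)(\nu_F,\partial_iX)=0$.

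\emph{Weingarten formula.} Differentiating $G(\nu_F)(\nu_F,\nu_F)=1$ gives $2G(\nu_F)(\partial_i\nu_F,\nu_F)+Q(\nu_F)(\partial_i\nu_F,\nu_F,\nu_F)=0$. The $Q$-term vanishes by \eqref{equ-q}, so $\partial_i\nu_F$ is tangent to $M$. Its $G(\nu_F)$-components against $\partial_jX$ are $h_{ij}$, since the $\hat D$ versus $\partial$ discrepancy in the definition of $h_{ij}$ is again a multiple of a $Q(\nu_F)(\nu_F,\cdot,\cdot)$-type term, which vanishes. This gives \eqref{weinformula}.

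\emph{Gauss formula.} Decompose $\partial_i\partial_jX$ into its normal and tangential parts.
\begin{itemize}
\item Normal part. Differentiating $G(\nu_F)(\nu_F,\partial_jX)=0$ in $\partial_i$ and using \eqref{equ-q} once more gives $G(\nu_F)(\nu_F,\partial_i\partial_jX)=-h_{ij}$. In particular $h_{ij}$ is symmetric.
\item Tangential part. Compute $\partial_kg_{ij}$ by the rule above. Substituting \eqref{weinformula} into the $Q$-term gives $\partial_kg_{ij}=G(\partial_k\partial_iX,\partial_jX)+G(\partial_iX,\partial_k\partial_jX)+h_k^{\ell}Q_{\ell ij}$. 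Feed this into the Koszul formula for the Christoffel symbols of $g$ and solve for $G(\nu_F)(\partial_i\partial_jX,\partial_\ell X)$. The result is $g(\nabla_{\partial_i}\partial_j,\partial_\ell)$ plus the combination $-\tfrac12\bigl(h^{m}_iQ_{jm\ell}+h^{m}_jQ_{im\ell}-h^m_\ell Q_{ijm}\bigr)$, which equals $A_{ij\ell}$ in \eqref{Atensor} by the total symmetry of $Q$.
\end{itemize}

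\emph{Gauss and Codazzi equations.} Start from the symmetry $\partial_k(\partial_i\partial_jX)=\partial_i(\partial_k\partial_jX)$. Expand each side using \eqref{gaussformula} and \eqref{weinformula}, and differentiate the coefficients $\Gamma$, $A$ and $h$.
\begin{itemize}
\item The $\nu_F$-component of the identity gives $\partial_kh_{ij}-\partial_ih_{kj}$ equal to a combination of $\Gamma$- and $A$-terms. Rewriting it covariantly leaves exactly $h_k^\ell A_{\ell ij}-h_i^\ell A_{\ell kj}$, which is \eqref{codazziequ}.
\item The tangential component, paired with $\partial_mX$ via $G(\nu_F)$, gives the curvature of $\nabla$ as $hh$-terms plus $\nabla A$ and $AA$ terms. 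This is the stated Gauss equation with the tensor $D$.
\end{itemize}

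The main obstacle is this last step. While pairing, the extra $Q(\partial_k\nu_F,\cdot,\cdot)$ terms from differentiating $G(\nu_F)$ must be tracked and shown to cancel against, or be absorbed into, the $A$-terms. Index placements and signs have to match the convention $R_{ijk\ell}=g(\nabla_k\nabla_\ell\partial_j-\nabla_\ell\nabla_k\partial_j,\partial_i)$. I would do this computation in normal coordinates of $g$ at a point, where $\Gamma=0$, to reduce the bookkeeping.
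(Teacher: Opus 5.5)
The paper gives no proof of this lemma; it quotes it from Xia's Lemma~6.7, so there is no internal argument to compare with. Your moving-frame derivation is the standard one, and it is correct. The Koszul step gives $G(\nu_F)(\partial_i\partial_jX,\partial_\ell X)=g(\nabla_{\partial_i}\partial_j,\partial_\ell)+A_{ij\ell}$, with exactly the signs of \eqref{Atensor}. Your remark on $\hat D$ is also fine. If you read $\hat D$ as the Levi-Civita connection of $G$ at $\nu_F$, its Christoffel symbols of the first kind are $\tfrac12 Q$. The correction to $\partial_i\nu_F$ therefore pairs to $\tfrac12 Q(\nu_F)(\partial_i\nu_F,\nu_F,\cdot)=0$.

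Two steps are stated more loosely than they should be, though neither is a gap.

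For Codazzi, the $\nu_F$-component of $\partial_k\partial_i\partial_jX=\partial_i\partial_k\partial_jX$ actually yields
\[
\nabla_kh_{ij}-\nabla_ih_{jk}=h_i^{\ell}A_{kj\ell}-h_k^{\ell}A_{ij\ell},
\]
which is not literally the right-hand side of \eqref{codazziequ}. To convert, use $A_{kj\ell}+A_{\ell kj}=-h_k^{m}Q_{j\ell m}$, which is immediate from \eqref{Atensor}. The two right-hand sides then differ by $h_k^{\ell}h_i^{m}Q_{j\ell m}-h_i^{\ell}h_k^{m}Q_{j\ell m}=0$. This is the same identity used in \eqref{s3.a-vanish}.

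For the Gauss equation, the obstacle you anticipate does not arise. Write the tangential part of the Gauss formula as $\widetilde\nabla_{\partial_i}\partial_j$, where $\widetilde\nabla=\nabla+B$ and $g(B(Y,Z),W)=A(Y,Z,W)$. The tangential part of the integrability condition then says
\[
\widetilde R(\partial_k,\partial_\ell)\partial_j=(h_{\ell j}h_k^{p}-h_{kj}h_\ell^{p})\partial_p.
\]
Here you only read off frame components and lower indices with $g$, which is $\nabla$-parallel. So $G$ is never differentiated again, and no new $Q$-terms appear. The general identity
\[
\widetilde R(U,V)W=R(U,V)W+(\nabla_UB)(V,W)-(\nabla_VB)(U,W)+B(U,B(V,W))-B(V,B(U,W))
\]
then produces exactly $D_{ijk\ell}$ in the convention $R_{ijk\ell}=g(\nabla_k\nabla_\ell\partial_j-\nabla_\ell\nabla_k\partial_j,\partial_i)$, once you use $A_{ijk}=A_{jik}$.
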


	Let $\mathrm{d}\mu$ be the area element of the hypersurface $M$ in ($\mathbb{R}^{n+1},\delta)$ with respect to the induced metric from the Euclidean metric, the anisotropic area element of $M$ with respect to $F$ is defined by
	\begin{equation*}
		\mathrm{d}\mu_F = F(\nu)\mathrm{d}\mu.
	\end{equation*}
	For a smooth hypersurface $M$ we write
	\begin{equation*}
		|M|_F:=\int_M\mathrm{d}\mu_F,
	\end{equation*}
	and, for a set $E$ of locally finite perimeter and an open set $U$,
	\begin{equation*}
		P_F(E;U):=\int_{\partial^*E\cap U}F(\nu_E)\,\mathrm{d}\mathcal H^n.
	\end{equation*}
	\begin{lemma}[see {\cite[Lemma 2.8]{Xia12}}]
		Let $\mathrm{d}\mu_{g}$ be the induced metric volume form of $(M,g)$. Assume that
		\begin{equation*}
			\mathrm{d}\mu_F = F(\nu)\mathrm{d}\mu = \varphi\mathrm{d}\mu_{g},
		\end{equation*}
		then $\nabla_{i} \log\varphi = g^{jk}A_{ijk}$.  Consequently, for any two functions $f_1, f_2\in C^{\infty}(M)$, we have the integration by parts:
		\begin{equation}\label{intebyparts}
			\int_{M} f_1\left(\Delta f_2+ g^{jk}A_{ijk}\nabla^i f_2\right) \mathrm{d}\mu_F = -\int_M\langle \nabla f_1,\nabla f_2\rangle_{g}d\mu_F.
		\end{equation}
	\end{lemma}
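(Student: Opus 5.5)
The plan is to find an explicit formula for the density $\varphi$ and then differentiate it. The key observation is that $\varphi$ depends only on the determinant of the ambient matrix $G(\nu_F)$. Once $\nabla\log\varphi$ is known, the integration by parts formula \eqref{intebyparts} is just the divergence theorem on $(M,g)$ with weight $\varphi$.

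\emph{Step 1: formula for $\varphi$.} Work in local coordinates on $M$. By \eqref{fprop1}, $\langle \nu_F,\nu\rangle=\langle DF(\nu),\nu\rangle=F(\nu)$. The tangent vectors $\partial_iX$ are Euclidean-orthogonal to $\nu$, so we get
\begin{equation*}
F(\nu)\sqrt{\det(\langle\partial_iX,\partial_jX\rangle)}=\big|\det(\nu_F,\partial_1X,\dots,\partial_nX)\big|.
\end{equation*}
Next consider the Gram matrix, with respect to $G(\nu_F)$, of the $n+1$ vectors $(\nu_F,\partial_1X,\dots,\partial_nX)$. We have $G(\nu_F)(\nu_F,\nu_F)=1$ and $G(\nu_F)(\nu_F,\partial_iX)=0$, so this Gram matrix is block diagonal with blocks $1$ and $(g_{ij})$. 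Its determinant is therefore $\det(g_{ij})$. On the other hand, it also equals $\det G(\nu_F)\cdot\det(\nu_F,\partial_1X,\dots,\partial_nX)^2$, where $\det G(\nu_F)$ is the Euclidean matrix determinant. Comparing the two expressions gives
\begin{equation*}
\varphi=\big(\det G(\nu_F)\big)^{-1/2},
\end{equation*}
which is a function of $\nu_F$ alone.

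\emph{Step 2: the gradient.} Differentiate and use the Weingarten formula \eqref{weinformula}, $\partial_i\nu_F=h_i^{\ell}\partial_\ell X$. This gives
\begin{equation*}
\partial_i\log\varphi=-\tfrac12 G^{\alpha\beta}(\nu_F)Q_{\alpha\beta\gamma}(\nu_F)\,\partial_i\nu_F^\gamma=-\tfrac12\,\mathrm{tr}_{G(\nu_F)}Q(\nu_F)(\cdot,\cdot,h_i^\ell\partial_\ell X).
\end{equation*}
Compute this trace in a $G(\nu_F)$-orthonormal basis of $\mathbb{R}^{n+1}$ consisting of $\nu_F$ together with a $g$-orthonormal frame of $T_pM$. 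By \eqref{equ-q}, the $\nu_F$-direction contributes nothing, so
\begin{equation*}
\nabla_i\log\varphi=-\tfrac12 g^{jk}h_i^\ell Q_{jk\ell}.
\end{equation*}
Now compare with \eqref{Atensor}:
\begin{equation*}
g^{jk}A_{ijk}=-\tfrac12 g^{jk}\big(h_i^\ell Q_{jk\ell}+h_j^\ell Q_{i\ell k}-h_k^\ell Q_{ij\ell}\big).
\end{equation*}
Because $Q$ is totally symmetric and $g^{jk}$ is symmetric, the last two terms cancel after swapping $j$ and $k$. This yields $\nabla_i\log\varphi=g^{jk}A_{ijk}$.

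\emph{Step 3: integration by parts.} Since $M$ is closed, the divergence theorem on $(M,g)$ gives
\begin{equation*}
\int_M f_1\,\mathrm{div}_g(\varphi\nabla f_2)\,\mathrm{d}\mu_g=-\int_M\varphi\langle\nabla f_1,\nabla f_2\rangle_g\,\mathrm{d}\mu_g.
\end{equation*}
Expanding the left side with Step 2,
\begin{equation*}
\mathrm{div}_g(\varphi\nabla f_2)=\varphi\big(\Delta f_2+\nabla^i\log\varphi\,\nabla_i f_2\big)=\varphi\big(\Delta f_2+g^{jk}A_{ijk}\nabla^if_2\big).
\end{equation*}
Substituting $\varphi\,\mathrm{d}\mu_g=\mathrm{d}\mu_F$ then gives \eqref{intebyparts}.

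The main obstacle I expect is Step 1: making sure the determinant identity holds with the correct normalization. In particular, $G$ must be evaluated at $\nu_F$, and one needs the $G(\nu_F)$-orthogonality of $\nu_F$ to $TM$ recorded in the preliminaries. Once $\varphi$ is identified, the rest is bookkeeping with the symmetry of $Q$.
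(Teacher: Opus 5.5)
Your proof is correct. The paper quotes this lemma from \cite{Xia12} and gives no proof of its own, so your argument is a self-contained derivation rather than a reconstruction of an in-paper proof.

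The key step is your explicit formula $\varphi=(\det G(\nu_F))^{-1/2}$. You get it from $\langle\nu_F,\nu\rangle=F(\nu)$ and the $G(\nu_F)$-orthogonal splitting of $\mathbb{R}^{n+1}$ into $\mathrm{span}\{\nu_F\}$ and $T_pM$. Pointwise, this says that $\mathrm{d}\mu_F$ is the interior product of $\nu_F$ with the Euclidean volume form, while $\mathrm{d}\mu_g$ is the interior product of $\nu_F$ with the volume form of the inner product $G(\nu_F)$, both restricted to $TM$.

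This makes $\varphi$ a function of $\nu_F$ alone. Hence $\nabla\log\varphi$ involves only $Q$ and the Weingarten map. The statement then reduces to the contraction identity $g^{jk}A_{ijk}=-\tfrac12 g^{jk}h_i^{\ell}Q_{jk\ell}$. You derive this correctly from the total symmetry of $Q$, and the $\nu_F$-direction of the trace drops out by \eqref{equ-q}.

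An alternative would be to differentiate $\log\bigl(F(\nu)\sqrt{\det\langle\partial_jX,\partial_kX\rangle}\bigr)-\log\sqrt{\det g}$ directly. One would then compare Euclidean and anisotropic Christoffel symbols through the Gauss formula \eqref{gaussformula}. That also works but needs more bookkeeping, whereas your route shows the structural reason for the identity.

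The only implicit hypothesis, used in Step 3, is that $M$ is closed. That is the setting throughout the paper.
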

	
	The anisotropic support function of $M$ with respect to $F$ is defined as \begin{equation*}
		\chi=G(\nu_F)(\nu_F,X).
	\end{equation*}
	Using \eqref{fprop1}, \eqref{fprop3} and \eqref{anisocauchy}, we see that
	\begin{equation}
		\chi=G(\nu_F)(\nu_F,X)=\langle DF^{\circ}(DF(\nu)),X \rangle=\dfrac{\langle X,\nu \rangle}{F(\nu)}\leq F^{\circ}(X).
		\label{equ-uf0}
	\end{equation}
	
	If $M=\partial\mathcal{W}$, then $\nu_F(X)=X$, $\chi=1$, $h_{ij}=g_{ij}$ and $H_F=n$.
	
	\subsection{Anisotropic mean curvature flow}
	\label{appB}
	We recall some basic results on anisotropic mean curvature flow, which will be used in Subsection \ref{sec.4-1} for smoothing a $C^1$ hypersurface. More details can be found in   \cite{Andrews01}, \cite{WX21} and \cite{CL07}. 

The anisotropic mean curvature flow (AMCF) is a family of immersions $X:M^n\times[0,T)\to\mathbb{R}^{n+1}$ satisfying the equation
	\begin{equation}
		\label{AMCF}
		\partial_t X = -H_F\nu_F,
	\end{equation}
	where $H_F$ is the anisotropic mean curvature of $M_t=X(M,t)$ and $\nu_F$ is the anisotropic normal vector.  We have the following evolution equations along AMCF. 
	
	\begin{lemma}
		Along AMCF \eqref{AMCF}, the following evolution equations hold:
		\begin{align}
            \partial_t \mathrm{d}\mu_F =& - H_F^2 \mathrm{d}\mu_F, &\label{equ-amcfarea}\\
			\partial_t h^j_i =& \nabla^j\nabla_i H_F + H_F (h^2)^j_i +  g^{jk}A_{pik}\nabla^p H_F, \label{equ-aniwei}\\
			\partial_t H_F =& \Delta H_F + H_F |h|_{g}^2 +  g^{ik}A_{pik}\nabla^p H_F. \label{equ-amceq}
		\end{align}
	\end{lemma}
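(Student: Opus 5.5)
The plan is to derive all three identities from the anisotropic Gauss--Weingarten formulas of Lemma \ref{lem-formula}, using the flow $\partial_t X=-H_F\nu_F$. The first step is the evolution of the anisotropic normal. Two facts about $\partial_t\nu_F$ determine it.
\begin{itemize}
\item Since $\nu_F\in\partial\mathcal W$, the vector $\partial_t\nu_F$ is tangent to $\partial\mathcal W$ at $\nu_F$. That tangent space coincides with the Euclidean tangent space of $M$, because both are orthogonal to $\nu$. Hence $\partial_t\nu_F=a^k\partial_kX$.
\item Differentiate $G(\nu_F)(\nu_F,\partial_jX)=0$ in $t$. The $Q$-terms containing $\nu_F$ vanish by \eqref{equ-q}, and $G(\nu_F)(\nu_F,\nu_F)=1$.
\end{itemize}
Using $\partial_j\partial_tX=-\partial_jH_F\,\nu_F-H_Fh_j^k\partial_kX$, these give
\begin{equation*}
\partial_t\nu_F=\nabla H_F=g^{jk}\partial_jH_F\,\partial_kX .
\end{equation*}

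For the area element \eqref{equ-amcfarea}, I will avoid the intrinsic volume form $\mathrm d\mu_g$. Instead I write $\mathrm d\mu_F=F(\nu)\mathrm d\mu=\langle \nu_F,\nu\rangle\mathrm d\mu$, where $\nu\,\mathrm d\mu$ is the Euclidean vector area element. In local coordinates, $\langle \nu_F,\nu\rangle\mathrm d\mu$ is the determinant $\det(\nu_F,\partial_1X,\dots,\partial_nX)\,\mathrm dx$. Differentiating in $t$ produces three kinds of terms.
\begin{itemize}
\item The term where the $\nu_F$ column is differentiated vanishes, since $\partial_t\nu_F$ is a combination of the $\partial_kX$.
\item In the term where $\partial_iX$ is replaced by $\partial_i\partial_tX=-\partial_iH_F\,\nu_F-H_Fh_i^k\partial_kX$, the $\nu_F$ component dies against the first column.
\item The remaining part of that term contributes $-H_Fh_i^i$.
\end{itemize}
Summing over $i$ gives $\partial_t\mathrm d\mu_F=-H_F^2\,\mathrm d\mu_F$. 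This route sidesteps the extra $Q$-term in $\partial_t g_{ij}=-2H_Fh_{ij}+\nabla^pH_F\,Q_{pij}$. If one works with $\mathrm d\mu_g$ instead, that $Q$-term has to be cancelled by $\partial_t\log\varphi$ via Lemma 2.8 of \cite{Xia12}.

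For \eqref{equ-aniwei}, I differentiate the Weingarten formula \eqref{weinformula}, $\partial_i\nu_F=h_i^k\partial_kX$, in $t$ and commute $\partial_t$ with $\partial_i$. The right-hand side gives
\begin{equation*}
\partial_th_i^k\,\partial_kX+h_i^k\partial_k\partial_tX=\partial_th_i^k\,\partial_kX-H_F(h^2)_i^k\partial_kX-h_i^k\partial_kH_F\,\nu_F .
\end{equation*}
The left-hand side is $\partial_i(\nabla^kH_F\,\partial_kX)$. Expanding $\partial_i\partial_kX$ with the Gauss formula \eqref{gaussformula} gives
\begin{equation*}
\nabla_i\nabla^kH_F\,\partial_kX+\nabla^pH_F\,g^{k\ell}A_{ip\ell}\,\partial_kX-\nabla^pH_F\,h_{ip}\,\nu_F ,
\end{equation*}
where the Christoffel terms assemble into the covariant derivative $\nabla_i\nabla^kH_F$.

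Comparing the $\partial_kX$ components then yields
\begin{equation*}
\partial_th_i^j=\nabla_i\nabla^jH_F+H_F(h^2)_i^j+g^{jk}A_{ipk}\nabla^pH_F .
\end{equation*}
To reach the stated form, two facts are needed. First, $\nabla_i\nabla^jH_F=\nabla^j\nabla_iH_F$, since $\nabla$ is torsion free and metric compatible. Second, $A_{ipk}=A_{pik}$, which follows from \eqref{Atensor} and the full symmetry of $Q$. The $\nu_F$ components agree automatically, which serves as a consistency check. Taking the trace in $i=j$ gives \eqref{equ-amceq}, with $|h|_g^2=\operatorname{tr}(h^2)$.

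The main care point is the bookkeeping of the non-Euclidean connection. The $A$-term in the Gauss formula must be kept, since it is exactly what produces the gradient term in \eqref{equ-aniwei}. The tangency of $\partial_t\nu_F$ must be justified carefully, since it is used to kill terms in both computations.
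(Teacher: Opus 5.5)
Your argument is correct. The paper does not prove this lemma; it quotes it from the AMCF literature, so there is no internal proof to match. Your derivation of $\partial_t\nu_F=\nabla H_F$, and of \eqref{equ-aniwei} by differentiating the Weingarten formula and expanding with the Gauss formula, is the standard one.

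The points you flagged are all fine:
\begin{itemize}
\item Tangency of $\partial_t\nu_F$ is immediate from \eqref{fprop3}, since the normal of $\partial\mathcal W$ at $DF(\nu)$ is $DF^\circ(DF(\nu))=\nu/F(\nu)$.
\item $\{\nu_F,\partial_1X,\dots,\partial_nX\}$ is a basis because $\langle\nu_F,\nu\rangle=F(\nu)>0$, so comparing components is legitimate.
\item $A_{ipk}=A_{pik}$ follows from \eqref{Atensor} and the total symmetry of $Q$. It is also forced by the Gauss formula, since $\partial_i\partial_jX$, $h_{ij}$ and the Christoffel symbols are symmetric in $i,j$.
\end{itemize}

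Where you genuinely depart from the usual bookkeeping is \eqref{equ-amcfarea}. Writing $\mathrm d\mu_F=\det(\nu_F,\partial_1X,\dots,\partial_nX)\,\mathrm dx$ uses only $F(\nu)=\langle DF(\nu),\nu\rangle$, the tangency of $\partial_t\nu_F$, and the Weingarten formula. It never needs $\partial_tg_{ij}$ or the density $\varphi$ with $\mathrm d\mu_F=\varphi\,\mathrm d\mu_g$, so it is shorter and cleaner than the intrinsic route.

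One small caveat on your side remark about that intrinsic route. The lemma $\nabla_i\log\varphi=g^{jk}A_{ijk}$ only gives spatial derivatives. To cancel the $Q$-term in $\tfrac12g^{ij}\partial_tg_{ij}$ you would instead need the pointwise fact that $\varphi$ depends only on $\nu_F$, with $\partial_t\log\varphi=-\tfrac12g^{jk}Q(\nu_F)(\partial_t\nu_F,\partial_jX,\partial_kX)$.
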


	Let $\{e_{0},e_{1},...,e_{n}\}$ be a basis of $\mathbb{R}^{n+1}$ with dual basis $\{\phi^{0},\phi^{1},...,\phi^{n}\}$. Consider a graph-like hypersurface $M$ defined through local embeddings
	\begin{equation*}
		x=y^{i}e_{i}+u(y^{1},...,y^{n})e_{0},
	\end{equation*}
	where $y^1,\cdots,y^n$ are local coordinates on $M$. The unit normal field is given by
	\begin{equation*}
		\nu = \dfrac{Du - \phi^0}{\sqrt{1+|Du|^2}} = \dfrac{\sum_{i=1}^{n}u_{i}\phi^{i}-\phi^0}{\sqrt{1+|Du|^2}},
	\end{equation*}
	where $u_i=\dfrac{\partial u}{\partial y^i}$. The anisotropic mean curvature of $M$ and the squared norm of the anisotropic second fundamental form with respect to $g$ have the form 
	\begin{align*}
		H_F = &~u_{ij}D^iD^j F|_{Du-\phi^0},\\ 
        |h|_{g}^2 = & ~D^iD^kF|_{Du-\phi^0}D^jD^{\ell}F|_{Du-\phi^0} u_{ij}u_{k\ell},
	\end{align*}
    where $u_{ij}=\dfrac{\partial^2 u}{\partial y^i \partial y^j}$ denotes the second-order derivatives. Then the anisotropic mean curvature flow of the graph hypersurface $M$ is equivalent to the following PDE
	\begin{equation*}
		\partial_tu=FD^2F|_{Du-\phi^0}(\phi^i,\phi^j)D^2u(e_i,e_j).
	\end{equation*}

	For later use, set
	\begin{equation}
	\label{eq:graph-coefficients}
	 A^{ij}(p):=F(p-\phi^0)D^2F|_{p-\phi^0}(\phi^i,\phi^j).
	\end{equation}
	For every $K<\infty$ there are constants $0<\lambda_K\leq\Lambda_K<\infty$, depending only on $F$, $K$, and the fixed basis, such that
	\begin{equation}
	\label{eq:graph-uniform-parabolicity}
	 \lambda_K|\xi|^2\leq A^{ij}(p)\xi_i\xi_j
	 \leq\Lambda_K|\xi|^2,
	 \qquad |p|\leq K.
	\end{equation}
	Indeed, $p-\phi^0$ ranges over a compact subset of $\mathbb R^{n+1}\setminus\{0\}$, and no nonzero covector in $\operatorname{span}\{\phi^1,\ldots,\phi^n\}$ is parallel to $p-\phi^0$. Thus the strict convexity of $F$ in nonradial directions and compactness give \eqref{eq:graph-uniform-parabolicity}.
	
	\subsection{Weak inverse anisotropic mean curvature flow} \label{sec.2-wiamcf}

Suppose that $M_t$ is a smooth solution to the IAMCF \eqref{IAMCF}, and can be given by level sets $M_t=\partial E_t$ of a function $u:\mathbb{R}^{n+1} \to \mathbb{R}$, with $E_t=\{x\in\mathbb{R}^{n+1}:u(x)<t\}$. If $u$ is smooth and $Du\neq 0$, then \eqref{IAMCF} is equivalent to the degenerate elliptic equation 
\begin{equation}\label{WIAMCF}
    \text{div}(DF(Du))=F(Du)
\end{equation}
in $\mathbb{R}^{n+1}\setminus \Omega$ and $\Omega=\{u<0\}$.

The weak solution of \eqref{WIAMCF} was defined in \cite{DGX23} via the minimizing principle as in Huisken--Ilmanen \cite{HI01} for the isotropic case.  
Given an open bounded set $\Omega\subset \mathbb{R}^{n+1}$ with smooth boundary, a locally Lipschitz function $u$ on $\mathbb{R}^{n+1}$ is called a weak IAMCF starting from $\Omega$, if for every locally Lipschitz function $\varphi$  with $\{u\neq\varphi\}\subset\subset \mathbb{R}^{n+1}\setminus\Omega$ and any compact set $K\subset \mathbb{R}^{n+1}\setminus\Omega$ containing $\{u\neq\varphi\}$ we have 
  \begin{equation*}
        J_{F,u}^K(u) \leq J_{F,u}^K(\varphi)
    \end{equation*}
and $\Omega=\{u<0\}$, where the functional is defined as 
 \begin{equation*}
        J_{F,u}^K(\varphi) = \int_{K} \left[ F(D\varphi)+\varphi F(Du) \right] \mathrm{d}x.
\end{equation*}
 We say that $u$ is a proper solution if in addition $\lim\limits_{|x|\to+\infty}u(x)=+\infty$.

The weak solution of \eqref{WIAMCF} can also be defined by set functional.
\begin{definition}
\label{def-minimizes}
    We say that $E$ minimizes $J_{F,u}$ in a set $U$ (minimizes on the outside, minimizes on the inside, resp.) if
    \begin{equation*}
        J_{F,u}^K(E) \leq J_{F,u}^K(G)
    \end{equation*}
    for any $G$ such that $E\triangle G\subset\subset U$  ($G\supset E$, $G\subset E$ resp.) and any compact set $K$ containing $E\triangle G$. Here $E\triangle G = (E\backslash G)\cup(G\backslash E)$, and 
    \begin{equation*}
        J_{F,u}^K(G) = \int_{\partial^* G \cap K} F(\nu)\mathrm{d}\mathcal{H}^{n} - \int_{G\cap K} F(Du) \mathrm{d}x
    \end{equation*}
    for a set $G$ of locally finite perimeter, and $\partial^*G$ denotes the reduced boundary of $G$.
\end{definition}

Then a locally Lipschitz function $u$ on $\mathbb{R}^{n+1}$ is a weak solution of IAMCF if for each $t>0$, $E_t=\{u<t\}$ minimizes $J_{F,u}$ in $\mathbb{R}^{n+1}\setminus\Omega$ in the sense of Definition \ref{def-minimizes}. Therefore, we also say that $E_t$ is a weak solution of IAMCF with initial data $E_0=\Omega$. 

We have the following comparison principle:
\begin{lemma}[see {\cite[Proposition 3.3]{DGX23}}]
    \label{lem-weakcom}
    Let $\{E_t\}_{t>0}$ and $\{F_t\}_{t>0}$ be two weak solutions of IAMCF with initial data $E_0$, $F_0$ respectively, and $E_0\subset F_0$, then $E_t\subset F_t$ as long as $E_t$ is precompact.
    
    In particular, for a given $E_0$ there exists at most one solution $\{E_t\}_{t>0}$ of \eqref{WIAMCF} such that $E_t$ is precompact. 
\end{lemma}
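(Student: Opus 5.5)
The plan is to follow the proof of the comparison theorem of Huisken and Ilmanen \cite[Theorem 2.2]{HI01}. The isotropic Dirichlet energy is replaced by $F(D\cdot)$, and the only structural facts used are convexity, homogeneity and evenness of $F$. Let $u$ and $v$ be the weak solutions whose sublevel sets are $E_t$ and $F_t$, with $\{u<0\}=E_0\subset F_0=\{v<0\}$. The claim $E_t\subset F_t$ is equivalent to $v\le u$ on the region swept by the precompact sets $E_t$. Suppose this fails. Then $A:=\{v>u\}$ is nonempty. After restricting to $t$ with $E_t$ precompact, $A$ is precompact and avoids $\overline{E_0}$, so $A\subset\subset\mathbb{R}^{n+1}\setminus E_0$. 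This follows because $v\le 0\le u$ fails only outside $E_0$, and on $\partial E_0$ we have $u=0\ge v$.

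\textbf{Step 1 (competitor comparison).} I would test $J_{F,u}$ with $\varphi=\max(u,v)$ and $J_{F,v}$ with $\psi=\min(u,v)$. Both differ from the respective solution only on $A$, so they are admissible with $K=\overline A$. On $A$ we have $\max(u,v)=v$ and $\min(u,v)=u$. The two minimality inequalities are
\begin{align*}
\int_A \big[F(Du)+uF(Du)\big] &\le \int_A\big[F(Dv)+vF(Du)\big],\\
\int_A \big[F(Dv)+vF(Dv)\big] &\le \int_A\big[F(Du)+uF(Dv)\big].
\end{align*}
Adding them gives
\begin{equation*}
\int_A (v-u)\big(F(Du)-F(Dv)\big)\,\mathrm{d}x\ \ge\ 0 .
\end{equation*}
Equivalently, at the level of sets one can use the submodularity $P_F(E\cup G)+P_F(E\cap G)\le P_F(E)+P_F(G)$ of the anisotropic perimeter. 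This follows from convexity of $F$ exactly as in the Euclidean case.

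\textbf{Step 2 (strictness).} The inequality in Step 1 is not yet a contradiction. It only says that the two solutions cannot be \emph{strictly} ordered in the wrong direction. As in \cite{HI01}, I would break the symmetry by perturbing one solution into a strict sub- or supersolution. One option is to compare $v$ with $u_\delta:=(1+\delta)u$, or with $u+\delta\eta$ for a suitable cutoff $\eta$. In the computation above, $F(Du_\delta)=(1+\delta)F(Du)$ by $1$-homogeneity of $F$, and this produces a strictly signed extra term $\delta\int_A(\dots)F(Du)$. Running Step 1 for the pair $(u_\delta,v)$ on $A_\delta=\{v>u_\delta\}$, I would obtain $|A_\delta|=0$ for each small $\delta>0$, using that $F(Du)>0$ a.e. where $u$ is nonconstant. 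Letting $\delta\downarrow0$ then gives $v\le u$.

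\textbf{Main obstacle and conclusion.} I expect the main obstacle to be this strictness step. The transfer to $F$ is harmless. The delicate part is that $F(Du)$ may vanish on sets of positive measure, namely on the jump regions of the weak flow. There the perturbation gains nothing, so one must argue that such regions cannot carry $\{v>u\}$. I would first try the Huisken--Ilmanen device: apply the set-version comparison for almost every $t$ to $E_t$ against $F_t$, using outward minimization of $E_t$ and inward minimization of $F_t$. Since $\{u=\text{const}\}$ regions lie inside the minimizing hulls that both flows jump to, they are absorbed by the set inclusions. Uniqueness of precompact solutions then follows immediately by applying the comparison in both directions with $E_0=F_0$.
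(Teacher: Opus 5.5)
The paper gives no proof of this lemma; it quotes \cite[Proposition 3.3]{DGX23}, the anisotropic version of \cite[Theorem 2.2]{HI01}. So the question is whether your argument stands on its own. Step 1 is fine, but Step 2 has a genuine gap, and that is exactly where the proof has to happen.

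Minimality of $u$ says that $u_\delta=(1+\delta)u$ minimizes $\int F(D\varphi)+\varphi F(Du)$. The weight there is $F(Du)=F(Du_\delta)/(1+\delta)$, not $F(Du_\delta)$. If you run Step 1 for the pair $(u_\delta,v)$ on $A_\delta=\{v>u_\delta\}$, the strictly signed term $\delta\int_{A_\delta}(v-u_\delta)F(Du)$ is used up exactly in turning $F(Du_\delta)$ back into $F(Du)$. What is left is
\[
\int_{A_\delta}(v-u_\delta)\bigl(F(Du)-F(Dv)\bigr)\,dx\ \ge\ 0,
\]
which is the same indefinite inequality as in Step 1. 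Nothing in it forces $|A_\delta|=0$.

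The obstruction is also not confined to regions where $F(Du)=0$. The integrand has no sign even where $Du,Dv\neq0$. The pointwise relation $Dv=Du_\delta$ that drives the smooth maximum-principle proof holds only at the maximum of $v-u_\delta$. Max/min competitors integrated over all of $A_\delta$ cannot see it.

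Your last paragraph does not repair this. The pairing there is reversed. The bad set is $E_t\setminus F_t$, so you need inward minimality of $E_t$ against $E_t\cap F_t$ and outward minimality of $F_t$ against $E_t\cup F_t$. Even with the right pairing, submodularity gives only $\int_{E_t\setminus F_t}(F(Du)-F(Dv))\ge0$, which integrates in $t$ back to Step 1. The appeal to minimizing hulls absorbing the constancy regions is not an argument.

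One way to close the gap is to localize at the top of $v-u_\delta$ and add a non-lattice test function.
\begin{enumerate}
\item Let $m=\max(v-u_\delta)>0$, fix $s\in(m-\delta,m)$, let $U_s=\{v-u_\delta>s\}$, and set $\eta=(v-u_\delta-s)^+$. Then $0\le\eta<\delta$ and $\eta$ is supported in $\overline{U_s}$.
\item Testing $v$ with $v-\eta$ and $u$ with $u+\eta/(1+\delta)$ gives $\int\eta F(Dv)\le\int\eta F(Du)$.
\item Testing $u$ with $u+\theta\eta^2/2$ for small $\theta>0$, and using convexity of $F$ on the segment from $Du$ to $Du+D\eta/(1+\delta)$, gives $\int\eta F(Dv)\ge\int\eta F(Du)\,(1+\delta-\eta/2)$.
\item Together these force $\int\eta F(Du)(\delta-\eta/2)\le0$. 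Since $\eta<\delta$, this gives $Du=0$ a.e.\ on $U_s$.
\item Only now do constancy regions enter, and they need a separate argument. On a component $V$ of $U_s$, $u$ is constant, so $v$ exceeds its boundary value inside $V$. Adding the compact piece $\{v\ge\tau\}\cap V$ to $\{v<\tau\}$ then strictly lowers $J_{F,v}$, which contradicts outward minimality.
\item Hence $v\le(1+\delta)u$ for every small $\delta$, and letting $\delta\downarrow0$ gives $v\le u$.
\end{enumerate}

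Your reduction also needs two routine adjustments. First, replace $v$ by $v-\varepsilon$: $\partial E_0$ and $\partial F_0$ may touch, so $\{v>u\}$ need not be compactly contained in the region. Second, truncate at a level $t$ for which $E_t$ is precompact.
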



The following lemma says that a classical solution to IAMCF \eqref{IAMCF} is also a weak solution to IAMCF \eqref{eq:WIAMCF}.
\begin{lemma}[see {\cite[Lemma 2.5]{GWZ26}}]
		\label{lem-smweak}
		Let $\{M_t\}_{c\leq t<d}$ be a smooth family of hypersurfaces with positive anisotropic mean curvature that solves \eqref{IAMCF} classically. Let $u=t$ on $M_t$, $u<c$ in the region bounded by $M_c$, and set $E_t=\{x\in\mathbb{R}^{n+1}:u(x)<t\}$. Then for $c\leq t<d$, $E_t$ minimizes $J_{F,u}$ in $E_d\backslash \overline{E_c}$ in the sense of Definition \ref{def-minimizes}. In other words, smooth flows of IAMCF satisfy the weak formulation in the domain they foliate.
	\end{lemma}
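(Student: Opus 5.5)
The plan is to adapt the calibration argument of Huisken--Ilmanen \cite[Lemma 2.3]{HI01}, using the anisotropic normal $\nu_F$ as the calibrating vector field. Throughout, write $U:=E_d\setminus\overline{E_c}$.

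\textbf{Step 1: $u$ is a smooth classical solution in $U$.} Since $\{M_t\}_{c\le t<d}$ is a smooth foliation of $U$ and $H_F>0$, the function $u$ is smooth in $U$ with $Du\neq0$. Hence $Du=|Du|\nu$, where $\nu$ is the outward unit normal of the level set. Differentiating $u(X(x,t))=t$ in $t$ and using \eqref{IAMCF} gives
\begin{equation*}
\frac{|Du|\langle \nu,\nu_F\rangle}{H_F}=1 .
\end{equation*}
By \eqref{fprop1}, $\langle \nu,DF(\nu)\rangle=F(\nu)$, so $|Du|F(\nu)=H_F$, that is, $F(Du)=H_F$.

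\textbf{Step 2: the calibrating field.} Set $V:=DF(Du)$. Since $DF$ is $0$-homogeneous, $V=DF(\nu)=\nu_F$, which is smooth on $U$. Its Euclidean divergence is the anisotropic mean curvature of the level sets, $\operatorname{div}V=H_F=F(Du)$. This is exactly equation \eqref{WIAMCF}, which is the equivalence recalled in Subsection \ref{sec.2-wiamcf}. Two pointwise facts will be used:
\begin{itemize}
\item on $M_t$, $\langle V,\nu\rangle=F(\nu)$, by \eqref{fprop1};
\item for any unit vector $\xi$, $\langle V,\xi\rangle\le F^\circ(DF(\nu))F(\xi)=F(\xi)$, by \eqref{anisocauchy} and $F^\circ(DF)=1$.
\end{itemize}

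\textbf{Step 3: comparison with a competitor.} Fix $t\in[c,d)$, a set $G$ of locally finite perimeter with $E_t\triangle G\subset\subset U$, and a compact $K\supset E_t\triangle G$. The integrands of $J^K_{F,u}(G)$ and $J^K_{F,u}(E_t)$ agree away from a neighbourhood of $\overline{E_t\triangle G}$. So we may localize: choose an open $U'$ with $E_t\triangle G\subset\subset U'\subset\subset U$, and $J^K_{F,u}(G)-J^K_{F,u}(E_t)$ equals
\begin{equation*}
\int_{\partial^*G\cap U'}F(\nu_G)-\int_{M_t\cap U'}F(\nu)-\int_{G\cap U'}F(Du)+\int_{E_t\cap U'}F(Du).
\end{equation*}
Replace $F(Du)$ by $\operatorname{div}V$. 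Apply the Gauss--Green theorem for sets of finite perimeter to $V\eta$, where $\eta$ is a cutoff equal to $1$ near $\overline{E_t\triangle G}$ and supported in $U'$. This turns the volume terms into
\begin{equation*}
\int_{\partial^*G}\eta\langle V,\nu_G\rangle-\int_{M_t}\eta\langle V,\nu\rangle .
\end{equation*}
The terms involving $D\eta$ cancel, because $G$ and $E_t$ coincide on the support of $D\eta$; likewise the boundaries $\partial^*G$ and $M_t$ coincide where $\eta<1$. Using the first bullet on $M_t$, the difference becomes
\begin{equation*}
J^K_{F,u}(G)-J^K_{F,u}(E_t)=\int_{\partial^*G\cap U'}\eta\bigl(F(\nu_G)-\langle V,\nu_G\rangle\bigr)\,\mathrm{d}\mathcal H^n .
\end{equation*}
By the second bullet this is nonnegative, so $E_t$ minimizes $J_{F,u}$ in $U$. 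The same computation covers $t=c$, since $\partial E_c=M_c$ lies on the boundary of $U$ and competitors only differ inside $U$.

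\textbf{Main obstacle.} The geometric content is just the anisotropic Cauchy--Schwarz inequality, so the delicate part is the bookkeeping in Step 3. One must check that the boundary contributions outside $E_t\triangle G$ cancel exactly, and that $J^K$ is independent of the choice of $K$ up to identical terms. One also needs Gauss--Green to apply on reduced boundaries with $V$ smooth only on $U$, which is why the argument is localized to $U'\subset\subset U$.
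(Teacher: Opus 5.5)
Your proof is correct. It is the anisotropic version of the Huisken--Ilmanen calibration argument \cite{HI01}: $V=DF(Du)=\nu_F$ satisfies $\operatorname{div}V=F(Du)$ in the foliated region, $\langle V,\nu\rangle=F(\nu)$ on $M_t$, and $\langle V,\xi\rangle\le F(\xi)$ by \eqref{anisocauchy}; the paper gives no proof of its own and only cites \cite[Lemma 2.5]{GWZ26}, and this calibration is the standard route to the statement. One small sign slip: Gauss--Green turns $\int_{E_t\cap U'}F(Du)-\int_{G\cap U'}F(Du)$ into $\int_{M_t}\eta\langle V,\nu\rangle-\int_{\partial^*G}\eta\langle V,\nu_G\rangle$, the negative of your intermediate display, but your final identity $J^K_{F,u}(G)-J^K_{F,u}(E_t)=\int_{\partial^*G\cap U'}\eta\bigl(F(\nu_G)-\langle V,\nu_G\rangle\bigr)\,\mathrm{d}\mathcal H^n$ is correct.
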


	The weak anisotropic mean curvature is defined by the first variational formula.
	\begin{definition}
		\label{def-wamc}
		Let $M^n\subset\mathbb{R}^{n+1}$ be a hypersurface of $C^1$ or $C^1$ with a small singular set and locally finite Hausdorff measure. A locally integrable function $H_F$ on $M$ is called weak anisotropic mean curvature if it satisfies
		\begin{equation}
			\int_{M^n} \mathrm{div}_{F,M}(V) F(\nu)\mathrm{d}\mathcal{H}^{n} = \int_{M^n} H_F \langle V,\nu \rangle \mathrm{d}\mathcal{H}^{n},
			\label{equ-wamc}
		\end{equation}
		for any vector fields $V\in C_c^{\infty}(\mathbb{R}^{n+1})$, where $\mathrm{div}_{F,M}(V)=\mathrm{div}(V)-\left\langle D_{\nu_F} V, \dfrac{\nu}{F(\nu)} \right\rangle$.
	\end{definition}
	
Let $u$ be a weak solution of \eqref{eq:WIAMCF} with initial data $E_0$ and let $M_t=\partial\{u<t\}$. Then for a.e. $t$, the weak anisotropic mean curvature $H_F$ of $M_t$ satisfies (see {\cite[Proposition 3.5]{DGX23}})
\begin{equation} \label{lem-level}
H_F = F(Du)\ \ a.e.\ x\in M_t.
\end{equation}

Finally, the following gradient estimate and asymptotic Wulff shape property of the weak IAMCF were proved recently by the authors \cite{GWZ26}, which will be used in Section \ref{sec5} to show that for sufficiently large time the level sets are strictly star-shaped. 

\begin{lemma}[see {\cite[Corollary 1.5]{GWZ26}}]
	\label{lem:weak-gradient}
	Let $u$ be the weak solution of IAMCF \eqref{eq:WIAMCF} starting from an open bounded set $\Omega\subset\mathbb{R}^{n+1}$ with smooth boundary. After translating the coordinates so that $0\in\Omega$, there exists a constant $C=C(n,F,\Omega)>0$ such that
	\begin{equation}
		\label{eq:weak-gradient}
		F(Du)(x)\leq \frac{C}{F^\circ(x)}
	\end{equation}
	for a.e. $x\in\mathbb{R}^{n+1}\setminus\Omega$.
\end{lemma}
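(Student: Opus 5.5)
The plan is to reduce the estimate to a \emph{local} interior gradient bound of Huisken--Ilmanen type, applied at the scale $F^\circ(x)$, and to handle points near $\Omega$ with the global bound of Theorem~\ref{thmwiamcf}. After translating so that $0\in\Omega$, fix $R_0>0$ with $\Omega\subset\mathcal W_{R_0}(0)$.

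\emph{Near region.} For $x\in\mathcal W_{2R_0}(0)\setminus\Omega$, Theorem~\ref{thmwiamcf} gives
\begin{equation*}
F(Du)(x)\le \sup_{\partial\Omega}H_F^+ =: C_\Omega .
\end{equation*}
Since $F^\circ(x)<2R_0$ there, this implies $F(Du)(x)\le 2R_0C_\Omega/F^\circ(x)$.

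\emph{Far region.} The main step is a local estimate of the following form. If $\mathcal W_r(x_0)\cap\Omega=\emptyset$, then
\begin{equation*}
F(Du)(x_0)\le \frac{C(n,F)}{r}
\end{equation*}
for a.e.\ such $x_0$, or at Lebesgue points of $Du$. Given this, for $F^\circ(x)\ge 2R_0$ I take $r=F^\circ(x)/2$. By the triangle inequality for $F^\circ$, every $y\in\mathcal W_r(x)$ satisfies $F^\circ(y)> F^\circ(x)/2\ge R_0$, so the Wulff ball misses $\Omega$. This gives $F(Du)(x)\le 2C(n,F)/F^\circ(x)$, and combining the two regions yields \eqref{eq:weak-gradient} with $C=\max\{2R_0C_\Omega,\,2C(n,F)\}$.

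\emph{The local estimate.} I would prove it as in Huisken--Ilmanen \cite{HI01}, by elliptic regularization. Consider smooth solutions $u_\varepsilon$ of the regularized equation
\begin{equation*}
\operatorname{div}\bigl(DF_\varepsilon(Du)\bigr)=F_\varepsilon(Du),
\end{equation*}
where $F_\varepsilon$ is a suitable smooth strictly convex perturbation of $F$, for example $F_\varepsilon(p)=\sqrt{F(p)^2+\varepsilon^2}$. This is the same approximation used in \cite{DGX23} to construct $u$. Equivalently, the graphs of $u_\varepsilon/\varepsilon$ form a translating IAMCF in one dimension higher for the lifted norm, so it suffices to prove a local speed bound for smooth IAMCF.

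On such a smooth flow I apply the maximum principle to a test function of the form
\begin{equation*}
\frac{\eta}{H_F},\qquad \eta=\bigl(r^2-F^\circ(X-x_0)^2\bigr)_+ \ \text{(or a power of it)},
\end{equation*}
restricted to the region swept inside the Wulff ball. The evolution of $1/H_F$ under IAMCF has a good sign, and the cutoff $\eta$ is adapted to $G$, because $D^2(\tfrac12 (F^\circ)^2)=G$ is uniformly bounded. At an interior maximum one therefore obtains $H_F\le C(n,F)/r$ at $x_0$, uniformly in $\varepsilon$. Passing $\varepsilon\to0$ by the local Lipschitz convergence $u_\varepsilon\to u$ then gives the a.e.\ bound for $F(Du)$.

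\emph{Main obstacle.} I expect the hardest part to be the maximum principle computation. The anisotropic evolution equations for $H_F$ and for $F^\circ(X-x_0)^2$ along the flow carry the extra $Q$- and $A$-tensor terms from the tangential gradient. These must be absorbed into $C(n,F)/r$ using only bounds on $Q$ and $T$ over the compact set $\partial\mathcal W$, together with Young's inequality against the good term $|\nabla H_F|^2/H_F^3$. A secondary technical point is checking that the cutoff vanishes on the lateral and initial boundary of the regularized problem, so that the maximum is interior, with no contribution from $\partial\Omega$ because $\mathcal W_r(x_0)\cap\Omega=\emptyset$.
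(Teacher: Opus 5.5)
Your reduction is sound. The near region is handled by the global bound of Theorem~\ref{thmwiamcf}. In the far region, the triangle inequality for the even norm $F^\circ$ puts $\mathcal W_{F^\circ(x)/2}(x)$ outside $\Omega$. So everything rests on the local estimate $F(Du)(x_0)\le C(n,F)/r$. The paper gives no proof of this lemma and simply imports it from \cite[Corollary 1.5]{GWZ26}, so a Huisken--Ilmanen interior estimate is the natural route.

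\textbf{The local estimate does not close as written.}

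\emph{Wrong test function.} A maximum of $\eta/H_F$ bounds $1/H_F$ from above. That is a \emph{lower} bound for $H_F$, the direction of Section~\ref{sec3}, not the upper bound you need. Moreover $\mathscr{L}(H_F^{-1})=|h|_g^2H_F^{-3}\ge 0$, which is the wrong sign at a maximum. You must maximize $f=\eta H_F$, using $\mathscr{L}H_F=-2|\nabla H_F|_g^2H_F^{-3}-|h|_g^2H_F^{-1}$ from \eqref{equ-amc}.

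\emph{Wrong absorbing term.} The anisotropic error cannot be absorbed by $|\nabla H_F|_g^2/H_F^3$. At a critical point of $f$, that term is used up exactly cancelling the cross term $-2H_F^{-2}\langle\nabla\eta,\nabla H_F\rangle_g$. The error actually comes from the Gauss formula. For a fixed ambient function $\phi$, the identity behind \eqref{s3.a-vanish}, namely $g^{ik}(A_{pik}+A_{ikp})=-h^{kl}Q_{kpl}$, gives
\[
(\Delta+g^{ik}A_{pik}\nabla^p)\phi=g^{ij}D^2\phi(X_i,X_j)-H_F\langle D\phi,\nu_F\rangle-h^{kl}Q_{kpl}\nabla^p\phi .
\]
Take $\phi=|X-X_0|^2$ and $\eta=r^2-\phi$. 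At a maximum of $f$ this yields
\[
0\le-\frac{\eta|h|_g^2}{H_F}+C(F)\Bigl(r+\frac{1}{H_F}+\frac{r|h|_g}{H_F}\Bigr).
\]
The last term is linear in the full $h$, not in $H_F$. Absorb it into $-\eta|h|_g^2/H_F$ by Young's inequality; this leaves $Cr^2/(\eta H_F)=Cr^2/f$, which is harmless where $f$ is large. Then $|h|_g^2\ge H_F^2/(n+1)$ (the lifted level sets are $(n+1)$-dimensional) gives $f\le C(n,F)r$. Hence $H_F\le C/r$ at the center.

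\textbf{The cutoff must be a fixed ambient function.} A $\nu_F$-dependent choice such as $G(\nu_F)(X-x_0,X-x_0)$ produces the last two terms of \eqref{equ-evolutiong}. These are quadratic in $h$ and carry no factor $\eta$, so they cannot be absorbed by $-\eta|h|_g^2/H_F$ near $\partial\{\eta>0\}$. With an ambient cutoff no $T$-term appears at all.

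Also, $(F^\circ)^2$ is only $C^{1,1}$ at the center. It is simpler to use Euclidean balls in $\mathbb R^{n+2}$ for the lifted translating flow of $\sqrt{F(p)^2+p_{n+2}^2}$, which costs only a factor $C(F)$ in the radius. Such a ball avoids $\partial\Omega\times\mathbb R$ and meets the translating level sets only during a bounded time interval, so the maximum is interior.

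With these corrections your argument closes. The passage $\varepsilon\to0$ then follows from weak-$*$ convergence of $Du_\varepsilon$ together with convexity of the sublevel sets of $F$.
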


    \begin{theorem}[{see \cite[Theorem 1.6]{GWZ26}}]
    \label{thm:anisotropic-asymptotic}
        Let $u$ be the weak solution of IAMCF \eqref{eq:WIAMCF} starting from an open bounded set $\Omega\subset\mathbb{R}^{n+1}$ with smooth boundary $\partial\Omega$. Then
	   \begin{equation*}
		  u(x) = n\log F^{\circ}(x)+\log\left( \dfrac{|\partial\mathcal{W}|_F}{|\partial\Omega^*|_F} \right) + o(1) \quad \text{as} \ F^\circ(x) \to \infty,
	   \end{equation*}
        where $\Omega^*$ is the strictly outward $F$-minimizing hull of $\Omega$. Moreover, the expanding Wulff shape is the only solution to \eqref{eq:WIAMCF} on $\mathbb{R}^{n+1}\backslash\{0\}$ with compact level sets.
    \end{theorem}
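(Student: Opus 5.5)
The statement immediately above is Theorem~\ref{thm:anisotropic-asymptotic}, quoted from \cite{GWZ26}. Within this paper it is used as an input. A proof of it would follow the isotropic strategy of Huisken--Ilmanen \cite{HI01}, adapted to the anisotropic setting.

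\emph{Step 1 (two-sided comparison).} Compare $u$ with the expanding Wulff shape solutions. The function $v_{r}(x)=n\log(F^\circ(x)/r)$ solves \eqref{WIAMCF} on $\mathbb{R}^{n+1}\setminus\overline{\mathcal W_r}$. Its level sets are the Wulff shapes $\partial\mathcal W_{re^{t/n}}$, which move by \eqref{IAMCF} because $H_F=n/\rho$ on $\partial\mathcal W_\rho$. Choose $\mathcal W_{r_1}\subset\Omega\subset\mathcal W_{r_2}$ and apply Lemma~\ref{lem-weakcom} together with Lemma~\ref{lem-smweak}. This gives
\[
n\log F^\circ(x)-C\le u(x)\le n\log F^\circ(x)+C .
\]

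\emph{Step 2 (blow-down).} Set $u_\lambda(x)=u(\lambda x)-n\log\lambda$. The gradient bound of Lemma~\ref{lem:weak-gradient} is scale invariant, so the $u_\lambda$ are locally uniformly Lipschitz on $\mathbb{R}^{n+1}\setminus\{0\}$. By Step~1 they are also locally bounded there. Weak solutions of \eqref{eq:WIAMCF} are stable under locally uniform convergence, by the compactness argument of \cite{HI01}, which carries over to the convex integrand $F$. Hence every subsequential limit is a weak solution on $\mathbb{R}^{n+1}\setminus\{0\}$ with compact level sets.

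\emph{Step 3 (rigidity).} Prove the uniqueness claim: any such solution $w$ equals $n\log F^\circ(x)+c$. Let
\[
c^\pm=\lim_{\rho\to0^+}\ \sup/\inf_{\partial\mathcal W_\rho}\bigl(w-n\log F^\circ\bigr).
\]
These limits are finite by Step~1. Then compare $w$ with the Wulff solutions started from the level sets $\{w<t\}$ and from small Wulff shapes, using Lemma~\ref{lem-weakcom} on annuli. The conclusion is that the oscillation of $w-n\log F^\circ$ is nonincreasing in $\rho$ and is simultaneously scale invariant. This forces the oscillation to be constant in $\rho$.

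The degenerate equality case is then excluded by an anisotropic monotonicity quantity. One option is the growth rate $|\partial E_t|_F=e^t|\partial E_0^{*}|_F$ for the minimizing-hull level sets. Another is the anisotropic isoperimetric inequality, whose equality case characterizes Wulff shapes. Either way one obtains $w=n\log F^\circ+c$.

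\emph{Step 4 (the constant).} Identify $c$ by area growth. For weak IAMCF, $|\partial E_t|_F=e^t|\partial\Omega^*|_F$ for $t>0$, by the anisotropic analogue of the exponential area growth in \cite{HI01}. Under the blow-down, the level sets converge to Wulff shapes, and the area passes to the limit because the level sets are outward minimizing, so no area is lost in the limit. Equating the two gives $e^{-c}|\partial\mathcal W|_F=|\partial\Omega^*|_F$, which is the stated constant. Since the limit is unique, the full family $u_\lambda$ converges, and this is exactly the $o(1)$ expansion.

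The \textbf{main obstacle} is Step~3, the rigidity of solutions on the punctured space. The anisotropic Geroch/Hawking-type monotonicity is not available in a clean form. I would first try to close this step using comparison with Wulff barriers plus the sharp anisotropic isoperimetric inequality.
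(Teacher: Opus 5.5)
\emph{Note:} the paper does not prove this statement. It imports it from \cite{GWZ26}, so your outline has to stand on its own.

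\textbf{Steps 1, 2 and 4 are sound in outline.} The Wulff barriers give $|u-n\log F^\circ|\le C$, and the gradient bound is scale invariant, so blow-down limits exist. Once a limit is known to be $n\log F^\circ+c$, two facts pin down $c$. The first is lower semicontinuity of $P_F$. The second is outward minimality of the level sets, which stay inside a slightly larger Wulff shape. Together they give $e^{t-c}|\partial\mathcal W|_F=e^t|\partial\Omega^*|_F$, which is the stated constant.

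\textbf{The gap is Step 3.} This step carries both the uniqueness assertion and the identification of the limit. Let $r_\pm(t)$ be the outer and inner Wulff radii of $E_t$ about $0$. Re-running Wulff barriers from intermediate level sets via Lemma \ref{lem-weakcom} yields only this: $e^{-t/n}r_-(t)$ is nondecreasing and $e^{-t/n}r_+(t)$ is nonincreasing. For a blow-down limit $w$ these quantities are frozen, $r^w_\pm(t)=\rho_\pm e^{t/n}$ for all $t\in\mathbb R$, where $\rho_\pm$ are the monotone limits for the original $u$. The freezing comes from those limits, not from any scale invariance of an arbitrary $w$.

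So you are left with a solution satisfying $\mathcal W_{\rho_-e^{t/n}}\subset\{w<t\}\subset\mathcal W_{\rho_+e^{t/n}}$, with both inclusions sharp at every time. Nothing in your argument excludes $\rho_-<\rho_+$. Applying the non-strict comparison principle again only reproduces the same inclusions. Ruling out the touching would need a strict comparison or strong maximum principle for weak solutions, which you neither state nor prove.

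\textbf{Neither tool you name closes this.}
\begin{itemize}
\item The identity $|\partial E_t|_F=e^tA$, with $A>0$ constant, holds for every weak solution, so it cannot single out Wulff shapes. Combined with outward minimality and the two inclusions, it only gives $\rho_-^n|\partial\mathcal W|_F\le A\le\rho_+^n|\partial\mathcal W|_F$. This is perfectly consistent with $\rho_-<\rho_+$.
\item The anisotropic isoperimetric inequality does have a Wulff equality case. However, you give no mechanism that forces equality.
\end{itemize}

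\textbf{What is missing.} You need a scale-invariant quantity that is monotone along weak IAMCF and is constant only on expanding Wulff shapes. Its monotonicity and its equality case must be established for weak level sets, which in general are only $C^{1,\alpha}$ and carry $H_F=F(Du)$ only almost everywhere. Two candidates:
\begin{itemize}
\item The isoperimetric ratio $|\partial E_t|_F^{n+1}/|E_t|^n$. It becomes nonincreasing if you apply an anisotropic Heintze--Karcher inequality $\int_{\partial E}F(\nu)/H_F\,d\mathcal H^n\ge\frac{n+1}{n}|E|$ to $\frac{d}{dt}|E_t|\ge\int_{\partial E_t}F(\nu)/F(Du)\,d\mathcal H^n$. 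On a frozen blow-down the ratio is constant, which forces equality in Heintze--Karcher and hence a Wulff shape.
\item A Minkowski-type quantity such as $e^{-\frac{n-1}{n}t}\int H_F\,d\mu_F$.
\end{itemize}
Without such an ingredient, Step 3 is an assertion, and Step 4 inherits the gap.

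\textbf{A smaller point.} For an arbitrary solution on $\mathbb R^{n+1}\setminus\{0\}$, finiteness of $c^\pm$ as $\rho\to0$ does not follow from Step 1, which concerns $u$ only. You need a separate a priori bound, for instance a scale-invariant interior gradient estimate.
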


	\section{Huisken--Ilmanen type Harnack estimate}
	\label{sec3}
	
	In this section, we prove Theorem~\ref{prop-harnack}. The estimate shows that the upper bound for $1/H_F$ depends only on the initial star-shaped constants $R_1$, $R_2$, the dimension $n$, and the anisotropy $F$. In particular, the lower bound of $H_F$ along the flow \eqref{IAMCF} is independent of the initial curvature.

	The proof employs the maximum principle applied to a suitably chosen auxiliary function, with estimates performed at its space-time maximum point. This section proceeds in three parts. We first bound the anisotropic support function under strict star-shapedness. Next, we derive the evolution equation for $G(\nu_F)(X,X)$, the anisotropic analogue of $|X|^2$. Finally, we combine these results to prove Theorem~\ref{prop-harnack}.

	The following evolution equations along smooth IAMCF \eqref{IAMCF} are standard.
	\begin{lemma}[see {\cite[Proposition 4.1]{Xia17}}]
		Along IAMCF \eqref{IAMCF}, some quantities of smooth solutions $M_t$ with $H_F>0$ evolve by
		\begin{align}		
		\partial_t \nu_F = &\dfrac{\nabla H_F}{H_F^2}, \label{equ-nuf} \\
			\partial_t \mathrm{d}\mu_F =  &\mathrm{d}\mu_F, \label{s2.evl-dmu}\\ 
			\partial_t H_F = &\dfrac{1}{H_F^2}\left( \Delta H_F + g^{ik}A_{pik}\nabla^p H_F \right) - 2 \dfrac{|\nabla H_F|_{g}^2}{H_F^3} - \dfrac{|h|_{g}^2}{H_F},
			\label{equ-amc}\\
			\partial_t \chi = &\dfrac{1}{H_F^2}\left( \Delta\chi + g^{ik}A_{pik}\nabla^p \chi \right) + \dfrac{|h|_{g}^2}{H_F^2}\chi.
			\label{equ-asupp} 
        \end{align}	
\end{lemma}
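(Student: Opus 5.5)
We would derive all four identities from the first variation of a general anisotropic normal deformation $\partial_t X = f\,\nu_F$, and then specialize to $f = 1/H_F$. Every quantity involved is geometric, so we may work in coordinates that are normal for $g$ at a point and exploit the tangential freedom.

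\emph{Step 1: the anisotropic normal and the area element.} Since $\nu_F\in\partial\mathcal W$, we have $G(\nu_F)(\nu_F,\partial_t\nu_F)=0$, so $\partial_t\nu_F$ is tangent to $M_t$. Differentiating $G(\nu_F)(\nu_F,\partial_i X)=0$ in $t$ gives
\[
Q(\nu_F)(\partial_t\nu_F,\nu_F,\partial_iX) + G(\nu_F)(\partial_t\nu_F,\partial_iX) + G(\nu_F)(\nu_F,\partial_i(f\nu_F)) = 0 .
\]
The $Q$-term vanishes by \eqref{equ-q}. By the Weingarten formula \eqref{weinformula}, $\partial_i\nu_F$ is tangent, so the last term reduces to $\partial_i f$. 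Hence $\partial_t\nu_F=-\nabla f$, and for $f=1/H_F$ this is \eqref{equ-nuf}.

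For the area element, use $\mathrm d\mu_F=F(\nu)\,\mathrm d\mu$ together with the first variation of Euclidean area, or equivalently \eqref{equ-wamc} with $V=f\nu_F$. Both give $\partial_t\,\mathrm d\mu_F = fH_F\,\mathrm d\mu_F$, which is \eqref{s2.evl-dmu} when $f=1/H_F$.

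\emph{Step 2: the anisotropic mean curvature.} We would first establish the general formula
\[
\partial_t h^j_i = -\nabla^j\nabla_i f - f(h^2)^j_i - g^{jk}A_{pik}\nabla^p f .
\]
To do so, differentiate $\partial_i\nu_F = h_i^k\partial_kX$ in $t$, insert $\partial_t\nu_F=-\nabla f$, and expand $\partial_i\nabla f$ using the Gauss formula \eqref{gaussformula}; the extra $A$-terms come from the Christoffel-type correction $g^{k\ell}A_{ij\ell}$ there. As a consistency check, setting $f=-H_F$ reproduces \eqref{equ-aniwei}, which is the known AMCF formula, so the signs and $A$-terms can be fixed by comparison with it.

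Tracing this formula gives
\[
\partial_tH_F = -\Delta f - g^{ik}A_{pik}\nabla^pf - f|h|_g^2 .
\]
Substituting $f=1/H_F$ and using
\[
\nabla\nabla (1/H_F) = -\frac{\nabla\nabla H_F}{H_F^2} + \frac{2\,\nabla H_F\otimes\nabla H_F}{H_F^3}
\]
yields \eqref{equ-amc}.

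\emph{Step 3: the support function.} Since $\chi=\langle DF^\circ(\nu_F),X\rangle$, differentiating gives
\[
\partial_t\chi = G(\nu_F)(\partial_t\nu_F,X) + G(\nu_F)(\nu_F,f\nu_F),
\]
where the $Q$-term again vanishes by \eqref{equ-q}. Therefore
\[
\partial_t\chi = \frac{\langle\nabla H_F,X^\top\rangle_g}{H_F^2} + \frac1{H_F}.
\]
The remaining ingredient, and the main computational point, is the elliptic identity
\[
\Delta\chi + g^{ik}A_{pik}\nabla^p\chi = \langle\nabla H_F,X^\top\rangle_g + H_F - |h|_g^2\,\chi .
\]
To prove it, compute $\nabla_i\chi = h_i^k\,G(\nu_F)(\partial_kX,X)$, differentiate once more using the Gauss and Weingarten formulas, and apply the Codazzi equation \eqref{codazziequ} to convert $\nabla_kh_{ij}$ into $\nabla H_F$. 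The $C$- and $A$-terms produced in this way must combine exactly into the drift term $g^{ik}A_{pik}\nabla^p\chi$.

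Substituting this identity into the expression for $\partial_t\chi$, the two $1/H_F$ terms cancel and \eqref{equ-asupp} follows. We expect the careful bookkeeping of the $Q$- and $A$-terms in this last identity to be the main obstacle. If it becomes unwieldy, we would instead cite the corresponding identity from \cite{Xia17}.
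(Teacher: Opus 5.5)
Your proposal is correct. The paper gives no proof of this lemma and only cites \cite[Proposition 4.1]{Xia17}, and your route is that standard derivation: first the general variation $\partial_t X=f\nu_F$, giving $\partial_t\nu_F=-\nabla f$, $\partial_t\mathrm{d}\mu_F=fH_F\,\mathrm{d}\mu_F$ and $\partial_t H_F=-\Delta f-g^{ik}A_{pik}\nabla^p f-f|h|_g^2$, then $f=1/H_F$ together with the elliptic identity for $\chi$.

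The bookkeeping you flag as the main obstacle does close. Set $\omega_\ell=G(\nu_F)(X_\ell,X)$ and decompose $X=g^{k\ell}\omega_k X_\ell+\chi\nu_F$. The Codazzi piece $-h^{\ell k}g^{ij}A_{\ell ji}\omega_k$ cancels the drift term. The remaining $A$-terms combine as $h^{ab}(A_{kab}+A_{abk})\omega^k=-(h^2)^{b\ell}Q_{b\ell k}\omega^k$, and this cancels the term $(h^2)^{b\ell}Q(\nu_F)(X_b,X_\ell,X)$ in $\Delta\chi$ by \eqref{equ-q} and \eqref{Atensor}. This is the same mechanism as the cancellation of terms (a) and (b) in the paper's computation for $G(\nu_F)(X,X)$.
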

	
	\subsection{Bounds on the anisotropic support function}

	\begin{lemma}
		\label{lem-anisosupp}
		If the initial hypersurface $M_0$ satisfies
		\begin{equation*}
			0 < R_1 \leq \chi \leq R_2,
		\end{equation*}
		then the solution $M_t$ of IAMCF \eqref{IAMCF} satisfies
		\begin{equation}
			\label{equ-asest}
			R_1 \mathrm{e}^{\frac{t}{n}} \leq \chi \leq F^{\circ}(X) \leq R_2 \mathrm{e}^{\frac{t}{n}}
		\end{equation}
		on $M_t$ for every time for which the smooth solution exists.
	\end{lemma}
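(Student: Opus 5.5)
The middle inequality $\chi\le F^\circ(X)$ holds pointwise on any hypersurface by \eqref{equ-uf0}, which rests on the anisotropic Cauchy--Schwarz inequality \eqref{anisocauchy}. It therefore remains to prove the lower bound for $\chi$ and the upper bound for $F^\circ(X)$. I would prove both with the maximum principle, comparing against the expanding Wulff shapes $\partial\mathcal W_{r\mathrm e^{t/n}}(0)$, which solve \eqref{IAMCF} exactly since $H_F=n/r$ on $\partial\mathcal W_r$.

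\emph{Lower bound for $\chi$.} I will use the scale-invariant quantity $\chi H_F$. Along the Wulff solution it equals $n$, and by the trace inequality $|h|_g^2\ge H_F^2/n$. Combining \eqref{equ-amc} and \eqref{equ-asupp} should give a parabolic inequality of the form
\begin{equation*}
\partial_t(\chi H_F)\le \frac{1}{H_F^2}\left(\Delta+g^{ik}A_{pik}\nabla^p\right)(\chi H_F)+\langle b,\nabla(\chi H_F)\rangle_g,
\end{equation*}
for some vector field $b$, which would force $\max(\chi H_F)$ to be nonincreasing. However, this gives an upper bound, not the lower bound I need, so I switch to a direct argument. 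Set $f=\chi\,\mathrm e^{-t/n}$. From \eqref{equ-asupp},
\begin{equation*}
\partial_t f=\frac{1}{H_F^2}\left(\Delta f+g^{ik}A_{pik}\nabla^p f\right)+\left(\frac{|h|_g^2}{H_F^2}-\frac1n\right)f.
\end{equation*}
Since $|h|_g^2\ge H_F^2/n$, the zeroth-order coefficient is nonnegative. As long as $f>0$, at a spatial minimum the Laplacian and gradient terms have the right sign, so $\min f$ is nondecreasing. By a continuity argument, $f\ge R_1$ for all time, i.e.\ $\chi\ge R_1\mathrm e^{t/n}$.

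\emph{Upper bound for $F^\circ(X)$.} Here I use comparison with Wulff shapes. Initially $\max_{M_0}F^\circ(X)$ must be related to $R_2$. For a star-shaped closed hypersurface, take a point $p$ where $F^\circ(X)$ attains its maximum $\rho$. Then $M_0$ is tangent from inside to $\partial\mathcal W_\rho$ at $p$, so the normals agree: $\nu=\nu_{\mathcal W}(X)$, i.e.\ $\nu_F=X/\rho$. Hence $\chi=G(\nu_F)(\nu_F,X)=\rho$, and so $\rho\le R_2$. Thus $M_0\subset\overline{\mathcal W_{R_2}}$.

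The same tangency argument applies at every time: at the maximum point of $F^\circ(X)$ on $M_t$ we have $\chi=F^\circ(X)$. To propagate the bound, I compute the evolution of $F^\circ(X)$ at a maximum point. There, $\partial_t X=\nu_F/H_F$, so
\begin{equation*}
\frac{d}{dt}\max F^\circ=\frac{\langle DF^\circ(X),\nu_F\rangle}{H_F}=\frac{1}{H_F},
\end{equation*}
because $\nu_F=X/F^\circ(X)$ at that point. At an interior tangency from inside with $\partial\mathcal W_\rho$, comparison of anisotropic principal curvatures gives $H_F\ge n/\rho$. Hence $\frac{d}{dt}\rho\le \rho/n$ (interpreted in the Hamilton sense of a Lipschitz maximum), so $\rho(t)\le R_2\mathrm e^{t/n}$.

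The main obstacle is justifying the curvature comparison $H_F\ge n/\rho$ at the touching point in the anisotropic setting. I would do this by writing both hypersurfaces as local graphs over the common tangent plane and using the graphical formula $H_F=u_{ij}D^iD^jF$, together with $D^2u\ge D^2u_{\mathcal W}$ in the appropriate sign convention and ellipticity of $D^2F$. Equivalently, one can avoid this by invoking comparison of smooth solutions with the Wulff-shape solution (Lemma \ref{lem-weakcom} via Lemma \ref{lem-smweak}).
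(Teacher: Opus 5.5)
Your proposal is correct and follows essentially the same route as the paper. For the lower bound, the paper applies the maximum principle to $\chi$ using $|h|_g^2\ge H_F^2/n$, and your normalization $f=\chi\,\mathrm e^{-t/n}$ is equivalent to that. For the upper bound, both arguments use tangency with $\partial\mathcal W_\rho$ at a maximum of $F^\circ(X)$: this gives $\chi=F^\circ(X)\le R_2$ initially and $\frac{d}{dt}\rho\le\rho/n$ by Hamilton's trick. Your graphical justification of $H_F\ge n/\rho$ at the touching point fills in a step the paper only asserts. The alternative you suggest via Lemma \ref{lem-weakcom} is unnecessary, and it would require a localized comparison, since the smooth flow is only a weak solution in the region it foliates.
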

	
	\begin{proof}
		We first prove the lower bound of $\chi$. By the inequality $|h|_{g}^2\geq {H_F^2}/{n}$,  the evolution equation \eqref{equ-asupp} of the anisotropic support function $\chi$ gives
		\begin{equation*}
			\partial_t \chi \geq \dfrac{1}{H_F^2}\left( \Delta \chi + g^{ik}A_{pik}\nabla^p \chi \right) + \dfrac{1}{n} \chi.
		\end{equation*}
		Since initially $\chi\geq R_1$, the maximum principle implies the lower bound
		\begin{equation*}
			\chi\geq R_1\mathrm{e}^{\frac{t}{n}}\ \ \mbox{on}\  M_t.
		\end{equation*}
		
		Next, we derive an upper bound for the anisotropic distance $F^{\circ}(X)$.  Along the flow \eqref{IAMCF}, the evolution equation of $F^{\circ}(X)$ is given by
		\begin{equation}
			\partial_t \left(  F^{\circ}(X) \right) = DF^{\circ}(X)\cdot \partial_t X = \dfrac{DF^{\circ}(X)\cdot \nu_F}{H_F}.
			\label{equ-F0evo}
		\end{equation}
		Let $\alpha(t)=\max\limits_{M_t}F^{\circ}(X)$ be the maximum value of the anisotropic distance on $M_t$ at time $t$, which is positive, and let $P$ be a maximizing point. Then $M_t$ lies inside the scaled Wulff shape $\alpha(t)\mathcal{W}$ and is tangent to it at $P$ from the inside. The comparison implies that at the point $P$, 
		\begin{equation}
			H_F \geq n/\alpha(t).
			\label{equ-atP1}
		\end{equation}
		Moreover, the anisotropic normal of $M_t$ at $P$ coincides with the radial direction
		\begin{equation*}
			\nu_F(P)=\dfrac{P}{F^{\circ}(P)},
		\end{equation*}
		which follows from the 1-homogeneity of $F^{\circ}$. Using property \eqref{fprop1} we compute
		\begin{equation}
			DF^{\circ}(P)\cdot \nu_F(P) = DF^{\circ}(P)\cdot \dfrac{P}{F^{\circ}(P)}=1.
			\label{equ-atP2}
		\end{equation}
		Combining \eqref{equ-F0evo}, \eqref{equ-atP1} and \eqref{equ-atP2}, Hamilton's maximum principle \cite[Lemma 3.5]{Ha86} gives,
		\begin{equation}\label{s3.dalphat}
			\frac{d}{dt}\alpha(t)\leq\frac{1}{n}\alpha(t).
		\end{equation}
		At a point where $F^\circ(X)$ attains its initial maximum, $M_0$ is tangent from the inside to a Wulff shape centered at the origin. At this point $\nu_F=X/F^\circ(X)$, and hence $\chi=F^\circ(X)$. Therefore $\max_{M_0}F^\circ(X)=\max_{M_0}\chi\leq R_2$. The inequality \eqref{s3.dalphat} implies
		\begin{equation*}
			F^{\circ}(X)\leq R_2\mathrm{e}^{\frac{t}{n}}\ \ \ \ \mbox{on}\ M_t.
		\end{equation*}		
		Finally, by the inequality $\chi \leq F^{\circ}(X)$ (see \eqref{equ-uf0}), we complete the proof.
	\end{proof}
	
	\subsection{\texorpdfstring{Evolution equation of $G(\nu_F)(X,X)$}{Evolution equation of G(nuF)(X,X)}}
	
	\begin{lemma}
		Along the flow \eqref{IAMCF}, the quantity $G(\nu_F)(X,X)$ evolves by
		\begin{align}
			&\left( \partial_t - \dfrac{1}{H_F^2}\Delta - \dfrac{1}{H_F^2} g^{ik}A_{pik}\nabla^p  \right)G(\nu_F)(X,X)\notag\\
			=& \dfrac{4\chi}{H_F}-\dfrac{2n}{H_F^2} -\dfrac{2}{H_F^2}h^{p\ell}Q(\nu_F)(X_p,X_{\ell},X) \notag\\
			&+\dfrac{1}{H_F^2}h^s_ph^{pk}g^{q\ell}Q_{kqs}Q(\nu_F)(X_{\ell},X,X) \nonumber\\
			&- \dfrac{1}{H_F^2} h^{p\ell}h^k_p T(\nu_F)(X_k,X_{\ell},X,X), \label{equ-evolutiong}
		\end{align}
		where we denote $h^{ij}=g^{ik}h_{k\ell}g^{\ell j}$.
	\end{lemma}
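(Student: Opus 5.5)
We compute $\partial_t$, $\nabla$ and $\Delta$ of the function $\Phi:=G(\nu_F)(X,X)$ directly, using the chain rule for $G$ as a function of $\nu_F$ together with the structural identities \eqref{fprop1}, \eqref{fprop3} and \eqref{equ-q}. Throughout we write $X_i=\partial_iX$ and work at a point in normal coordinates for $g$.

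\emph{Time derivative.} By \eqref{IAMCF} and \eqref{equ-nuf},
\begin{equation*}
\partial_t\Phi = Q(\nu_F)\Big(\frac{\nabla H_F}{H_F^2},X,X\Big)+2G(\nu_F)\Big(X,\frac{\nu_F}{H_F}\Big)
= \frac{1}{H_F^2}\nabla^kH_F\,Q(\nu_F)(X_k,X,X)+\frac{2\chi}{H_F}.
\end{equation*}
Here $\nabla H_F=\nabla^kH_F\,X_k$ as a tangent vector, and $G(\nu_F)(\nu_F,X)=\chi$.

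\emph{First derivative.} By the Weingarten formula \eqref{weinformula}, $\partial_i\nu_F=h_i^kX_k$. Hence
\begin{equation*}
\nabla_i\Phi = h_i^k Q(\nu_F)(X_k,X,X)+2G(\nu_F)(X_i,X).
\end{equation*}

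\emph{Second derivative.} Differentiating once more, we use the Gauss formula \eqref{gaussformula} for $\partial_i\partial_jX$ and $\partial_iX=X_i$. The term $2G(\nu_F)(X_i,X_j)$ traces to $2n$. The term $2G(\nu_F)(\partial_i\partial_jX,X)$ contributes $-2h_{ij}\chi$, which traces to $-2H_F\chi$, plus $2g^{k\ell}A_{ij\ell}G(\nu_F)(X_k,X)$. Differentiating $G(\nu_F)$ in the factor $2G(\nu_F)(X_i,X)$ gives $2h_j^\ell Q(\nu_F)(X_\ell,X_i,X)$. From the $Q$ term we obtain:
\begin{itemize}
\item $\nabla_jh_i^k\,Q(\nu_F)(X_k,X,X)$, which after tracing and applying the Codazzi equation \eqref{codazziequ} becomes $\nabla^kH_F\,Q(\nu_F)(X_k,X,X)$ plus $C$-terms;
\item a term $h_i^kh_j^\ell T(\nu_F)(X_\ell,X_k,X,X)$;
\item terms $h_i^kQ(\nu_F)(\partial_jX_k,X,X)$, in which the normal part $-h_{jk}\nu_F$ drops out by \eqref{equ-q};
\item $2h_i^kQ(\nu_F)(X_k,X_j,X)$.
\end{itemize}

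\emph{Assembly.} Forming $\partial_t\Phi-H_F^{-2}(\Delta\Phi+g^{ik}A_{pik}\nabla^p\Phi)$, the $\nabla H_F$ terms cancel. The $\chi$ terms give $\frac{2\chi}{H_F}+\frac{2\chi}{H_F}=\frac{4\chi}{H_F}$, the constant gives $-\frac{2n}{H_F^2}$, the two $Q(X_p,X_\ell,X)$ contributions combine to $-\frac{2}{H_F^2}h^{p\ell}Q(\nu_F)(X_p,X_\ell,X)$, and the $T$ term appears with coefficient $-H_F^{-2}$.

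\emph{Main obstacle.} The key step is the bookkeeping of the remaining lower-order terms. These are the $A$-terms from the Gauss formula, the $C$-terms from Codazzi, and the drift term $g^{ik}A_{pik}\nabla^p\Phi$. Using the explicit form \eqref{Atensor} of $A$, the expression $g^{k\ell}A_{ij\ell}G(\nu_F)(X_k,X)$ combines with the drift's $G(X_p,X)$ part and cancels. The $Q$-parts similarly combine, via the symmetry of $Q$ and the tangential decomposition of $X$, into the single term $\frac{1}{H_F^2}h^s_ph^{pk}g^{q\ell}Q_{kqs}Q(\nu_F)(X_\ell,X,X)$. This cancellation is exactly where the tensor contraction must be checked carefully, and it is the part of the computation we expect to be delicate.
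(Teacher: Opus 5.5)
Your route is the same as the paper's: compute $\partial_t$, $\nabla$ and $\Delta$ of $G(\nu_F)(X,X)$ using the Gauss--Weingarten and Codazzi equations and \eqref{equ-q}. Your time derivative, first derivative and list of second-order terms are all correct. The gap is in the assembly, exactly at the step you flag as delicate. Two of your claims there are false, and they only happen to compensate each other.

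First, each of the contributions $2h_j^\ell Q(\nu_F)(X_\ell,X_i,X)$ and $2h_i^kQ(\nu_F)(X_k,X_j,X)$ traces to $2h^{p\ell}Q(\nu_F)(X_p,X_\ell,X)$. Together they put $-\frac{4}{H_F^2}h^{p\ell}Q(\nu_F)(X_p,X_\ell,X)$ into the evolution, not $-\frac{2}{H_F^2}h^{p\ell}Q(\nu_F)(X_p,X_\ell,X)$.

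Second, the terms $2g^{pq}g^{k\ell}A_{pqk}G(\nu_F)(X_\ell,X)$ (from the Gauss formula) and $2g^{ik}g^{p\ell}A_{pik}G(\nu_F)(X_\ell,X)$ (from the drift) do not cancel each other. From \eqref{Atensor} and the symmetry of $Q$ one gets $A_{pik}+A_{ikp}=-h_i^mQ_{kpm}$, so their sum is $-2h^{km}Q_{kpm}g^{p\ell}G(\nu_F)(X_\ell,X)$. Writing $X=g^{k\ell}G(\nu_F)(X,X_k)X_\ell+\chi\nu_F$ and using \eqref{equ-q}, this is exactly $-2h^{p\ell}Q(\nu_F)(X_p,X_\ell,X)$. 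That cancels one of the two contributions above. This is the paper's term $(a)$ in \eqref{s3.a-vanish}. It is here, not in the $Q(\nu_F)(X_\ell,X,X)$ terms, that the tangential decomposition of $X$ is needed.

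For the $Q(\nu_F)(X_\ell,X,X)$ terms, the correct bookkeeping is as follows. The Codazzi correction $-g^{pq}h^{\ell s}A_{sqp}$ cancels the drift's $Q$-part $g^{ik}A_{pik}h^{p\ell}$. Two pieces remain: $g^{m\ell}h^{ps}A_{smp}$ from Codazzi, and $g^{q\ell}h^{pk}A_{pkq}$ from the tangential part of $\partial_jX_k$ inside the $Q$-term. By \eqref{Atensor} and the symmetry of $Q$ and $h$, they combine into $-h^s_ph^{pk}g^{q\ell}Q_{kqs}$. After multiplying by $-H_F^{-2}$, this gives the stated term.

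With these corrections your computation reproduces \eqref{equ-evolutiong}. As written, however, the cancellation you describe would fail if carried out. The final coefficient of $h^{p\ell}Q(\nu_F)(X_p,X_\ell,X)$ comes out right only because of the compensating miscount.
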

	\begin{proof}
		We compute at a fixed point and choose an orthonormal basis with respect to $\nabla$ around that point. We will start with the time derivative, then work out the first and second covariant derivatives, take the Laplacian, add the first-order term, handle the derivative of $Q(\nu_F)(X_{\ell},X,X)$, and finally put everything together to obtain the evolution equation \eqref{equ-evolutiong}.
		
		First, using the flow equation \eqref{IAMCF} and evolution equation \eqref{equ-nuf} for $\nu_F$, we obtain
		\begin{align}
			\partial_t \left( G(\nu_F)(X,X) \right) =& 2 G(\nu_F)(\partial_t X,X) + 	Q(\nu_F)(\partial_t\nu_F,X,X) \notag\\
			=& 2 \dfrac{\chi}{H_F} + \dfrac{1}{H_F^2}\nabla^{\ell} H_F Q(\nu_F)(X_\ell,X,X).  \label{equ-ptg}
		\end{align}
		Next, the Weingarten formula \eqref{weinformula} gives the first covariant derivative:
		\begin{align}
			\nabla_p\left( G(\nu_F)(X,X) \right) =& 2G(\nu_F)(X_p,X) + Q(\nu_F)(\partial_p \nu_F,X,X) \notag\\
			=& 2G(\nu_F)(X_p,X) + h^{\ell}_p Q(\nu_F)(X_{\ell},X,X). \label{equ-dpg}
		\end{align}		
		For the second covariant derivative, we employ the Gauss-Weingarten formula \eqref{gaussformula}, \eqref{weinformula} and the Codazzi equation \eqref{codazziequ}. A direct computation gives
		\begin{align}
			&\nabla_q\nabla_p\left( G(\nu_F)(X,X) \right) \notag\\
			=& 2G(\nu_F)(\partial_q\partial_p X,X) + 2g_{pq} + 2h_q^{\ell}Q(\nu_F)(X_{\ell},X_p, X) \notag\\
			&+ \nabla_qh^{\ell}_p Q(\nu_F)(X_{\ell},X,X) + h^{\ell}_p \nabla_q \left(Q(\nu_F)(X_{\ell},X,X)\right) \notag\\
			=& -2h_{pq}\chi + 2g^{k\ell}A_{pqk}G(\nu_F)(X_{\ell},X)\nonumber\\
			&+2g_{pq} + 2h_q^{\ell}Q(\nu_F)(X_{\ell},X_p, X)\notag\\
			&+\left( \nabla^{\ell}h_{pq} + g^{m\ell}h^s_q A_{smp} - h^{\ell s}A_{sqp} \right)Q(\nu_F)(X_{\ell},X,X) \notag\\
			& + h^{\ell}_p\nabla_q\left( Q(\nu_F)(X_{\ell},X,X) \right). \label{equ-hessg}
		\end{align}
		
		Taking the trace of \eqref{equ-hessg} with respect to the metric $g$ yields
		\begin{align}
			\Delta\left( G(\nu_F)(X,X) \right) 
			=& g^{pq}\nabla_q\nabla_p\left( G(\nu_F)(X,X) \right) \notag\\
			=& -2 H_F \chi + 2n + 2g^{pq}g^{k\ell} A_{pqk}G(\nu_F)(X_{\ell},X) \nonumber\\
			&+ 2h^{p\ell}Q(\nu_F)(X_{\ell},X_p,X)\notag\\
			&+\left( \nabla^{\ell}H_F + g^{m\ell}h^{sp} A_{smp} - g^{pq}h^{\ell s}A_{sqp} \right)Q(\nu_F)(X_{\ell},X,X) \notag\\
			&+h^{p\ell}\nabla_p\left( Q(\nu_F)(X_{\ell},X,X) \right). \label{equ-laplaciang}
		\end{align}
		Combining the two equations \eqref{equ-dpg} and \eqref{equ-laplaciang}, we find
		\begin{align}
			&\Delta\left( G(\nu_F)(X,X) \right) + g^{ik}A_{pik}\nabla^p\left( G(\nu_F)(X,X) \right) \notag\\
			=& -2H_F \chi + 2n + \nabla^{\ell}H_F Q(\nu_F)(X_{\ell},X,X) \nonumber\\
			& \underbrace{+ 2 h^{p\ell}Q(\nu_F)(X_{\ell},X_p,X) +\left( 2g^{pq}g^{k\ell}A_{pqk} + 2g^{ik}g^{p\ell}A_{pik} \right)G(\nu_F)(X_{\ell},X) }_{(a)} \nonumber\\
			& + g^{m\ell}h^{ps}A_{smp} Q(\nu_F)(X_{\ell},X,X) + h^{p\ell}\nabla_p\left( Q(\nu_F)(X_{\ell},X,X) \right). \label{s3.Delta-G}
		\end{align}
		The term $(a)$ vanishes. Indeed, using \eqref{equ-q}, the decomposition of $X$
		\begin{equation*}
			X=g^{k\ell}G(\nu_F)(X,X_k)X_{\ell}+G(\nu_F)(X,\nu_F)\nu_F,
		\end{equation*}
		the symmetry of $Q$ and the definition of $A_{ijk}$ (see \eqref{Atensor}), one finds
		\begin{equation}\label{s3.a-vanish}
			(a) = 2\left( A_{pik} + A_{ikp} + h_{i}^mQ_{kpm} \right) g^{p\ell}g^{ik}G(\nu_F)(X_{\ell},X) =0.
		\end{equation}
	To handle the $\nabla Q$ term in \eqref{s3.Delta-G}, we apply property \eqref{equ-q} and the Gauss-Weingarten formula \eqref{gaussformula}  \eqref{weinformula} to obtain
		\begin{align}
			h^{p\ell}\nabla_p\left( Q(\nu_F)(X_{\ell},X,X) \right) =& h^{p\ell}Q(\nu_F)(\partial_p\partial_{\ell} X,X,X) \notag\\
			&+ 2h^{p\ell}Q(\nu_F)(X_{\ell},X_p,X) \notag\\
			&+ h^{p\ell}T(\nu_F)(\partial_p \nu_F,X_{\ell},X,X) \notag\\
			=& g^{q\ell}h^{pk}A_{pkq}Q(\nu_F)(X_{\ell},X,X) \nonumber\\
			&+ 2h^{p\ell}Q(\nu_F)(X_{\ell},X_{p},X) \notag\\
			& + h^{p\ell}h^k_p T(\nu_F)(X_k,X_{\ell},X,X). \label{equ-dq}
		\end{align}
        Substituting \eqref{s3.a-vanish} and \eqref{equ-dq} into \eqref{s3.Delta-G} gives
		\begin{align*}
			& \Delta\left( G(\nu_F)(X,X) \right) + g^{ik}A_{pik}\nabla^p\left( G(\nu_F)(X,X) \right) \notag\\
			=& -2H_F \chi + 2n + \nabla^{\ell}H_F Q(\nu_F)(X_{\ell},X,X) \nonumber\\
			& + \underbrace{\left(g^{m\ell}h^{ps}A_{smp}+g^{q\ell}h^{pk}A_{pkq}\right)Q(\nu_F)(X_{\ell},X,X)}_{(b)} \nonumber\\
			& + 2h^{p\ell}Q(\nu_F)(X_{\ell},X_{p},X) + h^{p\ell}h^k_p T(\nu_F)(X_k,X_{\ell},X,X).
		\end{align*}
		Using the explicit expression of $A_{ijk}$ in \eqref{Atensor}, the term $(b)$ simplifies to
		\begin{equation*}
			(b)	= -h^s_p h^{pk}g^{q\ell} Q_{kqs}Q(\nu_F)(X_{\ell},X,X).
		\end{equation*}
		Therefore,
		\begin{align}
			& \Delta\left( G(\nu_F)(X,X) \right) + g^{ik}A_{pik}\nabla^p\left( G(\nu_F)(X,X) \right) \notag\\
			=& -2H_F \chi + 2n + \nabla^{\ell}H_F Q(\nu_F)(X_{\ell},X,X) \nonumber\\
			& -h^s_p h^{pk}g^{q\ell} Q_{kqs}Q(\nu_F)(X_{\ell},X,X) \nonumber\\
			& + 2h^{p\ell}Q(\nu_F)(X_{\ell},X_{p},X) + h^{p\ell}h^k_p T(\nu_F)(X_k,X_{\ell},X,X). \label{equ-hess}
		\end{align}		
		Finally, combining \eqref{equ-ptg} and \eqref{equ-hess}, we obtain \eqref{equ-evolutiong}. 
	\end{proof}
	
	\subsection{Proof of Theorem \ref{prop-harnack}}
	
		Since \eqref{equ-asest} holds for all times under consideration, we set $w:=\chi^{-1}$ and consider the auxiliary function
		\begin{equation}
			\label{equ-W}
			W:= \dfrac{\varphi^{1-\varepsilon}(w)\mathrm{e}^{\gamma G(\nu_F)(X,X)}}{H_F},
		\end{equation}
		where $\varphi:=\varphi(w)$ is a function to be chosen, and $\gamma>0$, $\varepsilon\in(0,1)$ are constants to be determined later.
		
		The main idea is to apply the parabolic maximum principle at a maximum point of $\ln(tW)$ on $M\times[0,T]$. This converts the initial star-shaped bounds $R_1 \leq \chi \leq R_2$ into an explicit upper bound for $1/H_F$. The exponential growth of $\chi$ (see \eqref{equ-asest}) then propagates the estimate to all times. We divide the proof into several steps.
		
		\textbf{Step 1. Evolution equation for $\ln W$.} Denote the operator
		\begin{equation*}
			\mathscr{L}:=\partial_t  - \dfrac{1}{H_F^2}\Delta- \dfrac{1}{H_F^2}g^{ik}A_{pik}\nabla^p.
		\end{equation*}
		By \eqref{equ-asupp},
		\begin{equation*}
			\mathscr{L} w = - \dfrac{|h|_{g}^2}{H_F^2}w - \dfrac{2|\nabla w|_{g}^2}{w H_F^2}.
		\end{equation*}
		Hence on $\{\varphi\neq 0\}$ we have
		\begin{align}
			\mathscr{L} \ln\varphi =& -\dfrac{|h|_{g}^2}{H_F^2}\dfrac{\varphi' w}{\varphi} - \dfrac{|\nabla w|_{g}^2}{H_F^2}\left( 2\dfrac{\varphi'}{w\varphi} + \dfrac{\varphi''}{\varphi} - \dfrac{\varphi'^2}{\varphi^2} \right). \label{equ-lnphi0}
		\end{align}
		To simplify the gradient terms in \eqref{equ-lnphi0}, we choose
		\begin{equation*}
			\varphi(s):= \dfrac{s}{2R_1^{-1}-s}.
		\end{equation*}
		Then $\varphi:=\varphi(w)$ is well defined since
		\begin{equation}
			R_2^{-1}\mathrm{e}^{-\frac{t}{n}} \leq w \leq R_1^{-1}\mathrm{e}^{-\frac{t}{n}} \leq R_1^{-1}.
			\label{equ-west}
		\end{equation}
		Under the notation $\varphi'=\varphi'(w)$ and $\varphi''=\varphi''(w)$, and after a direct computation, we have
		\begin{equation}
			\dfrac{\varphi' w}{\varphi} = \dfrac{2}{2-w R_1}\ \ \text{and}\ \ 2\dfrac{\varphi'}{w\varphi} + \dfrac{\varphi''}{\varphi} - \dfrac{\varphi'^2}{\varphi^2} = \dfrac{\varphi'^2}{\varphi^2}.
			\label{equ-phideri}
		\end{equation}
		Combining \eqref{equ-lnphi0} and \eqref{equ-phideri} yields
		\begin{equation}
			\mathscr{L}\ln\varphi = -\dfrac{|h|_{g}^2}{H_F^2}\dfrac{\varphi' w}{\varphi} -\dfrac{1}{H_F^2}\dfrac{|\nabla\varphi|^2_{g}}{\varphi^2}. \label{equ-lnphi}
		\end{equation}
		Moreover, evolution equation \eqref{equ-amc} implies
		\begin{equation}
			\mathscr{L} \ln H_F = -\dfrac{|\nabla H_F|^2_{g}}{H_F^4} - \dfrac{|h|_{g}^2}{H_F^2}.
			\label{equ-lnhf}
		\end{equation}
		Therefore, combining \eqref{equ-evolutiong}, \eqref{equ-lnphi} and \eqref{equ-lnhf}, we obtain
		\begin{align}
			\mathscr{L} \ln W =& \left[ \dfrac{|\nabla H_F|^2_{g}}{H_F^4}+\dfrac{|h|_{g}^2}{H_F^2} \right] - (1-\varepsilon) \left[ \dfrac{|h|_{g}^2}{H_F^2}\dfrac{\varphi' w}{\varphi} + \dfrac{1}{H_F^2}\dfrac{|\nabla\varphi|^2_{g}}{\varphi^2}  \right] \notag\\
			&+\gamma\left( \dfrac{4}{H_F w}-\dfrac{2n}{H_F^2} \right) - \dfrac{2\gamma}{H_F^2}h^{p\ell}Q(\nu_F)(X_p,X_{\ell},X)\notag\\
			&+ \gamma\dfrac{1}{H_F^2}h^s_ph^{pk}g^{q\ell}Q_{kqs}Q(\nu_F)(X_{\ell},X,X)\nonumber\\
			&- \gamma\dfrac{1}{H_F^2} h^{p\ell}h^k_p T(\nu_F)(X_k,X_{\ell},X,X).  \label{equ-lnw}
		\end{align}
		
		\textbf{Step 2. Gradient estimate at a critical point.}
		Suppose the positive maximum of $\tilde{W}:=tW$ on $M\times[0,T]$ is attained at some point $(p_0,t_0)$ with $t_0>0$. If $t_0=T$, time derivatives below are understood from the left. The same parabolic maximum-principle inequality holds. At this maximum point we have
		\begin{equation*}
			\nabla \ln W(p_0,t_0)=0.
		\end{equation*}
		This implies
		\begin{equation*}
			0 = \dfrac{\nabla W}{W} = (1-\varepsilon)\dfrac{\nabla\varphi}{\varphi} + \gamma \dfrac{\nabla\left( \mathrm{e}^{G(\nu_F)(X,X)} \right)}{\mathrm{e}^{G(\nu_F)(X,X)}} + \dfrac{\nabla H_F^{-1}}{H_F^{-1}}.
		\end{equation*}
		By the Cauchy--Schwarz inequality, we obtain
		\begin{align}
			\left|\dfrac{\nabla H_F^{-1}}{H_F^{-1}}\right|^2_{g} =& \left| (1-\varepsilon)\dfrac{\nabla\varphi}{\varphi} + \gamma\dfrac{\nabla\left( \mathrm{e}^{G(\nu_F)(X,X)} \right)}{\mathrm{e}^{G(\nu_F)(X,X)}} \right|_{g}^2 \notag\\
			=& (1-\varepsilon)^2\left| \dfrac{\nabla\varphi}{\varphi} \right|_{g}^2 + \gamma^2\left| \dfrac{\nabla\left( \mathrm{e}^{G(\nu_F)(X,X)} \right)}{\mathrm{e}^{G(\nu_F)(X,X)}} \right|^2_{g} \nonumber\\
			&+ 2(1-\varepsilon)\gamma\left\langle \dfrac{\nabla\varphi}{\varphi},\dfrac{\nabla\left( \mathrm{e}^{G(\nu_F)(X,X)} \right)}{\mathrm{e}^{G(\nu_F)(X,X)}}  \right\rangle_{g} \notag\\
			\leq& \bigg( (1-\varepsilon)^2+\varepsilon(1-\varepsilon) \bigg)\left| \dfrac{\nabla\varphi}{\varphi} \right|_{g}^2 \notag\\
			&+ \left(1+\dfrac{1-\varepsilon}{\varepsilon}\right)\gamma^2\left| \dfrac{\nabla\mathrm{e}^{G(\nu_F)(X,X)}}{\mathrm{e}^{G(\nu_F)(X,X)}} \right|_{g}^2 \notag\\
			=& (1-\varepsilon)  \dfrac{|\nabla\varphi|_{g}^2}{\varphi^2} + \varepsilon^{-1}\gamma^2 \dfrac{|\nabla\mathrm{e}^{G(\nu_F)(X,X)}|_{g}^2}{\mathrm{e}^{2G(\nu_F)(X,X)}}.  \label{equ-nablahf-1}
		\end{align}
		Therefore, at $(p_0,t_0)$, substituting \eqref{equ-nablahf-1} into \eqref{equ-lnw} and using \eqref{equ-phideri} gives
		\begin{align}
			\mathscr{L} \ln W \leq& -\left( \dfrac{wR_1-2\varepsilon}{2-wR_1} \right)\dfrac{|h|_{g}^2}{H_F^2} +\gamma\left( \dfrac{4}{H_F w}-\dfrac{2n}{H_F^2} \right) \nonumber\\
			&+ \varepsilon^{-1}\gamma^2\dfrac{|\nabla G(\nu_F)(X,X)|_{g}^2}{H_F^2} \notag\\
			&- \gamma\dfrac{2}{H_F^2}\underbrace{h^{p\ell}Q(\nu_F)(X_p,X_{\ell},X)}_{(I)} \nonumber\\
			&+ \gamma\dfrac{1}{H_F^2}\underbrace{h^s_ph^{pk}g^{q\ell}Q_{kqs}Q(\nu_F)(X_{\ell},X,X)}_{(II)} \notag\\
			&- \gamma\dfrac{1}{H_F^2} \underbrace{h^{p\ell}h^k_p T(\nu_F)(X_k,X_{\ell},X,X)}_{(III)}.
			\label{12}
		\end{align}
		
		\textbf{Step 3. Estimates for the anisotropic terms.}
		We estimate the anisotropic terms. By \eqref{equ-dpg},
		\begin{align}
			|\nabla G(\nu_F)(X,X)|_{g}^2
			=& g^{pq}\nabla_p\left( G(\nu_F)(X,X) \right) \nabla_q\left( G(\nu_F)(X,X) \right) \notag\\
			=&  4\underbrace{g^{pq}G(\nu_F)(X_p,X)G(\nu_F)(X_q,X)}_{(IV)} \nonumber\\
			&+ \underbrace{4h^{p\ell} G(\nu_F)(X_p,X) Q(\nu_F)(X_{\ell},X,X)}_{(V)} \notag\\
			& + \underbrace{h^{q\ell}h^s_q Q(\nu_F)(X_{\ell},X,X)Q(\nu_F)(X_s,X,X)}_{(VI)}.  \label{345}
		\end{align}
		
		Since $F$ is a Minkowski norm and $F^\circ(\nu_F)=1$, the vector $\nu_F$ lies on the compact Wulff shape $\partial\mathcal{W}$. Hence, all derivatives of $F^\circ$ and hence the associated tensors $G$, $Q$, $T$ are uniformly bounded on $\partial\mathcal{W}$, yielding a constant $C(n,F)$ that depends only on $n$ and $F$. In the following estimates, $C(n,F)$ denotes such a constant, which may change from line to line but always depends only on $n$ and $F$. We also use the equivalence of norms in finite dimension:
		\begin{equation*}
			|Y|\leq C(F)F^\circ(Y)\qquad\text{for all }Y\in\mathbb R^{n+1}.
		\end{equation*}
		Together with \eqref{equ-asest}, this gives $|X|\leq C(F)R_2\mathrm{e}^{T/n}$ on $M\times[0,T]$. In a $g$-orthonormal frame the tangent vectors $X_i$ have Euclidean length bounded by a constant depending only on $F$, because $G(\nu_F)$ is uniformly equivalent to the Euclidean metric on $\partial\mathcal W$. Using these facts, we first bound $(IV)$:
		\begin{align}
			\left|(IV)\right|=&g^{pq}G(\nu_F)(X_p,X)G(\nu_F)(X_q,X) \nonumber\\
			\leq& G(\nu_F)(X,X) \notag\\
			\leq& C(n,F)|X|^2 \nonumber\\
			\leq& C(n,F)R_2^2 \mathrm{e}^{2\frac{T}{n}}. \label{3est}
		\end{align}
		For the remaining terms, we apply \eqref{equ-asest} together with the Cauchy--Schwarz inequality:
		\begin{align}	\label{1245est}
				\left|(I)\right| \leq&~  C(n,F)|h|_{g}|X|
				\leq ~C(n,F)\varepsilon\gamma^{-1}|h|_{g}^2+ C(n,F)\varepsilon^{-1}\gamma R_2^2\mathrm{e}^{2\frac{T}{n}},\nonumber\\
			\left|	(II)\right|, \left|(III)\right| \leq &~C(n,F) |h|^2_{g} |X|^2 \leq ~C(n,F) |h|^2_{g} R_2^2\mathrm{e}^{2\frac{T}{n}},\nonumber\\
				\left|(V)\right| \leq &~C(n,F) |h|_{g} |X|^3 
				\leq ~C(n,F) |h|^2_{g} R_2^4\mathrm{e}^{4\frac{T}{n}} + C(n,F)R_2^2\mathrm{e}^{2\frac{T}{n}},\nonumber\\
				\left|(VI)\right| \leq &~C(n,F) |h|^2_{g} |X|^4
				\leq ~C(n,F) |h|^2_{g} R_2^4\mathrm{e}^{4\frac{T}{n}}.
		\end{align}
		Here the estimates for $(I)$ involve the parameters $\varepsilon,\gamma$ to be chosen later.
		
		Now we combine the evolution equation \eqref{12} with the bounds above \eqref{345}, \eqref{3est} and \eqref{1245est}. At $(p_0,t_0)$ we obtain
		\begin{align}
			\mathscr{L} \ln W \leq& \underbrace{ \left(-\dfrac{wR_1-2\varepsilon}{2-wR_1} + C_1\varepsilon + C_2\gamma R_2^2 \mathrm{e}^{2\frac{T}{n}} + \varepsilon^{-1}\gamma^2 C_3R_2^4\mathrm{e}^{4\frac{T}{n}} \right)}_{(A)}\dfrac{|h|_{g}^2}{H_F^2} \notag\\
			&+\gamma\underbrace{\left( \dfrac{4}{H_F w}-\dfrac{2n}{H_F^2} + \dfrac{\varepsilon^{-1}\gamma C_4R_2^2\mathrm{e}^{2\frac{T}{n}}}{H_F^2} \right)}_{(B)}, \label{equ-lnww}
		\end{align}
		where $C_1,C_2,C_3,C_4$ are constants depending only on $n$ and $F$.
		
		\textbf{Step 4. Parameter choice.}
		We first require that $\varepsilon$ and $\gamma$ satisfy
		\begin{equation}
			\varepsilon^{-1}\gamma C_4(n,F)R_2^2\mathrm{e}^{2\frac{T}{n}}=n.
			\label{equ-epsilongamma}
		\end{equation}
		Then the term $(B)$ becomes
		\begin{align*}
			(B) =& - \dfrac{n}{H_F^2} + \dfrac{4}{H_F w} 
			\leq  -\dfrac{n}{2H_F^2} + \dfrac{8}{n}R_2^2\mathrm{e}^{2\frac{T}{n}}, 
		\end{align*}
		where we used the Cauchy--Schwarz inequality and \eqref{equ-asest} to obtain
		\begin{equation*}
			\dfrac{4}{H_F w} \leq \dfrac{8}{nw^2} + \dfrac{n}{2H_F^2} \leq \dfrac{8}{n}R_2^2\mathrm{e}^{2\frac{T}{n}} + \dfrac{n}{2H_F^2}.
		\end{equation*}
		
		We then choose appropriate $\varepsilon$ to ensure that $(A)$ is nonpositive. Note that \eqref{equ-west} implies
		\begin{equation*}
			\dfrac{R_1}{R_2\mathrm{e}^{\frac{T}{n}}}\leq wR_1 \leq 1,
		\end{equation*}
		then
		\begin{equation}
			\dfrac{wR_1-2\varepsilon}{2-wR_1}\geq \dfrac{R_1-2\varepsilon R_2\mathrm{e}^{\frac{T}{n}}}{2R_2\mathrm{e}^{\frac{T}{n}} - R_1}.
			\label{equ-wR1lbd}
		\end{equation}
		Hence \eqref{equ-epsilongamma} and \eqref{equ-wR1lbd} yields
		\begin{equation*}
			(A) \leq -\dfrac{R_1}{2R_2\mathrm{e}^{\frac{T}{n}} - R_1} + \varepsilon \left( \dfrac{2R_2\mathrm{e}^{\frac{T}{n}}}{2R_2\mathrm{e}^{\frac{T}{n}}-R_1}+C_5(n,F) \right).
		\end{equation*}
		We can choose
		\begin{equation}
			\varepsilon=\dfrac{1}{2}\dfrac{R_1}{2(1+C_5(n,F))R_2\mathrm{e}^{\frac{T}{n}}-C_5(n,F)R_1}\in (0,1)  \label{equ-epsilon}
		\end{equation}
		such that $(A)\leq 0$.
	
        Therefore, at $(p_0,t_0)$,
		\begin{equation}
			\mathscr{L}\ln W \leq \gamma\left( -\dfrac{n}{2H_F^2} + \dfrac{8}{n}R_2^2\mathrm{e}^{2\frac{T}{n}} \right). \label{equ-lnwcrit}
		\end{equation}
		
		\textbf{Step 5. Maximum principle argument.}
		We now apply the maximum principle to the function $\tilde{W}=tW$ on $M\times[0,T]$. Suppose that the positive maximum of $\tilde{W}$ is attained at some point $(p_0,t_0)$ with $t_0>0$. Using \eqref{equ-lnwcrit} at $(p_0,t_0)$, we have
		\begin{equation}
			0 \leq \mathscr{L}\ln \tilde{W}(p_0,t_0) \leq  \gamma\left( -\dfrac{n}{2H_F^2} + \dfrac{8}{n}R_2^2\mathrm{e}^{2\frac{T}{n}} \right) + \dfrac{1}{t_0}.
			\label{tildew}
		\end{equation}
		Since $\varphi$ is increasing in $w$, inequality \eqref{equ-west} implies
		\begin{equation*}
			\dfrac{R_1}{2R_2\mathrm{e}^{\frac{T}{n}}} \leq \varphi\left( \left(R_2\mathrm{e}^{\frac{T}{n}}\right)^{-1} \right) \leq\varphi(w) \leq \varphi(R_1^{-1}) = 1.
		\end{equation*}
		Furthermore, the choice of $\gamma$ in \eqref{equ-epsilongamma}, the bound \eqref{equ-asest}, and the equivalence of $G(\nu_F)$ with the Euclidean metric give
		\begin{equation*}
			0\leq \gamma G(\nu_F)(X,X)
			\leq C(n,F)\gamma R_2^2\mathrm{e}^{2T/n}
			\leq C(n,F)\varepsilon
			\leq C(n,F).
		\end{equation*}
		Hence
		\begin{equation}
			1\leq \mathrm{e}^{2\gamma G(\nu_F)(X,X)}\leq C(n,F).
			\label{equ-GGGbd}
		\end{equation}
		Combining the definition \eqref{equ-W}, \eqref{tildew}--\eqref{equ-GGGbd}, and the choice of $\varepsilon$ in \eqref{equ-epsilon}, we deduce
		\begin{align*}
			\tilde{W}^2(p_0,t_0)\leq & C(n,F)\left(R_2\mathrm{e}^{\frac{T}{n}}\right)^2\left( T^2 + \varepsilon^{-1}T \right) \nonumber\\
			\leq & C(n,F)\left(R_2\mathrm{e}^{\frac{T}{n}}\right)^2\left( T^2 + \dfrac{R_2}{R_1}\mathrm{e}^{\frac{T}{n}}T \right)\nonumber\\
			\leq & C(n,F) \left(R_2 \mathrm{e}^{\frac{T}{n}}\right)^2\dfrac{R_2}{R_1}\mathrm{e}^{\frac{T}{n}}T^2\left( 1 + \dfrac{1}{T} \right). \nonumber
		\end{align*}
		Consequently, for any $p\in M_T$,
		\begin{align*}
			\dfrac{1}{H_F^2}(p,T)T^2\left(\dfrac{R_1}{2R_2\mathrm{e}^{\frac{T}{n}}} \right)^{2-2\varepsilon} \leq& \tilde{W}^2(p,T) \\
			\leq & C(n,F) \left(R_2 \mathrm{e}^{\frac{T}{n}}\right)^2 \dfrac{R_2}{R_1}\mathrm{e}^{\frac{T}{n}}T^2\left( 1 + \dfrac{1}{T} \right).
		\end{align*}
		Therefore, for every $T>0$ and every $p\in M_T$, we have the estimate
		\begin{equation}
			\dfrac{1}{H_F^2}(p,T) \leq C(n,F)\left( \dfrac{R_2}{R_1}\mathrm{e}^{\frac{T}{n}} \right)^{3-2\varepsilon}\left( R_2 \mathrm{e}^{\frac{T}{n}}\right)^2 \left(1+\dfrac{1}{T}\right).
			\label{equ-hf}
		\end{equation}
		
		\textbf{Step 6. Complete the proof.}
		The derivation of \eqref{equ-hf} uses only the support bounds at the initial time $t=0$, namely, $R_1\leq\chi\leq R_2$. However, \eqref{equ-asest} tells us that for any later time $s\geq 0$, the anisotropic support function satisfies
		\begin{equation*}
			R_1\mathrm{e}^{\frac{s}{n}}\leq \chi\leq R_2\mathrm{e}^{\frac{s}{n}}.
		\end{equation*}
		Thus we may restart the flow at time $s$ using these new bounds and apply the same argument on the time interval $[s,s+\tau]$. This yields an estimate that depends on the bounds at time $s$ rather than at time $0$: for any $s\geq 0$ and any $\tau>0$, we have
		\begin{equation*}
			\frac{1}{H_F^2}(p,s+\tau)\leq C(n,F)\left(\frac{R_2\mathrm{e}^{\frac{s}{n}}}{R_1\mathrm{e}^{\frac{s}{n}}}\mathrm{e}^{\frac{\tau}{n}}\right)^{3-2\varepsilon_{\tau}}
			\left(R_2\mathrm{e}^{\frac{s+\tau}{n}}\right)^2\left(1+\frac{1}{\tau}\right),
		\end{equation*}
		where the parameter $\varepsilon_{\tau}$ is now given by
		\begin{align*}
			\varepsilon_{\tau}=& \frac{1}{2}\,\frac{R_1\mathrm{e}^{\frac{s}{n}}}{2(1+C_5)R_2\mathrm{e}^{\frac{s}{n}}\mathrm{e}^{\frac{\tau}{n}}-C_5R_1\mathrm{e}^{\frac{s}{n}}}\\
			=& \frac{1}{2}\,\frac{R_1}{2(1+C_5)R_2\mathrm{e}^{\frac{\tau}{n}}-C_5R_1}\in (0,1),
		\end{align*}
		which is independent of $s$. Hence we obtain the estimate
		\begin{equation}
			\dfrac{1}{H_F^2}(p,s+\tau) \leq C(n,F)\left( \dfrac{R_2}{R_1}\mathrm{e}^{\frac{\tau}{n}} \right)^{3-2\varepsilon_{\tau}} \left( R_2 \mathrm{e}^{\frac{s+\tau}{n}} \right)^2 \left( 1+ \dfrac{1}{\tau} \right).
			\label{equ-hfbound}
		\end{equation}
		This is exactly the same form as \eqref{equ-hf} but with $T$ replaced by $\tau$ and an extra factor $\mathrm{e}^{\frac{2s}{n}}$ in the term $\bigl(R_2\mathrm{e}^{\frac{s+\tau}{n}}\bigr)^2$. Note that the right-hand side depends on $s$ only through this explicit exponential factor.

		We now fix an arbitrary final time $t>0$ and consider two cases to obtain the desired Harnack inequality \eqref{harnackest}. Notice that $R_2\geq R_1$ and $0<3-2\varepsilon_\tau<3$. When $0<\tau\leq1$, the extra factor $\mathrm e^{(3-2\varepsilon_\tau)\tau/n}$ is absorbed into $C(n,F)$, and $(R_2/R_1)^{3-2\varepsilon_\tau}\leq(R_2/R_1)^3$.
		
		\textit{Case 1: $0<t\leq 1$.} Applying \eqref{equ-hfbound} with $s=0$ and $\tau=t$ gives
		\begin{equation}
			\dfrac{1}{H_F^2}(p,t)\leq C(n,F)\left(\dfrac{R_2}{R_1}\right)^{3}\left( R_2 \mathrm{e}^{\frac{t}{n}} \right)^2 \left( 1+\dfrac{1}{t} \right).
			\label{equ-case1}
		\end{equation}
		For $0 < t \leq 1$, we have the elementary estimate
		\begin{equation*}
			\sqrt{1+ \dfrac{1}{t}} \leq \sqrt{\dfrac{2}{t}} \leq \sqrt{2}\left( 1 + \dfrac{1}{t^{\frac{1}{2}}} \right).
		\end{equation*}
		Taking the square root in \eqref{equ-case1} and applying the above inequality yields
		\begin{equation*}
			\frac{1}{H_F}(p,t)\leq C(n,F)\left(\frac{R_2}{R_1}\right)^{\frac{3}{2}} R_2\mathrm{e}^{\frac{t}{n}}\left(1+\frac{1}{t^{\frac{1}{2}}}\right).
		\end{equation*}
		
		\textit{Case 2: $t>1$.} Take $s=t-1>0$ and $\tau=1$. Then $s+\tau=t$ and \eqref{equ-hfbound} becomes
		\begin{equation}
			\dfrac{1}{H_F^2}(p,t)\leq C(n,F)\left(\dfrac{R_2}{R_1}\right)^{3}\left( R_2 \mathrm{e}^{\frac{t}{n}} \right)^2.
			\label{equ-case2}
		\end{equation}
		Taking the square root in \eqref{equ-case2} and noting that $1 \leq 1 + 1/\sqrt{t}$ for $t > 1$, we obtain the same inequality
		\begin{equation*}
			\frac{1}{H_F}(p,t)\leq C(n,F)\left(\frac{R_2}{R_1}\right)^{\frac{3}{2}} R_2\mathrm{e}^{\frac{t}{n}}\left(1+\frac{1}{t^{\frac{1}{2}}}\right).
		\end{equation*}
		This completes the proof of Theorem \ref{prop-harnack}.

	\section{Global smooth solutions under weak initial conditions}
	\label{sec4}
	
	In this section we prove Theorem \ref{higher-our}. 	We first construct a family of strictly $F$-mean convex hypersurfaces to approximate the initial data. These approximating hypersurfaces are then employed as initial values for the classical inverse anisotropic mean curvature flow. Finally, we pass to the limit to complete the proof of Theorem \ref{higher-our}. 
	
	\subsection{Approximation lemma}\label{sec.4-1}
	By mollifying local graphs and evolving by anisotropic mean curvature flow, we obtain smooth star-shaped hypersurfaces that approximate the original $C^1$ data while keeping $H_F$ uniformly bounded and strictly positive.

\begin{lemma}
\label{lem-approximate}
Let $F$ be a Minkowski norm, and let $M_0\subset\mathbb{R}^{n+1}$ be a closed, embedded, strictly
star-shaped hypersurface of class $C^1$ satisfying
\begin{equation*}
 0<R_1\leq \chi_0\leq R_2
\end{equation*}
for some positive constants $R_1$ and $R_2$. Assume that $M_0$ has nonnegative weak anisotropic mean curvature satisfying
\begin{equation}
\label{eq:initial-H-approx}
 0\leq H_F\leq\Lambda
 \qquad\text{for }\mathcal H^n\text{-a.e. on }M_0
\end{equation}
for some fixed constant $\Lambda>0$. Then there exist
$\varepsilon_0>0$ and a family of smooth closed hypersurfaces
$\{M_0^\varepsilon\}_{0<\varepsilon<\varepsilon_0}$ such that:
\begin{enumerate}
 \item[\textup{(a)}] $  M_0^\varepsilon\longrightarrow M_0$ as $\varepsilon\downarrow0$ in $C^1$.

 \item[\textup{(b)}]
 Each $M_0^\varepsilon$ is strictly star-shaped and, after decreasing
 $\varepsilon_0$ if necessary,
 \begin{equation}
 \label{equ-anibd}
  \frac{R_1}{2}\leq\chi_0^\varepsilon\leq2R_2,
  \qquad
  \max_{M_0^\varepsilon}F^\circ(X)\leq2R_2.
 \end{equation}

 \item[\textup{(c)}]
 The anisotropic mean curvature of $M_0^\varepsilon$ satisfies
 \begin{equation*}
  0<H_F^\varepsilon\leq C_0\Lambda,
 \end{equation*}
 where $C_0$ is independent of $\varepsilon$.
\end{enumerate}
\end{lemma}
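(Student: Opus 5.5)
The plan follows the two-step scheme indicated in the introduction: first produce smooth approximants with uniform $C^{1,\beta}$ control, then run anisotropic mean curvature flow \eqref{AMCF} for a short time $\delta=\delta(\varepsilon)$ and take $M_0^\varepsilon$ to be the evolved hypersurface.

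\textbf{Step 1: regularity of $M_0$ and radial mollification.} Write $M_0$ radially as $X=\rho(\theta)\theta$, $\theta\in\mathbb S^n$; strict star-shapedness makes $\rho$ a $C^1$ function with $\rho$ and $|\nabla\rho|$ bounded. Locally $M_0$ is a $C^1$ graph $u$ over a tangent plane. The weak equation \eqref{equ-wamc} then says that $u$ is a weak solution of the divergence-form equation $\operatorname{div}(DF(Du-\phi^0))=H_F\in L^\infty$. This equation is uniformly elliptic because of \eqref{eq:graph-uniform-parabolicity}. De Giorgi--Nash applied to difference quotients, or equivalently $C^{1,\beta}$ regularity for bounded-right-hand-side quasilinear equations, gives $u\in C^{1,\beta}$ with bounds depending on $\Lambda$, $F$ and the $C^1$ modulus. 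Hence $\rho\in C^{1,\beta}(\mathbb S^n)$. Mollify on the sphere to get $\rho_\varepsilon\to\rho$ in $C^1$ with uniform $C^{1,\beta}$ norms. Since $\chi$ depends continuously on $(\rho,\nabla\rho)$, the bounds $\frac{R_1}{2}\le\chi\le 2R_2$ and $F^\circ(X)\le 2R_2$ hold with room to spare for small $\varepsilon$. The mollified $\tilde M^\varepsilon$ need not be $F$-mean convex, however.

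\textbf{Step 2: short-time AMCF with uniform estimates.} Run \eqref{AMCF} from $\tilde M^\varepsilon$. In local graph form this is the quasilinear equation $\partial_t u=A^{ij}(Du)u_{ij}$. Using a stopping time $T_\varepsilon$, defined as the first time the gradient of the local graphs exceeds twice its initial bound, the equation stays uniformly parabolic by \eqref{eq:graph-uniform-parabolicity}. Initial-time barriers (translated Wulff shapes, which shrink explicitly under AMCF, placed inside and outside) control the $C^0$ motion by $C\sqrt t$. Hölder gradient estimates for quasilinear parabolic equations (Ladyzhenskaya--Ural'tseva/Lieberman) with the uniform $C^{1,\beta}$ initial data give $[Du]_{\beta}$ bounds uniform in $\varepsilon$. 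Interpolation then shows that the gradient changes by $o(1)$ uniformly, so $T_\varepsilon\ge T_*>0$ independent of $\varepsilon$. Interior Schauder estimates give $|h|\le Ct^{-(1-\beta)/2}$, which is integrable in $t$, so $M_t^\varepsilon\to \tilde M^\varepsilon$ in $C^1$ as $t\to0$ uniformly in $\varepsilon$. Choosing $\delta=\delta(\varepsilon)\to0$, part (a) and part (b) follow by continuity of $\chi$ and $F^\circ(X)$ in $C^1$.

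\textbf{Step 3: curvature bounds; the main obstacle.} I expect the hardest part to be (c): upper and positive lower bounds on $H_F$ at time $\delta$ that are uniform in $\varepsilon$, even though $H_F$ of $\tilde M^\varepsilon$ is not controlled pointwise. My approach is to exploit the weak structure. Mollification in the right (graph/divergence) sense gives the estimate $\tilde H_F^\varepsilon\ge -o(1)$ and $\le C\Lambda$ only approximately, so instead I would pass to the limit $\varepsilon\to0$ of the flows. The flows $M_t^\varepsilon$ converge smoothly for $t>0$ to an AMCF $M_t$ starting from $M_0$. For this limit, the evolution \eqref{equ-amceq} is a linear parabolic equation for $H_F$ in divergence-compatible form, whose initial trace is the weak curvature in $[0,\Lambda]$, tested via \eqref{equ-wamc}. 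The weak maximum principle, applied with test functions and using \eqref{equ-amcfarea} and the integration by parts \eqref{intebyparts}, yields $0\le H_F\le C_0\Lambda$ for $t>0$. Here $C_0$ absorbs the first-order $A$-term and the conversion between Euclidean and anisotropic measures. The strong maximum principle then gives $H_F>0$ for $t>0$, since $H_F\not\equiv0$ on a closed hypersurface. Finally I take $M_0^\varepsilon:=M_{t_\varepsilon}$ with $t_\varepsilon\downarrow0$, reindexed, or else $M^{\varepsilon}_{\delta}$ for suitable small $\delta$ using the smooth convergence. Both choices are smooth, strictly $F$-mean convex, $C^1$-close to $M_0$ by Step 2, and satisfy the bounds (b) and (c).
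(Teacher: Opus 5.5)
Your Step 2 runs parallel to the paper's: stopping time, uniform parabolicity, H\"older-gradient estimates, interpolation, and a curvature blow-up rate whose square is integrable in time. Your Step 1 recovers the $C^{1,\beta}$ part of the paper's Step 1. But Step 3, which you yourself flag as the main obstacle, has a genuine gap.

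For the limit flow you only know that $M_t\to M_0$ in $C^1$ and that $|h|\le Ct^{-(1-\beta)/2}$. Combined with \eqref{equ-wamc}, this says only that $H_F(\cdot,t)$ tends to the weak curvature of $M_0$ in the sense of distributions. You have no bound on $H_F(\cdot,t)$ in $L^1$, $L^2$ or $L^p$ that is uniform as $t\downarrow0$. Any maximum-principle or energy argument on $[s,t]$ gives at best
\[
\|H_{F,-}(\cdot,t)\|_{L^2}\le C\|H_{F,-}(\cdot,s)\|_{L^2},\qquad \sup_{M_t}H_F\le C\sup_{M_s}H_F,\qquad C=\exp\Bigl(C'\int_s^t\|h\|_{L^\infty}^2\Bigr).
\]
This factor comes from the reaction term $H_F|h|_g^2$ in \eqref{equ-amceq}. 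The $A$-term is removed exactly by \eqref{intebyparts}; it is not what $C_0$ has to absorb. To let $s\downarrow0$ you would need $\|H_{F,-}(\cdot,s)\|_{L^2}\to0$ and $\limsup_{s\downarrow0}\|H_{F,+}(\cdot,s)\|_{L^p}\le\Lambda|M_0|_F^{1/p}$. These are strong-convergence statements, and distributional convergence does not give them: nothing you have proved rules out negative oscillations of $H_F(\cdot,s)$ of size up to $s^{-(1-\beta)/2}$. A duality argument with adjoint solutions fails for the same reason, since passing to the limit against merely continuous test functions needs $L^1$ bounds on $H_F(\cdot,s)$.

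The missing ingredient is second-order integrability of $M_0$ and of the approximants. The paper upgrades Step 1 as follows. Difference quotients give $u_0\in W^{2,2}_{\mathrm{loc}}$, so the equation can be written as $a^{ij}(Du_0)D_{ij}u_0=H_F$ with continuous coefficients, and the interior $W^{2,p}$ estimate gives $M_0\in W^{2,p}$ for all $p<\infty$. Radial mollification then converges in $C^{1,\beta}\cap W^{2,p}$, which is exactly what makes $H_{F,i}(\cdot,0)\to H_F$ strongly in $L^p$. Your remark that mollified curvatures are bounded only ``approximately'' is the commutator $a^{ij}(Du_\varepsilon)(D_{ij}u)_\varepsilon-(a^{ij}(Du)D_{ij}u)_\varepsilon$, and this is small in $L^p$ precisely when $D^2u\in L^p$.

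The $L^2$ estimate for $H_{F,i,-}$ and the $L^p$ estimate for $H_{F,i,+}$ are then run along the smooth approximating flows, whose initial data are genuine smooth hypersurfaces. The Gronwall constant $C_*=\exp(C\int_0^{\varepsilon_0}\|h\|_{L^\infty}^2)$ is independent of both $i$ and $p$. One lets $i\to\infty$ and then $p\to\infty$. Without the $W^{2,p}$ step and the $p$-independence of the constant, (c) does not follow.

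A smaller point: Wulff-shape barriers give only $C\sqrt t$ motion, which is not enough for the Schauder rescaling on $Q_{c\sqrt t}$. The rate $t^{-(1-\beta)/2}$ requires $|u(\cdot,t)-u(\cdot,0)|\le Ct^{(1+\beta)/2}$, that is, a parabolic $C^{1+\beta,(1+\beta)/2}$ bound up to $t=0$. The paper gets this from the barriers $\ell\pm C(|y-y_0|^2+\kappa s)^{(1+\beta_0)/2}$.
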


\begin{proof}
We divide the proof into three steps.

\medskip
\noindent
\textbf{Step 1. Elliptic regularity of $M_0$.}
Since $M_0$ is $C^1$, every point has a neighborhood in which $M_0$
is represented as a graph
\begin{equation*}
 x=y^ie_i+u_0(y)e_0
\end{equation*}
over a ball $B\subset\mathbb R^n$. We use the graph orientation fixed in Subsection~\ref{appB}, namely
\begin{equation*}
 \nu=\frac{Du_0-\phi^0}{\sqrt{1+|Du_0|^2}}.
\end{equation*}
Then the weak first variation formula \eqref{equ-wamc} is equivalent to
\begin{equation*}
 \int_B D_iF(Du_0-\phi^0)D_i\zeta\,dy
 =-\int_B H_F\zeta\,dy,
 \qquad \zeta\in C_c^\infty(B).
\end{equation*}
Thus
\begin{equation}
\label{eq:divergence-graph-H}
 \operatorname{div}\big(DF(Du_0-\phi^0)\big)=H_F
\end{equation}
weakly in $B$.

After shrinking $B$ if needed, the vector $Du_0-\phi^0$ stays in a compact subset of
$\mathbb R^{n+1}\setminus\{0\}$. Hence the matrix
\begin{equation*}
 a^{ij}(Du_0):=D^iD^jF(Du_0-\phi^0)
\end{equation*}
is uniformly elliptic and uniformly continuous. In particular, for some $\lambda>0$,
\begin{equation*}
 \lambda |\xi|^2\leq a^{ij}(Du_0)\xi_i\xi_j\leq \lambda^{-1}|\xi|^2.
\end{equation*}
The local difference-quotient theory for uniformly elliptic quasilinear equations in divergence form applies to \eqref{eq:divergence-graph-H}, see \cite[Chapter IV]{LU68}. It gives $u_0\in W_{\mathrm{loc}}^{2,2}(B)$. Therefore \eqref{eq:divergence-graph-H} can be written almost everywhere as
\begin{equation*}
 a^{ij}(Du_0)D_{ij}u_0=H_F.
\end{equation*}
This is now a linear non-divergence equation with continuous uniformly elliptic coefficients. The interior $W^{2,p}$ estimate \cite[Theorem 9.11]{GT01} gives, for every $B'\Subset B$ and $1<p<\infty$,
\begin{equation*}
 \|u_0\|_{W^{2,p}(B')}
 \leq C\bigl(1+\|H_F\|_{L^p(B)}\bigr).
\end{equation*}
Here $C$ depends on $B'\Subset B$, $p$, $F$, and the $C^1$ bound of the graph. It does not depend on any later approximation. Since $H_F\in L^\infty$, the estimate holds for every finite $p$. Taking $p>n$ and using Morrey's embedding gives
\begin{equation}
\label{eq:initial-regularity}
 M_0\in C^{1,\beta}\cap W^{2,p}
 \qquad\text{for every }0<\beta<1,\quad 1\leq p<\infty.
\end{equation}

\medskip
\noindent
\textbf{Step 2. Smoothing via anisotropic mean curvature flow.}
Strict star-shapedness allows us to write $M_0$ as a radial graph $M_0=\{r_0(z)z:z\in\mathbb S^n\}$  for a positive function $r_0$. By \eqref{eq:initial-regularity}, $r_0\in C^{1,\beta}\cap W^{2,p}$ for all $0<\beta<1$ and $p<\infty$. Radial mollification gives positive functions $r_i\in C^\infty(\mathbb S^n)$ such that
\begin{equation}
\label{eq:radial-mollification}
 r_i\longrightarrow r_0
 \qquad\text{in }C^{1,\beta}\cap W^{2,p}
\end{equation}
for every $0<\beta<1$ and $p<\infty$. Set $M_i=\{r_i(z)z:z\in\mathbb S^n\}$. In every fixed graph chart,
\begin{equation*}
 H_{F,i}(\cdot,0)=a^{ab}(Du_i(\cdot,0))D_{ab}u_i(\cdot,0),
 \qquad
 a^{ab}(q)=D^aD^bF(q-\phi^0).
\end{equation*}
Consequently, after pulling the functions and area measures back to $M_0$ by the graph parametrizations,
\begin{equation}
\label{eq:initial-H-strong}
 H_{F,i}(\cdot,0)\longrightarrow H_F(\cdot,0)
 \qquad\text{strongly in }L^p(M_0)
\end{equation}
for every finite $p$.

Fix $\beta_0\in(0,1)$. Starting from $M_i$, consider anisotropic mean curvature flow
\begin{equation*}
 \partial_\varepsilon X_i=-H_{F,i}\nu_{F,i}.
\end{equation*}
We prove that the flows exist on a time interval $[0,\varepsilon_0]$ independent of $i$. Choose finitely many triples of graph cylinders $\mathcal C^0_\alpha\Subset\mathcal C^1_\alpha\Subset\mathcal C^2_\alpha$, so that the innermost cylinders cover $M_0$. By \eqref{eq:radial-mollification}, after discarding finitely many terms, all $M_i$ are graphs in the outer cylinders with a common $C^{1,\beta_0}$ bound and fixed positive graph and lateral-boundary margins. Choose $K$ strictly larger than the common initial gradient bound. Since all $M_i$ are uniformly strictly star-shaped, choose $\delta>0$ independently of $i$, so small that whenever a radial graph $r$ satisfies $\|r-r_i\|_{C^1(\mathbb S^n)}<2\delta$,  it remains strictly star-shaped and all the fixed outer graph representations persist with gradients below $K$ and with positive graph and lateral-boundary margins.

For each $i$, let $\theta_i$ be the supremum of the times for which the smooth flow exists as a radial graph and
\begin{equation*}
 \|r_i(\cdot,s)-r_i(\cdot,0)\|_{C^1(\mathbb S^n)}<2\delta.
\end{equation*}
On $[0,\theta_i)$ every local graph function satisfies
\begin{equation}
\label{eq:local-AMCF-graph}
 \partial_\varepsilon u_i
 =A^{ab}(Du_i)D_{ab}u_i,
 \qquad
 A^{ab}(p)=F(p-\phi^0)
 D^2F|_{p-\phi^0}(\phi^a,\phi^b).
\end{equation}
By \eqref{eq:graph-uniform-parabolicity}, this equation is uniformly parabolic with constants independent of $i$ on the stopping interval. 
All estimates in this paragraph are local. They are made on one fixed outer graph cylinder and recorded on the corresponding inner cylinder. We suppress the index of the cylinder. On the stopping interval the gradients stay below $K$. Hence the coefficients $A^{ab}$ in \eqref{eq:local-AMCF-graph} are uniformly parabolic. All derivatives of $A^{ab}$ are uniformly bounded on the relevant compact set of gradient values.

The common $C^{1,\beta_0}$ bound gives, at $s=0$,
\begin{equation*}
 |u_i(y,0)-\ell_{i,y_0}(y)|\leq C|y-y_0|^{1+\beta_0},
 \qquad
 \ell_{i,y_0}(y):=u_i(y_0,0)+Du_i(y_0,0)\cdot(y-y_0).
\end{equation*}
For fixed $y_0$, consider the functions
\begin{equation*}
 \ell_{i,y_0}(y)\pm C\bigl(|y-y_0|^2+\kappa s\bigr)^{(1+\beta_0)/2}.
\end{equation*}
A direct calculation uses the uniform parabolicity of \eqref{eq:local-AMCF-graph}. It shows that, after increasing $C$ and choosing $\kappa$ large, these functions are a super-solution and a sub-solution. The choice of $C$ and $\kappa$ is independent of $i$ and $y_0$. The fixed lateral margins allow the comparison principle to be used for small $s$. We get
\begin{equation*}
 \|u_i(\cdot,s)-u_i(\cdot,0)\|_{L^\infty}
 \leq C s^{(1+\beta_0)/2}.
\end{equation*}
The scaled interior H\"older-gradient estimate for uniformly parabolic quasilinear equations \cite[Theorem~12.3]{Lie96} applies with these constants. Interpolation then gives numbers $\alpha\in(0,\beta_0]$ and $\gamma_0:=\frac{\alpha\beta_0}{1+\alpha}>0$ such that
\begin{equation}
\label{eq:C1-initial-modulus}
 \|u_i(\cdot,s)-u_i(\cdot,0)\|_{C^1}
 \leq C s^{\gamma_0/2},
 \qquad 0<s<\min\{\theta_i,\varepsilon_0\}.
\end{equation}
Here and below, $C$ and $\varepsilon_0$ are independent of $i$.

We now estimate the second derivatives. Fix a point at time $s$ and take a backward cylinder $Q_\rho$ with radius $\rho=c\sqrt{s}$. The constant $c>0$ depends only on the fixed nesting of the graph cylinders. By \eqref{eq:C1-initial-modulus} and the initial $C^{1,\beta_0}$ bound, after subtracting the initial tangent plane we have
\begin{equation*}
 \operatorname*{osc}_{Q_\rho}(u_i-\ell_{i,y_0})\leq C\rho^{1+\gamma_0}.
\end{equation*}
Since $A^{ab}$ is smooth on the compact gradient range,
\begin{equation*}
 [A^{ab}(Du_i)]_{C^{\alpha,\alpha/2}(Q_\rho)}
 \leq C[Du_i]_{C^{\alpha,\alpha/2}(Q_\rho)}.
\end{equation*}
After parabolic rescaling by $\rho$ and normalization by the above oscillation, the coefficients have uniformly bounded $C^{\alpha,\alpha/2}$ norm. The interior parabolic Schauder estimate \cite[Theorem~4.9]{Lie96} gives
\begin{equation*}
 |D^2u_i(y_0,s)|\leq C\rho^{-1+\gamma_0}.
\end{equation*}
Since $\rho=c\sqrt{s}$, we obtain
\begin{equation}
\label{eq:local-D2-smoothing}
 \|D^2u_i(\cdot,s)\|_{L^\infty}
 \leq C s^{-\frac{1-\gamma_0}{2}},
 \qquad 0<s<\min\{\theta_i,\varepsilon_0\}.
\end{equation}

Because the cover is finite, \eqref{eq:C1-initial-modulus} also gives
\begin{equation*}
 \|r_i(\cdot,s)-r_i(\cdot,0)\|_{C^1(\mathbb S^n)}
 \leq C s^{\gamma_0/2}.
\end{equation*}
Choose $\varepsilon_0>0$ so small that the right-hand side is less than $\delta$ for $0<s\leq\varepsilon_0$. Thus the radial $C^1$ stopping condition cannot occur before $\varepsilon_0$. If $\theta_i<\varepsilon_0$, then \eqref{eq:local-D2-smoothing} and the standard higher-order estimates on $[\theta_i/2,\theta_i)$ give the bounds needed for continuation. The uniformly parabolic graphical flow then extends past $\theta_i$. This is a contradiction. Hence $\theta_i\geq\varepsilon_0$ for every $i$.

Since the inner cylinders cover the hypersurface, \eqref{eq:local-D2-smoothing} gives the global curvature estimate
\begin{equation}
\label{eq:curvature-smoothing}
 \|h_{i,\varepsilon}\|_{L^\infty(M_{i,\varepsilon})}
 \leq C\varepsilon^{-\frac{1-\gamma_0}{2}},
 \qquad 0<\varepsilon\leq\varepsilon_0.
\end{equation}
For every $0<\tau<\varepsilon_0$ and every $k\geq0$, repeated interior Schauder estimates \cite[Theorem~4.9]{Lie96} and differentiation of the equation give
\begin{equation}
\label{eq:AMCF-higher-uniform}
 \sup_{\tau\leq\varepsilon\leq\varepsilon_0}
 \|M_{i,\varepsilon}\|_{C^k}\leq C(k,\tau),
\end{equation}
again uniformly in $i$.

Put $ H_{F,i,-}=\max\{-H_{F,i},0\}$ and $ H_{F,i,+}=\max\{H_{F,i},0\}$. 
Equations \eqref{equ-amceq} and \eqref{equ-amcfarea}, together with \eqref{intebyparts}, imply
\begin{align*}
 \frac{d}{d\varepsilon}\int_{M_{i,\varepsilon}}H_{F,i,-}^2\,d\mu_F
 &\leq-2\int_{M_{i,\varepsilon}}
 |\nabla H_{F,i,-}|_{g}^2\,d\mu_F \notag\\
 &\qquad+2\|h_{i,\varepsilon}\|_{L^\infty}^2
 \int_{M_{i,\varepsilon}}H_{F,i,-}^2\,d\mu_F.
\end{align*}
Because the square of the right-hand side of \eqref{eq:curvature-smoothing} is integrable at zero,
\begin{equation*}
 \int_0^{\varepsilon_0}\|h_{i,s}\|_{L^\infty}^2\,ds
 \leq \frac{C}{\gamma_0}\varepsilon_0^{\gamma_0}.
\end{equation*}
Gronwall's inequality therefore gives a constant $C_*$, independent of $i$ and $\varepsilon$, such that
\begin{equation}
\label{eq:negative-part-approximants-bound}
 \|H_{F,i,-}(\cdot,\varepsilon)\|_{L^2(M_{i,\varepsilon})}
 \leq C_*\|H_{F,i,-}(\cdot,0)\|_{L^2(M_i)}.
\end{equation}
By \eqref{eq:initial-H-strong} and $H_F(\cdot,0)\geq0$, the right-hand side tends to zero as $i\to\infty$.

Similarly, for every $p\geq2$,
\begin{align*}
 \frac{d}{d\varepsilon}\int_{M_{i,\varepsilon}}H_{F,i,+}^p\,d\mu_F
 &=-p(p-1)\int_{M_{i,\varepsilon}}H_{F,i,+}^{p-2}
 |\nabla H_{F,i,+}|_{g}^2\,d\mu_F \notag\\
 &\quad+p\int_{M_{i,\varepsilon}}H_{F,i,+}^p
 |h_{i,\varepsilon}|_{g}^2\,d\mu_F
 -\int_{M_{i,\varepsilon}}H_{F,i,+}^{p+2}\,d\mu_F.
\end{align*}
After taking the $p$-th root and applying Gronwall,
\begin{equation}
\label{eq:positive-part-approximants-bound}
 \|H_{F,i,+}(\cdot,\varepsilon)\|_{L^p(M_{i,\varepsilon})}
 \leq C_*\|H_{F,i,+}(\cdot,0)\|_{L^p(M_i)},
\end{equation}
where the same enlarged constant $C_*$ is independent of $p$. The initial convergence \eqref{eq:initial-H-strong} and \eqref{eq:initial-H-approx} yield
\begin{equation}
\label{eq:initial-positive-Lp}
 \lim_{i\to\infty}\|H_{F,i,+}(\cdot,0)\|_{L^p(M_i)}
 =\|H_F(\cdot,0)\|_{L^p(M_0)}
 \leq\Lambda |M_0|_F^{1/p}.
\end{equation}

\medskip
\noindent
\textbf{Step 3. Passing to the limit.}
The estimates \eqref{eq:AMCF-higher-uniform} give, after a diagonal subsequence, smooth convergence
\begin{equation*}
 M_{i,\varepsilon}\longrightarrow M_\varepsilon
 \qquad\text{on every compact subinterval of }(0,\varepsilon_0].
\end{equation*}
The limit solves anisotropic mean curvature flow. The initial-time modulus \eqref{eq:C1-initial-modulus}, together with $M_i\to M_0$ in $C^1$, passes to the limit and gives
\begin{equation}
\label{eq:AMCF-C1-trace}
 M_\varepsilon\longrightarrow M_0
 \qquad\text{in }C^1\quad\text{as }\varepsilon\downarrow0.
\end{equation}
The curvature estimate also passes to the limit:
\begin{equation*}
 \|h_\varepsilon\|_{L^\infty(M_\varepsilon)}^2
 \leq C\varepsilon^{-1+\gamma_0}.
\end{equation*}

Letting $i\to\infty$ in \eqref{eq:negative-part-approximants-bound} gives
\begin{equation}
\label{eq:H-nonnegative}
 H_F(\cdot,\varepsilon)\geq0
 \qquad\text{on }M_\varepsilon.
\end{equation}
Letting $i\to\infty$ in \eqref{eq:positive-part-approximants-bound} and using \eqref{eq:initial-positive-Lp}, we find, for every $p\geq2$,
\begin{equation*}
 \|H_F(\cdot,\varepsilon)\|_{L^p(M_\varepsilon,d\mu_F)}
 \leq C_*\Lambda |M_0|_F^{1/p}.
\end{equation*}
The constant is independent of $p$, so letting $p\to\infty$ gives $\sup_{M_\varepsilon}H_F\leq C_*\Lambda$. 

The inequality in \eqref{eq:H-nonnegative} is strict for positive time. Indeed, a smooth closed hypersurface cannot have identically zero anisotropic mean curvature: at a point at which $F^\circ(X)$ is maximal, comparison with the tangent Wulff shape gives
\begin{equation*}
 \max_{M_\varepsilon}H_F
 \geq\frac{n}{\max_{M_\varepsilon}F^\circ(X)}>0.
\end{equation*}
Thus $H_F(\cdot,\varepsilon/2)$ is nonnegative and not identically zero. The strong maximum principle applied to \eqref{equ-amceq} on $[\varepsilon/2,\varepsilon]$ yields
\begin{equation*}
 H_F(\cdot,\varepsilon)>0
 \qquad\text{on }M_\varepsilon.
\end{equation*}

Finally, \eqref{eq:AMCF-C1-trace} and the continuity of the anisotropic support function imply the estimates in \eqref{equ-anibd} for small $\varepsilon$. Setting $M_0^\varepsilon:=M_\varepsilon$ and $C_0:=C_*$ completes the proof.
\end{proof}


	\subsection{From approximants to a global smooth solution} 

\begin{proof}[Proof of Theorem \ref{higher-our}]
Under the assumption,  choose a constant $\Lambda>0$ such that
		\begin{equation*}
			0\leq H_F \leq \Lambda \qquad \text{a.e.\ on}\ M_0.
		\end{equation*}
		By Lemma \ref{lem-approximate}, there exists a sequence of smooth, strictly star-shaped, strictly $F$-mean convex closed hypersurfaces $M_0^{\varepsilon}$ for $\varepsilon>0$ small satisfying
		\begin{equation*}
			\dfrac{R_1}{2} \leq \chi^{\varepsilon}_0 \leq \max\limits_{M_0^{\varepsilon}}F^{\circ}(X) \leq 2R_2
		\end{equation*}
		and $0< H_F^{\varepsilon} \leq C_0\Lambda$,  where $C_0$ is independent of $\varepsilon$. Fix each $\varepsilon>0$, let $X^{\varepsilon}:M\times(0,\infty)\to\mathbb R^{n+1}$
		be the smooth solution to IAMCF \eqref{IAMCF} with initial hypersurface $M_0^{\varepsilon}$, and set $M_t^{\varepsilon}=X^{\varepsilon}(M,t)$. 
        
         By \cite[Theorem 1.1]{Xia17}, the hypersurface $M_t^{\varepsilon}$ is strictly star-shaped, strictly $F$-mean convex and exists for all $t>0$. Moreover, Lemma \ref{lem-anisosupp} implies that the anisotropic support function $\chi^{\varepsilon}(p,t)$ of $M_t^{\varepsilon}$ satisfies
		\begin{equation}
			\label{equ-suppest}
			\dfrac{R_1}{2}\mathrm{e}^{\frac{t}{n}} \leq \chi^{\varepsilon}\leq F^{\circ}(X^{\varepsilon})\leq 2R_2 \mathrm{e}^{\frac{t}{n}}.
		\end{equation}
		The Harnack estimate in Theorem \ref{prop-harnack} then yields
		\begin{equation}
			\label{equ-harnackseq}
			\dfrac{1}{H_F^{\varepsilon}(p,t)} \leq C(n,F)\left(\dfrac{R_2}{R_1}\right)^{\frac{3}{2}}\left(1+\frac{1}{t^{\frac{1}{2}}} \right)R_2\mathrm{e}^{\frac{t}{n}}.
		\end{equation}
		Consequently, for every $0<\tau<T<\infty$ there exists a constant $h_{-}(\tau,T)>0$, independent of $\varepsilon$, such that
		\begin{equation}
			\label{equ-hl}
			H_F^{\varepsilon}(p,t)\geq h_-(\tau,T)\ \ \text{on}\ \ M\times [\tau,T].
		\end{equation}
		On the other hand, applying the maximum principle to the evolution equation \eqref{equ-amc} of $H_F$ gives, for any $t>0$,
		\begin{equation}
			\sup_{M_t^\varepsilon}H_F^\varepsilon \leq \sup_{M_0^\varepsilon} H_F^{\varepsilon}  \leq C_0\Lambda.
			\label{equ-h+}
		\end{equation}
		Both estimates \eqref{equ-hl} and \eqref{equ-h+} are uniform  in $\varepsilon$.
		
		 Since $M_t^{\varepsilon}$ remains strictly star-shaped, it can be written as a radial graph:
		\begin{equation*}
			X^{\varepsilon}(z,t)=r^{\varepsilon}(z,t)z, \qquad z\in\mathbb S^n.
		\end{equation*}
		Let $\overline{g}=(\sigma_{ij})$ and $\overline{\nabla}$ be the standard metric and connection on $\mathbb{S}^n$, and set
		\begin{equation*}
			v^{\varepsilon}=\sqrt{1+|\overline{\nabla}\log r^{\varepsilon}|_{\overline{g}}^2}.
		\end{equation*}
		Then
		\begin{equation*}
			\chi^{\varepsilon}=\frac{r^{\varepsilon}}{v^{\varepsilon}F(\nu^{\varepsilon})},\qquad \nu^{\varepsilon}=\dfrac{z-\overline{\nabla}\log r^{\varepsilon}}{v^{\varepsilon}}.
		\end{equation*}
		Moreover,
		\begin{equation}\label{s4.HF}
			H_F^{\varepsilon} = A_{ij}(\nu^{\varepsilon})\dfrac{1}{r^{\varepsilon}v^{\varepsilon}}\left[ \delta_{ij} - \left(\sigma^{ik}-\dfrac{\overline{\nabla}^i r^{\varepsilon}\overline{\nabla}^k r^{\varepsilon}}{(r^{\varepsilon}v^{\varepsilon})^2} \right)\left( \dfrac{\overline{\nabla}_k\overline{\nabla}_j r^{\varepsilon}}{r^{\varepsilon}} - \dfrac{\overline{\nabla}_k r^{\varepsilon} \overline{\nabla}_j r^{\varepsilon}}{(r^{\varepsilon})^2} \right) \right],
		\end{equation}
		where
		\begin{equation*}
			A_{ij}(\nu^{\varepsilon}) = \overline{\nabla}_i\overline{\nabla}_j F(\nu^{\varepsilon})+F(\nu^{\varepsilon})\sigma_{ij}.
		\end{equation*}
		The flow equation \eqref{IAMCF} then becomes
		\begin{equation}
			\label{equ-r}
			\partial_t r^{\varepsilon}=\frac{v^{\varepsilon} F(\nu^{\varepsilon})}{H_F^{\varepsilon}}.
		\end{equation}
		These are formulas derived in \cite[Section 4]{Xia17}.

The scalar equation \eqref{equ-r} for $r^{\varepsilon}$ is a fully nonlinear parabolic equation on $\mathbb{S}^n\times[\tau,T]$. The uniform estimate \eqref{equ-suppest} provides uniform $C^{1}$ bounds for the graph function $r^{\varepsilon}$, and \eqref{equ-hl} - \eqref{equ-h+} provide uniform two-sided bounds for $H_F^{\varepsilon}$. The linearization of \eqref{equ-r} with respect to $\overline\nabla_i\overline\nabla_j r^\varepsilon$ is uniformly parabolic. In local coordinates it is
\begin{align*}
			\dfrac{\partial}{\partial \overline{\nabla}_i\overline{\nabla}_j r^{\varepsilon}}\left(\frac{v^{\varepsilon}F(\nu^{\varepsilon})}{H_F^{\varepsilon}}\right) =& -\dfrac{v^{\varepsilon}F(\nu^{\varepsilon})}{\left(H_F^{\varepsilon}\right)^2}\dfrac{\partial H_F^{\varepsilon}}{\partial \overline{\nabla}_i\overline{\nabla}_j r^{\varepsilon}} \\
			=& \dfrac{F(\nu^{\varepsilon})}{\left(r^{\varepsilon}H_F^{\varepsilon}\right)^2}A_{ik}(\nu^{\varepsilon})\left(\sigma^{kj}-\dfrac{\overline{\nabla}^k r^{\varepsilon}\overline{\nabla}^j r^{\varepsilon}}{(r^{\varepsilon}v^{\varepsilon})^2} \right).
		\end{align*}
While the flow equation \eqref{equ-r} is fully nonlinear, the spatial operator \eqref{s4.HF} defining $H_F^{\varepsilon}$ at a fixed time is quasilinear elliptic. The $C^{2,\alpha}$ estimate in \cite[Pages 119--122]{Xia17} applies on every slab $[\tau,T]$ and its constants depend only on the uniform bounds for $r^{\varepsilon}$, $|\overline{\nabla}r^{\varepsilon}|_{\overline{g}}$ and $H_F^{\varepsilon}$. Parabolic Schauder estimates give the $C^{2+\alpha,1+\alpha/2}$ bound. Differentiating the equation and iterating the Schauder estimates gives, for every $0<\tau<T<\infty$, $k\geq 0$, and $0<\alpha<1$,
\begin{equation}
	\label{equ-high}
	\|r^{\varepsilon}\|_{C^{k+2+\alpha,(k+2+\alpha)/2}(\mathbb{S}^n \times[\tau,T])}\leq C(k,\tau,T,\alpha,n,F,R_1,R_2,\Lambda,M_0)
\end{equation}
uniformly in $\varepsilon$.

		Now the family $\{r^{\varepsilon}\}$ is precompact in $C^\infty_{\mathrm{loc}}(\mathbb S^n\times(0,\infty))$. By the Arzel\`a--Ascoli theorem and a diagonal argument, there exists a subsequence converging smoothly to
		\begin{equation*}
			r:\mathbb S^n\times(0,\infty)\to(0,\infty),
		\end{equation*}
		which solves \eqref{equ-r}. Hence $M_t=\{r(z,t)z:z\in\mathbb S^n\}$ is a smooth solution of IAMCF \eqref{IAMCF} on $(0,\infty)$. In particular, this solution is strictly star-shaped and satisfies the Harnack estimate \eqref{harnackest} by letting $\varepsilon \to 0$ in estimates \eqref{equ-suppest} and \eqref{equ-harnackseq}.

		 To show that $M_t$ converges uniformly to $M_0$ as $t\to 0$, by \eqref{equ-r}, \eqref{equ-high} and \eqref{equ-harnackseq},
		\begin{equation*}
			|\partial_t r^{\varepsilon}|\leq C(1+t^{-1/2})e^{t/n},
		\end{equation*}
		where $C$ is independent of $\varepsilon$. Integrating in time yields
		\begin{equation}
			\label{equ-rinfty}
			\|r^{\varepsilon}(\cdot,t)-r^{\varepsilon}(\cdot,0)\|_{L^\infty(\mathbb{S}^n)}\leq \int_0^t |\partial_s r^{\varepsilon}|\mathrm{d}s \leq C(t+t^{1/2})
		\end{equation}
		for $t<1$. For every fixed $t>0$, the subsequential convergence gives $r^{\varepsilon}(\cdot,t)\to r(\cdot,t)$ uniformly on $\mathbb S^n$. At the initial time, Lemma~\ref{lem-approximate} gives $r^{\varepsilon}(\cdot,0)\to r_0$ uniformly. Letting $\varepsilon\to 0$ in \eqref{equ-rinfty} gives
		\begin{equation*}
			\|r(\cdot,t)-r_0\|_{L^\infty(\mathbb S^n)}\to 0 \qquad\text{as}\ t \downarrow 0.
		\end{equation*}
		Therefore $M_t\to M_0$ uniformly as $t\downarrow 0$. This completes the proof.
        \end{proof}

We conclude this section with the following proposition, which says that the smooth solution $M_t$ obtained in Theorem \ref{higher-our} is also a weak solution starting from $M_0$. 
\begin{proposition}
	\label{prop:smooth-continuation-weak}
	Let $\{M_t\}_{t>0}$ be the smooth solution obtained in Theorem \ref{higher-our}, and let $E_t$ be the bounded region enclosed by $M_t$. Define the arrival-time function $u$ by $u=t$ on $M_t$, and extend it inside $E_0$ so that $u<0$ there. Then $\{E_t\}_{t>0}$ is a weak solution of IAMCF starting from $E_0$ in the sense of Definition \ref{def-minimizes}.
\end{proposition}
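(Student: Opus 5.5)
The plan is to verify the minimizing property of Definition \ref{def-minimizes} for each $E_t$, in the domain $U=\mathbb{R}^{n+1}\setminus\overline{E_0}$. I first use Lemma \ref{lem-smweak} on slabs away from $t=0$, and then pass to the limit $c\downarrow0$ using the $C^0$ convergence $M_c\to M_0$ from Theorem \ref{higher-our}.

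\textbf{Step 1 (foliation and regularity of $u$).} For $t>0$ the hypersurfaces $M_t$ are smooth, strictly star-shaped and satisfy $H_F>0$. By \eqref{equ-r}, $\partial_t r>0$, so they foliate $\mathbb{R}^{n+1}\setminus\overline{E_0}$ monotonically. The monotonicity and the $C^0$ convergence show that $\bigcup_{t>0}M_t$ is exactly this exterior region. Hence $u$ is well defined and smooth there, with $F(Du)=H_F$. The Harnack estimate \eqref{harnackest} together with \eqref{equ-h+} gives
\[
0<H_F\leq C_0\Lambda .
\]
Therefore $u$ is locally Lipschitz up to $M_0$: the spatial gradient is bounded by $C_0\Lambda$, which is uniform as $t\downarrow0$. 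In addition $u\to\infty$ at infinity, because $\chi\geq R_1e^{t/n}$ by Lemma \ref{lem-anisosupp}. Inside $E_0$ I take any Lipschitz negative extension, for instance $u=-\operatorname{dist}(\cdot,M_0)$. This inner extension does not matter, since competitors are only compared in $U$.

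\textbf{Step 2 (slab minimization).} Fix $t>0$ and a competitor $G$ with $E_t\triangle G\Subset U$. Choose $0<c<t<d$ such that $E_t\triangle G\Subset E_d\setminus\overline{E_c}$. This is possible because $E_t\triangle G$ is compact in $U$, so it stays a positive distance from $M_0$, and $M_c\to M_0$ uniformly. Then Lemma \ref{lem-smweak} applies on the slab $[c,d)$ and yields $J^K_{F,u}(E_t)\leq J^K_{F,u}(G)$. The function $u$ used in that lemma agrees with ours on $E_d\setminus\overline{E_c}$, and only values of $F(Du)$ on $K\subset E_d\setminus\overline{E_c}$ enter the functional. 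So the inequality holds for our $u$.

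\textbf{Step 3 (the case $t=0$ and the main obstacle).} One must also check that $E_0$ itself minimizes, or equivalently that $E_t$ minimizes for competitors approaching $M_0$; this is where the initial time enters. For $t>0$ and any compact difference set, the argument above already suffices. What remains is the requirement $\Omega=\{u<0\}$, which holds by construction, together with the statement for $t=0$.

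For $t=0$, I approximate: $E_s\to E_0$ as $s\downarrow0$. For a competitor $G\supset E_0$ with $G\setminus E_0\Subset U$, I compare $E_s$ with $G\cup E_s$ and let $s\to0$. This uses the perimeter convergence
\[
P_F(E_s)\to |M_0|_F .
\]
This convergence is the delicate step, because Theorem \ref{higher-our} only provides $C^0$ convergence. I would obtain it from the $C^1$ convergence in Lemma \ref{lem-approximate}(a) together with lower semicontinuity. Alternatively, the definition may require minimization only for $t>0$, in which case Step 2 completes the proof.
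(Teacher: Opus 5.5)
Your Steps 1--2 are essentially the paper's proof. The bound $H_F\le C_0\Lambda$ makes $u$ locally Lipschitz, the support estimate makes the $E_s$ exhaust space, the $C^0$ convergence $M_\delta\to M_0$ places the compact difference set inside a slab $E_d\setminus\overline{E_c}$ with $c>0$, and Lemma \ref{lem-smweak} concludes. Step 3 is unnecessary, because the paper's weak formulation only requires $E_t$ to minimize for $t>0$; moreover, if you did want $t=0$, the needed upper bound $\limsup_{s\downarrow0}P_F(E_s)\le|M_0|_F$ would come from $|M_s|_F=e^s|M_0|_F$ (area growth along the approximating flows plus Lemma \ref{lem-approximate}(a)), not from lower semicontinuity.
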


\begin{proof}
	The curvature upper bound \eqref{equ-h+} passes to the limit and gives $H_F\leq C_0\Lambda$ along the smooth flow. Since a smooth IAMCF written as a level-set flow satisfies $F(Du)=H_F$ on its regular level sets, the function $u$ is locally Lipschitz in $\mathbb{R}^{n+1}\setminus E_0$.

	Fix $t>0$, let $G$ be a competitor with $E_t\triangle G\Subset\mathbb{R}^{n+1}\setminus E_0$, and choose a compact set $K\Subset\mathbb{R}^{n+1}\setminus E_0$ containing $E_t\triangle G$. The support estimate \eqref{equ-suppest} passes to the limiting flow and gives
	\begin{equation*}
		F^\circ(X)\geq\chi\geq \frac{R_1}{2}e^{s/n}
		\qquad\text{on }M_s.
	\end{equation*}
	Since the hypersurfaces are radial graphs, this implies
	\begin{equation*}
		\mathcal W_{(R_1/2)e^{s/n}}(0)\subset E_s.
	\end{equation*}
	Thus the regions $E_s$ exhaust $\mathbb R^{n+1}$, and we may choose $T>t$ so large that $K\Subset E_T$. On the other hand, the uniform convergence $E_\delta\to E_0$ as $\delta\downarrow0$ allows us to choose $0<\delta<t$ with $K\Subset\mathbb R^{n+1}\setminus\overline{E_\delta}$. Hence
	\begin{equation*}
		K\Subset E_T\setminus\overline{E_\delta}.
	\end{equation*}
	The family $\{M_s\}_{\delta\leq s<T}$ is smooth and has positive anisotropic mean curvature. Lemma \ref{lem-smweak} therefore applies in the entire domain containing the variation and gives
	\begin{equation*}
		J_{F,u}^K(E_t)\leq J_{F,u}^K(G).
	\end{equation*}
	This is the weak minimizing property outside $E_0$.
\end{proof}

	\section{Higher regularity of the weak IAMCF}
	\label{sec5}
	In this section, we prove Theorem \ref{thm-main}. We first need to show that there exists a sufficiently large time $t_0$ such that the weak solution $M_{t_0}$ is $C^1$ and strictly star-shaped. This relies on compactness for anisotropic almost minimizers \cite{DPM15}, the interior small-excess regularity theorem of Duzaar--Steffen \cite{DS02}, and the asymptotically Wulff shape result in Theorem~\ref{thm:anisotropic-asymptotic}. In the isotropic theory of Huisken--Ilmanen \cite{HI08}, the corresponding large-time regularity follows from the special asymptotic regularity of the Euclidean weak IMCF. In the anisotropic setting, the available asymptotic behavior in Theorem~\ref{thm:anisotropic-asymptotic} gives only Hausdorff convergence to an expanding Wulff shape. We therefore recover the required $C^{1,\beta}$ regularity by using the variational almost-minimality of the level sets together with anisotropic small-excess regularity.

\begin{proposition}
\label{prop:C1-blowdown}
Let $u$ be the proper weak solution of \eqref{eq:WIAMCF} starting from a
bounded open set $\Omega\subset\mathbb R^{n+1}$ with smooth boundary.
Set $E_t=\{u<t\}$ and $M_t=\partial E_t$. Let
\begin{equation*}
 \gamma=\log\left(\frac{|\partial\mathcal W|_F}{|\partial\Omega^*|_F}\right),
 \qquad
 \rho_\infty=\exp\left(-\frac{\gamma}{n}\right),
\end{equation*}
where $\Omega^*$ is the strictly outward $F$-minimizing hull of $\Omega$.
Then there exists $\alpha\in(0,1)$ such that, for every $0<\beta<\alpha$,
\begin{equation}
\label{eq:C1-blowdown}
 e^{-\frac{t}{n}}M_t
 \longrightarrow \rho_\infty\partial\mathcal W
 \qquad\text{in }C^{1,\beta}
\end{equation}
as $t\to\infty$. In particular, for all sufficiently large $t$, the
level set $M_t$ has no singular points, is of class $C^{1,\beta}$, and
is strictly star-shaped.
\end{proposition}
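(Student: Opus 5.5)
We will work with the rescaled sets $\tilde E_t:=e^{-t/n}E_t$ and $\tilde M_t=\partial\tilde E_t$, and split the argument into three parts: Hausdorff convergence from Theorem~\ref{thm:anisotropic-asymptotic}, uniform almost-minimality of the rescaled level sets, and small-excess regularity to upgrade to $C^{1,\beta}$.

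\textbf{Step 1 (Hausdorff convergence).} Theorem~\ref{thm:anisotropic-asymptotic} gives $u(x)=n\log F^\circ(x)+\gamma+o(1)$ as $F^\circ(x)\to\infty$. Hence on $M_t$ (where $u=t$, and $F^\circ\to\infty$ as $t\to\infty$ because $u$ is proper and locally Lipschitz) we get $F^\circ(x)=e^{(t-\gamma)/n}e^{o(1)}$. So $\tilde M_t$ lies in the annulus $\{|F^\circ-\rho_\infty|\le o(1)\}$, and $\tilde E_t$ converges to $\rho_\infty\mathcal W$ in Hausdorff distance. Since the sets are sandwiched between Wulff shapes, they also converge in $L^1$.

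\textbf{Step 2 (uniform almost-minimality).} Since $E_t$ minimizes $J_{F,u}$ in $\mathbb R^{n+1}\setminus\Omega$, for any competitor $G$ with $G\triangle E_t\subset B_r(x)$ we have
\[
P_F(E_t;B_r(x))\le P_F(G;B_r(x))+\int_{G\triangle E_t}F(Du)\,dx .
\]
Lemma~\ref{lem:weak-gradient} gives $F(Du)\le C/F^\circ$, which is $\le Ce^{-t/n}$ near $M_t$ by Step 1. After rescaling by $e^{-t/n}$, the error becomes $\le C' r^{n+1}$ with $C'$ independent of $t$. Thus the $\tilde E_t$ are uniformly $(\Lambda,r_0)$-almost minimizers of the anisotropic perimeter $P_F$ in a fixed neighborhood $U$ of $\rho_\infty\partial\mathcal W$, with constants independent of $t$. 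Because the limit $\rho_\infty\mathcal W$ also satisfies an almost-minimality condition for $P_F$ (its anisotropic mean curvature is the constant $n/\rho_\infty$), the compactness theorem for anisotropic almost minimizers \cite{DPM15} upgrades the $L^1$ convergence to convergence of the anisotropic perimeter measures. In particular the boundaries converge in the varifold sense, the excess tends to zero, and jumps of $E_t$ (where $M_t\neq\partial\{u\le t\}$) are excluded at large times. We do this using the outer/inner minimizing properties, so that both $\partial\{u<t\}$ and $\partial\{u\le t\}$ converge to the same limit.

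\textbf{Step 3 (small-excess regularity).} We then apply the interior $\varepsilon$-regularity theorem of Duzaar--Steffen \cite{DS02} for almost minimizers of elliptic integrands. This requires controlling the excess of $\tilde M_t$ in each ball $B_r(x)$, $x\in\rho_\infty\partial\mathcal W$. The excess tends to zero because the limit is smooth and the perimeter measures converge. The theorem then gives a uniform $C^{1,\alpha}$ graph representation over the tangent planes of the limit, with graph functions converging to zero in $C^1$. Interpolating with the uniform $C^{1,\alpha}$ bound yields convergence in $C^{1,\beta}$ for every $\beta<\alpha$, which is \eqref{eq:C1-blowdown}; in particular there are no singular points.

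Strict star-shapedness follows at once. The anisotropic support function is continuous under $C^1$ convergence, and on $\rho_\infty\partial\mathcal W$ we have $\chi=\rho_\infty>0$. Hence for large $t$ we get $\chi\ge\rho_\infty/2$ on $\tilde M_t$, and by scaling $\chi\ge \tfrac{\rho_\infty}{2}e^{t/n}$ on $M_t$.

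\textbf{Main obstacle.} We expect the difficulty to be Step 2/3, namely passing from Hausdorff convergence to convergence of the perimeter measures with small excess, uniformly over the whole level set. Hausdorff closeness alone does not exclude thin fingers or multiplicity; excluding them needs the density bounds of almost minimizers together with the minimizing property on both sides. Our first attempt is to compare $P_F(\tilde E_t)$ with $P_F(\rho_\infty\mathcal W)$ by testing against the Wulff shape $\rho_\infty(1\pm\delta)\mathcal W$. Using the exact growth $|M_t|_F=e^t|\partial\Omega^*|_F$ of the weak flow, the identity $|\tilde M_t|_F=|\partial\Omega^*|_F=e^{-\gamma}|\partial\mathcal W|_F=\rho_\infty^n|\partial\mathcal W|_F$ then gives convergence of the total anisotropic mass. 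This mass convergence is exactly what \cite{DPM15} needs to force the convergence of the perimeter measures.
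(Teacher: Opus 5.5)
Your proposal is correct and follows essentially the same route as the paper. That route is: Hausdorff and $L^1$ convergence of the rescaled level sets from the asymptotic expansion; uniform $(\Lambda,r_0)$-almost-minimality from the scaled gradient bound $F(Du)\leq C/F^\circ$; the compactness theorem of \cite{DPM15}, giving convergence of the perimeter measures with multiplicity one; the $\varepsilon$-regularity theorem of \cite{DS02} in finitely many small balls; interpolation; and continuity of the anisotropic support function.

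The differences are inessential:
\begin{itemize}
\item The area-growth identity $|M_t|_F=e^t|\partial\Omega^*|_F$ is not needed, because the compactness theorem already gives convergence of the total variations.
\item The excess in a fixed ball does not tend to zero. It converges to the small but nonzero excess of $\rho_\infty\partial\mathcal W$, so the radii must be chosen small, as the paper does.
\item Your aside about excluding jumps is not required for the statement, and it does not follow from the argument as written.
\end{itemize}
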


\begin{proof}
Up to a translation, we may assume $0\in\Omega$. By
Theorem~\ref{thm:anisotropic-asymptotic}, we have
\begin{equation}
\label{eq:u-asymptotic-C1}
 u(x)=n\log F^\circ(x)+\gamma+o(1)
 \qquad\text{as }F^\circ(x)\longrightarrow\infty.
\end{equation}
By Lemma~\ref{lem:weak-gradient}, there exists a constant
$C=C(n,F,\Omega)>0$ such that
\begin{equation}
\label{eq:scale-gradient-C1}
 F(Du)(x)\leq \frac{C}{F^\circ(x)}
\end{equation}
for a.e. $x\in\mathbb R^{n+1}\setminus\Omega$.

For $s\geq1$, define
\begin{equation*}
 u_s(x):=u(sx)-n\log s,
 \qquad
 E_s:=\{u_s<0\}=s^{-1}E_{n\log s}.
\end{equation*}
It follows from \eqref{eq:u-asymptotic-C1} that
\begin{equation}
\label{eq:us-convergence}
 u_s\longrightarrow v:=n\log F^\circ+\gamma
 \qquad\text{locally uniformly in }
 \mathbb{R}^{n+1}\setminus\{0\}.
\end{equation}
Consequently,
\begin{equation}
\label{eq:set-convergence}
 \mathbf{1}_{E_s}\longrightarrow\mathbf{1}_{E_\infty}
 \quad\text{in }L^1_{\mathrm{loc}}
 (\mathbb{R}^{n+1}\setminus\{0\}),
 \qquad
 E_\infty=\{F^\circ<\rho_\infty\}.
\end{equation}
Since $Dv$ does not vanish on $\partial E_\infty$, the local uniform
convergence in \eqref{eq:us-convergence} also gives
\begin{equation}
\label{eq:hausdorff-convergence}
 \partial E_s\longrightarrow
 \partial E_\infty=\rho_\infty\partial\mathcal W
 \qquad\text{locally in the Hausdorff distance}.
\end{equation}
We now prove the $C^{1,\beta}$ convergence. Fix annuli
$A'\Subset A\Subset\mathbb{R}^{n+1}\setminus\{0\}$ with
$\rho_\infty\partial\mathcal W\Subset A'$. By
\eqref{eq:hausdorff-convergence}, $\partial E_s\cap A'$ lies in $A$ for all
large $s$. Also $A\subset\mathbb{R}^{n+1}\setminus s^{-1}\overline{\Omega}$
for all large $s$. The scaling invariance of the weak formulation shows that
$E_s$ minimizes $J_{F,u_s}$ with respect to compactly supported variations in
$A$.

The gradient bound scales in a simple way. By \eqref{eq:scale-gradient-C1},
\begin{equation*}
 F(Du_s)(x)=sF(Du)(sx)\leq \frac{C}{F^\circ(x)}.
\end{equation*}
Thus $\sup_A F(Du_s)\leq\Lambda_A$ with $\Lambda_A$ independent of $s$.
If $E_s\triangle G\Subset B_r(x)\Subset A$, the minimizing property gives
\begin{equation}
\label{eq:almost-minimality-Es}
 P_F(E_s;B_r(x))
 \leq P_F(G;B_r(x))+\Lambda_A|E_s\triangle G|.
\end{equation}
Hence the normalized representatives of $E_s$ are uniform
$(\Lambda_A,r_A)$-minimizers of the autonomous elliptic integrand
$\Phi(\nu)=F(\nu)$ on $A'$, for some $r_A>0$ independent of $s$. This is in
the sense of \cite[Definition~1.8 and Lemma~2.16]{DPM15}.

Let $s_j\to\infty$. By the compactness theorem
\cite[Theorem~2.9]{DPM15}, a subsequence has convergent oriented perimeter
measures. Their total variations also converge. The $L^1_{\mathrm{loc}}$
convergence in \eqref{eq:set-convergence} identifies the limit set as
$E_\infty$. Since $\partial E_\infty=\rho_\infty\partial\mathcal W$ is a
smooth hypersurface, the limit measure is the multiplicity-one perimeter
measure of $E_\infty$. Thus there is no loss of mass and no extra sheet in
the limit.

We choose finitely many balls $B_{2r_a}(x_a)\Subset A'$ centered on
$\partial E_\infty$. The radii are chosen small. First, the excess of the
smooth hypersurface $\partial E_\infty$ in each ball is below the small
constant in the regularity theorem. Second, $\Lambda_A r_a$ is below the
same threshold. By \cite[Remark~3.6]{DPM15} and the Hausdorff convergence
\eqref{eq:hausdorff-convergence}, the excess of $\partial E_{s_j}$ in
$B_{r_a}(x_a)$ is also small for all large $j$. After rescaling, the
almost-minimizing modulus in \eqref{eq:almost-minimality-Es} is bounded by
$C\Lambda_A r_a$, as in \cite[(1.16)--(1.17)]{DS02}. The interior
$\varepsilon$-regularity theorem \cite[Theorem~6.1]{DS02} then gives a
local graph representation of $\partial E_{s_j}$ in each ball. The graph
norms are bounded uniformly in $C^{1,\alpha}$ for some $\alpha\in(0,1)$.

The Hausdorff convergence fixes the sheet of each local graph. It also shows
that these graphs cover the whole boundary for large $j$. On overlaps the
graphs describe the same reduced boundary, so they patch to a normal graph
over $\rho_\infty\partial\mathcal W$. The graph function tends to zero in
$C^0$. The uniform $C^{1,\alpha}$ bound then gives convergence to zero in
$C^{1,\beta}$ for every $0<\beta<\alpha$. Since the sequence $s_j$ was
arbitrary and the limit is unique, the whole family converges. Thus
$\partial E_s$ has no singular points for all sufficiently large $s$.
Taking $s=e^{t/n}$ proves \eqref{eq:C1-blowdown}.

It remains to verify strict star-shapedness. The anisotropic support
function of the limiting Wulff shape is constant:
\begin{equation*}
 \frac{\langle X,\nu\rangle}{F(\nu)}=\rho_\infty
 \qquad\text{on }\rho_\infty\partial\mathcal W.
\end{equation*}
The anisotropic support function depends continuously on the position
and Euclidean unit normal in the $C^1$ topology. Hence
\begin{equation*}
 \frac{\langle \widetilde X,\widetilde\nu\rangle}
 {F(\widetilde\nu)}\geq\frac{\rho_\infty}{2}>0
 \qquad\text{on }e^{-t/n}M_t
\end{equation*}
for all sufficiently large $t$. Scaling back yields
\begin{equation*}
 \chi=\frac{\langle X,\nu\rangle}{F(\nu)}
 \geq\frac{\rho_\infty}{2}e^{t/n}>0
 \qquad\text{on }M_t.
\end{equation*}
This proves the final assertion.
\end{proof}

\begin{proof}[Proof of Theorem \ref{thm-main}]
After translating the coordinates, we may assume $0\in\Omega$. By
Proposition~\ref{prop:C1-blowdown}, there is a time $T_*>0$ such that
$M_t$ is a $C^{1,\beta}$ strictly star-shaped hypersurface for every
$t\geq T_*$. Let $\mathcal S$ be the full-measure set of times for which
\eqref{lem-level} holds. Choose $t_0\in \mathcal S\cap(T_*,\infty)$.
Then $M_{t_0}$ is $C^{1,\beta}$ and strictly star-shaped. Moreover, by
\eqref{lem-level}, its weak anisotropic mean curvature satisfies
\begin{equation*}
 H_F=F(Du)
 \qquad\mathcal H^n\text{-a.e. on }M_{t_0}.
\end{equation*}
Since $M_{t_0}$ is compact and lies outside the bounded set $\Omega$,
\begin{equation*}
 m_{t_0}:=\inf_{x\in M_{t_0}}F^\circ(x)>0.
\end{equation*}
Hence the gradient estimate \eqref{eq:weak-gradient} gives
\begin{equation*}
 0\leq H_F\leq C m_{t_0}^{-1}
 \qquad\mathcal H^n\text{-a.e. on }M_{t_0}.
\end{equation*}
Thus $M_{t_0}$ satisfies all hypotheses of Theorem~\ref{higher-our}.

Use $M_{t_0}$ as initial data in Theorem \ref{higher-our}. After resetting the time variable, we obtain a smooth solution $\{\widetilde M_s\}_{s>0}$ with strictly positive anisotropic mean curvature. Let $\widetilde E_s$ be the enclosed regions. Define the shifted family
	\begin{equation*}
	 \widehat E_{t_0+s}:=\widetilde E_s,
	 \qquad
	 \widehat M_{t_0+s}:=\widetilde M_s.
	\end{equation*}
	By Proposition \ref{prop:smooth-continuation-weak}, $\{\widehat E_{t_0+s}\}_{s>0}$ is a weak solution starting from $E_{t_0}$ in $\mathbb R^{n+1}\setminus E_{t_0}$. The original shifted family $\{E_{t_0+s}\}_{s>0}$ is also a weak solution in the same region. The level sets are compact, so Lemma \ref{lem-weakcom} gives uniqueness. Hence
	\begin{equation*}
	 \widehat M_t=M_t
	 \qquad\text{for all }t>t_0.
	\end{equation*}
	Therefore the original weak solution is smooth for all $t>t_0$. Taking $K=\overline{E_{t_0}}$ finishes the proof. 
\end{proof}

\section*{Acknowledgements}
This work was supported by the National Key Research and Development Program of China (2021YFA1001800), the National Natural Science Foundation of China (12531002), and the Fundamental Research Funds for the Central Universities. The third author was also supported by the China Postdoctoral Science Foundation (2025M783146).

\end{document}